\tikzset{style green/.style={
    set fill color=green!50!lime!60,
    set border color=white,
  },
  style cyan/.style={
    set fill color=cyan!90!blue!60,
    set border color=white,
  },
  style orange/.style={
    set fill color=orange!80!red!60,
    set border color=white,
  },
  hor/.style={
    above left offset={-0.15,0.31},
    below right offset={0.15,-0.125},
    #1
  },
  ver/.style={
    above left offset={-0.1,0.3},
    below right offset={0.15,-0.15},
    #1
  }
}
\def\beq{\begin{equation}}
  \def\eeq{\end{equation}}
  \def\beqn{\begin{eqnarray*}}
  \def\eeqn{\end{eqnarray*}}
  \def\bitem{\begin{itemize}}
  \def\eitem{\end{itemize}}
  \def\benum{\begin{enumerate}}
  \def\eenum{\end{enumerate}}
  \def\bmult{\begin{multline*}}
  \def\emult{\end{multline*}}
  \def\bcenter{\begin{center}}
  \def\ecenter{\end{center}}
\newcolumntype{P}[1]{>{\centering\arraybackslash}p{#1}}
\DeclareMathOperator*{\argmax}{arg\, max}
\newcommand{\argmin}{\mathop{\mathrm{arg\,min}}}
\def\cF{\mathcal{F}}
\def\cG{\mathcal{G}}
\def\cH{\mathcal{H}}
\def\cP{\mathcal{P}}
\def\1{{\mathbf 1}}
\def\0{{\mathbf 0}}
\newcommand\bPi{{\boldsymbol\Pi}}
\def\bbC{\mathbb{C}}
\def\bbR{\mathbb{R}}
\DeclareMathAlphabet\mathbfcal{OMS}{cmsy}{b}{n}
\newcommand{\E}{\operatorname{\mathbb{E}}}
\newcommand{\floor}[1]{\left\lfloor#1\right\rfloor}
\def\proscal<#1,#2>{\langle #1,#2\rangle}
\newcommand{\poubelle}[1]{}
 \newcommand{\algoPrincipal}{\mathbf{ISR}}
 \newcommand{\iso}{\mathrm{iso}}
 \newcommand{\reco}{\mathrm{reco}}
 \newcommand{\perm}{\mathrm{perm}}
 \def\1{{\mathbf 1}}
 \def\0{{\mathbf 0}}
 \def\bbC{\mathbb{C}}
 \def\bbR{\mathbb{R}}
 \def\GS{\mathfrak{S}}
 \DeclareMathAlphabet\mathbfcal{OMS}{cmsy}{b}{n}
\begin{document}

\newcommand\relatedversion{}
\renewcommand\relatedversion{\thanks{A shorter version of this review paper is to be published in the proceedings of the ICM 2026.}} 

\title{\Large Statistical and computational challenges in ranking\relatedversion
}
\author{
Alexandra Carpentier\thanks{Institüt für Mathematik, Universität Potsdam, Potsdam, Germany}
\and Nicolas Verzelen\thanks{UMR MISTEA, INRAE, Univ Montpellier, Institut Agro, 34060 Montpellier, France.}
}

\date{}

\maketitle
\begin{abstract} \small\baselineskip=9pt 
We consider the problem of ranking $n$ experts according to their abilities, based on the correctness of their answers to $d$ questions. This is modeled by the so-called crowd-sourcing model, where the answer of expert $i$ on question $k$ is modeled by a random entry, parametrized by $M_{i,k}$ which is increasing linearly with the expected quality of the answer. To enable the unambiguous ranking of the experts by ability, several assumptions on $M$ are available in the literature. We consider here the general isotonic crowd-sourcing model, where $M$ is assumed to be isotonic up to an unknown permutation $\pi^*$ of the experts - namely, $M_{\pi^{*-1}(i),k} \geq M_{\pi^{*-1}(i+1),k}$ for any $i\in [n-1], k \in [d]$. Then, ranking experts amounts to constructing an estimator of $\pi^*$. 
In particular, we investigate here the existence of statistically optimal and computationally efficient procedures and we describe recent results that  disprove the existence of computational-statistical gaps for this problem. 
To provide insights on the key ideas,  we start by discussing 
simpler and yet related sub-problems, namely sub-matrix detection and estimation. This corresponds to specific instances of the ranking problem where the matrix $M$ is constrained to be of the form $\lambda \mathbf 1\{S\times T\}$ where $S\subset [n], T\subset [d]$. This model has been extensively studied. We provide an overview of the results and proof techniques for this problem with a particular emphasis on the computational lower bounds based on low-degree polynomial methods. Then, we build upon this instrumental sub-problem to discuss existing results and algorithmic ideas for the general ranking problem.

\end{abstract}

\section{Introduction}

  \subsection{Ranking problems.} 
Ranking problems have received a lot of attention in the statistical, machine learning, and computer science literature. This includes a variety of practical situations ranging from ranking experts/workers in crowd-sourced data, ranking players in a tournament or equivalently sorting objects based on pairwise comparisons. To fix ideas, let us consider a problem where we have $n$ experts (or workers) and $d$ questions (or tasks). Given an expert $i\in [n]$ and a question $k\in [d]$, the random variables $Y_{i,k}$ stands for the validity of the response of expert $i$ to question $k$. More specifically, we have $Y_{i,k}=1$ if expert $i$ answers correctly to question $k$ and $Y_{i,k}=-1$ otherwise. It is standard\footnote{There exist other parametrizations but those turn out to be equivalent.} to assume that the $Y_{i,k}$'s are independent and that there exists there exists an unknown matrix $M\in [0,1]^{n\times d}$ such that 
$$ \mathbb{P}_{M}[Y_{i,k} =1]= \frac{1+M_{i,k}}{2} \quad \quad \quad \quad   \mathbb{P}_{M}[Y_{i,k} =-1]= \frac{1-M_{i,k}}{2}\enspace , $$
so that $\mathbb{E}_{M}[Y_{i,k}]=1$.   Here, the matrix $M$ encodes the average performance of each expert on each question with $M_{i,k}=0$ when the expert does not answer the question better than random guess and $M_{i,k}=1$ the expert's knowledge is perfect on that question. Depending on the problem at hand, the statistical may fully observe the matrix $Y$ or observe a subset of its entries (partial observations). For tournament problems, we have $n=d$ players (or objects) and $(1+M_{i,k})/2$ stands for the probability that player $i$ wins against player $k$. Based on these noisy data, the general goal is to provide, based on the matrix $Y$, a full ranking of the experts or of the players. 
  
\subsubsection{Literature review.} Originally, these problems have been tackled using parametric model for the matrix $M$. Notably for tournament problem, this includes the noisy sorting model \cite{braverman2008noisy} or Bradley-Luce-Terry model \cite{bradley1952rank}. Still, it has been observed that these simple models are often unrealistic and do not tend to fit data well.   This has spurred a recent stream  of literature where strong parametric assumptions are replaced by non-parametric shape-constrained assumptions \cite{shah2015estimation,shah2016stochastically,shah2019feeling,shah2020permutation, mao2020towards,mao2018breaking,liu2020better,flammarion2019optimal,bengs2021preference,saad2023active, pilliat2022optimal, pilliat2024optimal, graf2024optimal}. In what follows, a bounded matrix  $A\in [0,1]^{n\times d}$ is said to be isotonic if all its columns are non-increasing, that is $A_{i,k}\geq A_{i+1,k}$ for any $i\in [n-1]$ and $k \in [d]$. Besides, a matrix $A$ is said to be bi-isotonic if both its rows and its columns are non-increasing. The most emblematic shape-constrained non-parametric models for ranking can be divided into three classes.
  \begin{itemize}
  \item For tournament problems, the strong stochastically transitive (SST) model presumes that the square matrix $M$ is, up to a common permutation $\pi^*$ of the rows and of the columns, bi-isotonic and satisfies the skew symmetry condition $M_{i,k}+M_{k,i}=1$. Optimal rates for estimation of the permutation $\pi^*$ have been pinpointed in the earlier paper of Shah et al.~\cite{shah2016stochastically}. However at that time there remained a large gap between these optimal rates and the best known performances of polynomial-time algorithms. Although this has been partially filled over the year, this has led to  conjecture the existence of a \emph{statistical-computational gap}~\cite{mao2020towards, liu2020better} - namely, that no ranking algorithms was at the same time statistically optimal, and computationally efficient, i.e.~polynomial time. See Subsection~\ref{ss:compLB}, and also later in this paper, for a more precise explanation. 
  \item For crowdsourcing data, the counterpart of the SST model is the so-called bi-isotonic model, where the rectangular matrix $M$ is bi-isotonic, up to an unknown permutation $\pi^*$ of its rows and an unknown permutation $\eta^*$ of its columns. 
   From a modeling perspective, this  presumes the existence of two unambiguously orderings $\pi^*$ and $\eta^*$ of the experts --from the best-one to the wort-one-- and of the questions -- from the simplest one to the most difficult one.  This model turns out to be really similar to the SST model and the existence of a statistical-computational gap has also been conjectured~\cite{mao2020towards}. See however~\cite{liu2020better, pilliat2022optimal,graf2024optimal} for two slight variations on this bi-isotonic setting where this conjecture has been disproved, and where polynomial-time and statistically optimal algorithms have been provided.
  \item Finally and again for crowdsourcing data, another relevant model is the so-called isotonic model, where the rectangular matrix $M$ is (only) isotonic, up to an unknown permutation $\pi^*$ of its rows~\cite{flammarion2019optimal}. In this case, there is no structure in $M$ along its columns, namely according to the questions, so that the isotonic model more flexible than the bi-isotonic and SST models. It is in fact the most general model under which an unambiguous ranking of the experts is well-defined. In this model as well, there was originally a gap between the (statistical) optimal rates, and the rate obtained by the (polynomial-time) algorithm in~\cite{flammarion2019optimal} so that again,  a statistical-computational gap has been widely conjectured. Interestingly, this conjecture was proven false in this model~\cite{pilliat2024optimal}, and a polynomial-time and statistically optimal algorithms has been provided, thereby settling the absence of any computational gap in this model. On a side note, there exist variants of these models~\cite{shah2020permutation}, coined as crowd-labeling, where the learner does not know the ground truth answer to the questions so that $Y$ is only observed up to multiplications by $\{-1,1\}$ of the columns. This corresponds to situations where the objective is both to rank the experts and to learn the answers. Despite their importance, we leave them aside in this survey. 
  \end{itemize}
  One may object that such shape-constraint conditions are quite strong and, despite their generality, may not be realistic in practical situations where e.g. different experts have different fields of expertise. In fact, there is an important literature on weakening such assumptions --see~\cite{bengs2021preference}. This is however made at the price of the ambiguity for the ordering of the experts. In this survey, we focus on these permuted shape-constrained for their mathematical elegance as well as the deep statistical and computational questions.

  \subsubsection{Isotonic model for crowdsourcing.}
  Although we will discuss results for SST and bi-isotonic models, we shall put further emphasis on the isotonic model for crowdsourcing. Let us introduce additional notation to formalize the problem.  Henceforth, we write $\mathbb{C}_{\iso}$ for the collection of all $n\times d$  isotonic matrices taking values in $[0,1]$ that is, the collection of matrices whose columns are non-increasing. We assume that there exists a permutation $\pi^*$ of $[n]$ such that the matrix $M_{\pi^{*-1}}$ defined by $(M_{\pi^{*-1}})_{i,k}= (M_{\pi^{*-1}(i),k})$ has non-increasing columns, that is $M_{\pi^{*-1}} \in \bbC_{\iso}$. Henceforth, $\pi^*$ is referred as an oracle permutation.  We define 
  \begin{equation}\label{eq:isotonic_matrix_up_to_permutation}
\overline{\bbC}_{\iso}:= \{M: \exists \pi^{*}\in\bPi_n: M_{\pi^{*-1}}\in \bbC_{\iso}\}\enspace ,
\end{equation}
for the class of permuted isotonic matrices. Here, $\bPi_n$ stands for the group of all permutations. 
Using the terminology of crowdsourcing, we refer to  $i^{\mathrm{th}}$ row of $M$ as \emph{expert} $i$ and to $k^{\mathrm{th}}$ column as  \emph{question} $k$.

We are primarily interested in estimating an oracle permutation $\pi^*$ based on the observed matrix $Y$. However, $\pi^*$ is neither unique nor identifiable if we only assume that $M\in \overline{\bbC}_{\iso}$. One may put additional constrains on the separation of the rows of $M$ to make $\pi^*$ identifiable and then directly estimate $\pi^*$ with respect to some distance on the permutation set --see e.g~\cite{mao2018minimax}. Here, we take a different route. Given an estimator $\hat{\pi}$, we use  the square Frobenius norm loss defined by  
\begin{align}\label{eq:estpermM}
\|M_{\hat{\pi}^{-1}} - M_{\pi^{*-1}} \|_F^2\enspace .
\end{align}
to quantify the \emph{ranking} error. 
It quantifies the distance between the matrix $M$ reordered according to the estimated permutation $\hat{\pi}$ and the matrix $M$ sorted according to the oracle permutation $\pi^*$.  Importantly, if the oracle permutation $\pi^*$ is not unique, the value of the loss~\eqref{eq:estpermM} does not depend on its choice and this loss is therefore well defined. This loss is explicitly used in~\cite{liu2020better,pilliat2022optimal} and is implicit in earlier works --see e.g.~\cite{shah2016stochastically,mao2020towards}.

In an important part of the literature~\cite{shah2016stochastically,mao2020towards,liu2020better}, the main focus is not only to estimate $\pi^*$, but also to reconstruct the unknown matrix $M$. In the latter, the \emph{estimation} or \emph{reconstruction} error of an estimator $\hat M$ is measured through reconstruction loss 
\begin{align}\label{eq:reconstruction:loss}
\|\hat M - M\|_F^2\enspace . 
\end{align}
It turns out that reconstructing the matrix $M$ is more challenging than estimating a permutation $\pi^*$: the optimal error for reconstruction is of the order of the sum of the optimal error for permutation estimation --in the sense of~\eqref{eq:estpermM} and the optimal error for reconstructing an isotonic matrix --with known permutation.

In this review, our primary objective is to provide the key ideas for establishing the information-theoretical error for permutation estimation in the general isotonic model $\overline{\bbC}_{\iso}$ and to explain how this statistically optimal error is achievable by a polynomial-time estimator. To gain insight, we shall first consider the much simpler problems of sub-matrix detection and estimation. Indeed, solving the isotonic ranking problem will be argued to be akin in some sense to solving in a robust way several (interacting) problems that are related to (partial) sub-matrix estimation. This is alluded to in~\cite{pilliat2022optimal,pilliat2024optimal}, but not made explicit, and we want to contextualise this further in this survey - which is what we will do in Sections~\ref{sec:peeling}~and~\ref{sec:ranking}.

\subsection{Sub-matrix detection and estimation.}\label{ss:subma}

Rectangular sub-matrix models can be interpreted as a specific instance of crowdsourcing models  where $M$ is of the form $\lambda \mathbf 1\{S\times T\}$ with $\lambda \in [0,1]$, and where $S\subset [n]$ and $T\subset [d]$. In the random graph terminology~\cite{wu2018statistical}, $(S,T)$ is sometimes called an hidden bi-clique or an hidden clique when $M$ is assumed to be symmetric. To simplify the discussion and to further connect it to ranking problems, we restrict here our attention to binary data $Y\in \{-1,1\}^{n\times d}$ but such models are usually defined and studied with general subGaussian distributions.

Within the important literature on submatrix models, we can distinguish two classes of statistical problems. In submatrix \emph{detection}, one aims at testing whether  $\lambda=0$ (or equivalently $M=0_{n\times d}$) against the alternative that $\lambda>0$. In submatrix \emph{estimation} or \emph{localization}, the objective is to reconstruct the unknown matrix $M$, usually in Frobenius norm, which in some way related to estimating both $S$ and $T$. In this survey, as our ultimate objective is to consider general ranking problems, we also consider the intermediary problem of solely estimating the subset $S$, that is the support of the rows.

For $\lambda=1$ and for symmetric matrices $M$ and data $Y$, this submatrix model is equivalent to the random planted clique model, one of the most iconic probabilistic models where a so-called \emph{computational-statistical gap} is strongly conjectured. In particular, whereas it is statistically possible to detect the submatrix or estimate the submatrix with a small error as soon as $|S|$ is large compared to $\log(n)$, the best available polynomial-time procedures are only proved to do this for $|S|$ large compared to $\sqrt{n}$ --see e.g.~\cite{DM15} for tight results. This is for instance achieved using the largest singular vector/eigenvalue of the matrix $Y$. over the last decade, a long line of literature has flourished towards providing evidence of the impossibility for polynomial-time procedures below the $\sqrt{n}$ threshold. In the next subsection, we shall shortly review such approaches. 
In the general rectangular submatrix models with general $n$, $d$, or $\lambda$, other phenomenons may arise. For instance, in some regimes, it is possible to detect in polynomial the presence of the submatrix, while it is impossible to estimate efficiently the matrix $M$ (or equivalently $S$ and $T$) with a small error~\cite{butucea2015sharp}. This phenomenon
coined as a \emph{detection-to-estimation gap} has important consequences, most notably for characterizing the statistical and computational complexity of the problem. For these reasons, submatrix problems have attracted a lot of attention in the recent statistical and machine learning literature --see~\cite{butucea2013detection,ma2015computational,butucea2015sharp,arias2014community,brennan2019universality,cai2017computational,chen2016statistical,wu2018statistical,hajek2017information,gamarnik2021overlap,hajek2018submatrix,cai2020statistical,chhor2025optimal} to name a few. An important part of the literature have been dedicated to the case where the matrix $Y$ is a sparse random graph~\cite{verzelen2015community,hajek2015computational,zdeborova2016statistical,chen2016statistical,montanari2015finding,yu2025counting}, although this will note be our purpose here.  Also, we point out that in highly rectangular regimes (e.g $n\ll d$), some phenomenons may arise due to the asymmetry; for instance, in some regimes it is possible to estimate the subset $S$ but not $T$. While this has been somewhat disregarded in the literature but see~\cite{chhor2025optimal} among others, this will be important towards our ranking purpose.

\subsection{Computational-statistical gaps.} \label{ss:compLB}

Characterizing computational-statistical gaps - following what we introduced in Subsection~\ref{ss:subma} - has been attracting a lot of interest in the recent years. Since statistical problems involve random instances, classical worst-case complexity classes (P, NP, etc.) are not well suited for characterizing hardness. Instead, computational lower bounds are typically established within specific models of computation, such as the sum-of-squares (SoS) hierarchy~\cite{HopkinsFOCS17,Barak19}, the overlap gap property~\cite{gamarnik2021overlap}, the statistical query framework~\cite{kearns1998efficient,brennan2020statistical}, and the low-degree polynomial model~\cite{hopkins2018statistical,KuniskyWeinBandeira,SchrammWein22}, sometimes in combination with reductions between statistical problems~\cite{brennan2020reducibility,pmlr-v30-Berthet13,brennan2018reducibility}.

Among these, low-degree polynomial (LD) lower bounds have recently emerged as a powerful tool for establishing state-of-the-art computational lower bounds in a variety of detection problems---including community detection~\cite{Hopkins17}, spiked tensor models~\cite{Hopkins17,KuniskyWeinBandeira}, sparse PCA~\cite{ding2024subexponential}  among others---and estimation problems---including submatrix estimation~\cite{SchrammWein22}, stochastic block models and graphons~\cite{luo2023computational,SohnWein25}, dense cycle recovery~\cite{mao2023detection}, and planted coloring~\cite{kothari2023planted}---see~\cite{SurveyWein2025} for a recent survey. In the LD framework, we restrict attention to estimators---or test statistics---that are multivariate polynomials of degree at most $D$ in the observations. The central conjecture in the LD literature is that, for many problems, degree-$O(\log n)$ polynomials are as powerful as any polynomial-time algorithm. Consequently, proving failure for all degree-$O(\log n)$ polynomials  provides strong evidence~\cite{KuniskyWeinBandeira} of polynomial-time hardness.
The LD framework connects to several other computational models, including statistical queries~\cite{brennan2020statistical}, free-energy landscapes from statistical physics~\cite{bandeira2022franz}, and approximate message passing~\cite{montanari2025equivalence}. 

In this paper, we focus on low-degree polynomials lower bounds for proving computational hardness for the problems of sub-matrix detections and estimations, which is a sub-problem of ranking. We summarize and re-prove there a few results for this problem - see Sections~\ref{sec:detection} and~\ref{sec:rankingrest}.

\subsection{Notation and organization.}
 For two matrices $A$ and $B$ in $\mathbb{R}^{n\times d}$, we write that $A\geq B$ when $A_{i,j}\geq B_{i,j}$ for all $(i,j)\in [n]\times [d]$. Given a set $A$, we write $|A|$ for its cardinality. Given a matrix $M\in [0,1]^{n\times d}$, we write $\mathbb{E}_{M}[.]$ for the expectation to emphasize the dependency on $M$. We will write $\Pi_n$ for the collection of all permutations of $[n]$. Given $x>0$, we write $\log_2(x)$ for the logarithmic in base 2.

This paper is organized as follows. In Section~\ref{sec:peeling}, we describe the connection between permuted isotonic matrices $\overline{\bbC}_{\iso}$ and submatrix models. This motivates the analysis of sub-matrix detection in Section~\ref{sec:detection}. We 
review the statistically optimal  conditions  and establish LD lower bounds. We deduce from that some consequences in the general permuted isotonic model $\overline{\bbC}_{\iso}$. 
In section~\ref{sec:rankingrest}, we move to the ranking problem by first restricting our attention to submatrices $M = \lambda \mathbf 1\{S\times T\}$. In this case, the ranking problem is akin to estimating $S$. Similarly to the previous section, we establish information-theoretical rates and give LD lower bounds. On the technical side, our new proof of the LD lower-bound, in our opinion, more intuitive and transparent than usual techniques. 
Finally, in Section~\ref{sec:ranking}, we go back to the general ranking problem over $\overline{\bbC}_{\iso}$. We explain how this problem can be decomposed as a hierarchy of (interacting) problems of sub-matrix (partial) estimation of the form $\lambda \mathbf 1\{S\times T\}$. We present the main result of the literature in this model and explain why no statistical-computational gap arise in the model $\overline{\bbC}_{\iso}$. 
We finish by providing a more in-depth discussion of the literature. Proofs are given in the appendix.

\section{From ranking to sub-matrix detection and estimation models}\label{sec:peeling}

A block matrix is a matrix of the form $M'= \lambda \mathbf 1\{S\times T\}$ for some $\lambda>0$, and some subsets $S\subset[n]$ and $T\subset [d]$. Obviously, such a matrix $M'$ belong to $\overline{\bbC}_{\iso}$. With the crowdsourcing terminology, $S$ corresponds to the set of 'good experts' and $[n]\setminus S$ to that of bad experts. The ranking problem then amounts to estimating $S$. Although such block matrices are arguably quite specific, we establish in this section that any matrix $M\in \overline{\bbC}_{\iso}$ can be lower bounded by a block matrix $M'$  whose Frobenius norm is of same order as that of $M$, up to a logarithmic factor in  $n$, $d$ and up to a negligible additional term. In this way, we relate the model of crowdsourcing in the isotonic model to the one of planted sub-matrix.

Given $\lambda\in (0,1)$, $K_n\in [n]$ and $K_d\in [d]$, we define the collection $\cH(\lambda, K_n, K_d)$ of block matrices of size $K_n$ and $K_d$ by 
$$\cH(\lambda, K_n, K_d) := \left\{M=  \lambda \mathbf 1\{S\times T\}:\quad  |S|= K_n, |T|=K_d\right\}\enspace .$$ 
As explained previously, we have  $\cH(\lambda, K_n, K_d)\subset \overline{\bbC}_{\iso}$. Next, we define the collection $\overline{\cH}(\lambda, K_n, K_d)$ of matrices that are lower bounded by a block matrix in $\cH(\lambda, K_n, K_d)$, that is 
$$\overline{\cH}(\lambda, K_n, K_d) = \left\{M\in  \overline{\bbC}_{\iso} ~\mathrm{s.t.}~\exists M_1\in\cH(\lambda, K_n, K_d)\, \, \mathrm{ with }\, \, M\geq M_1\right\}\enspace ,$$
where $M\geq M_1$ means that all the entries of $M$ are higher or equal to that of $M_1$. 
In what follows and when there is no ambiguity, we will write $\cH$ for $\cH(\lambda, K_n, K_d,n.d)$ and $\overline{\cH}$ for $\overline{\cH}(\lambda, K_n, K_d)$.

The following lemma actually states that any matrix $M\in \overline{\bbC}_{\iso}$ belongs to some  collection  $\overline{\cH}(\lambda, K_n, K_d)$ where 
where $\|M\|^2_F$ is not much greater than $\lambda^2 K_nK_d$.

\begin{lemma}\label{lem:peeling}
	Fix any matrix $M\in \overline{\bbC}_{\iso}$ and any positive integer $p$. 
    There exist $(\lambda , K_n,K_d)$ and a matrix $M' \in \cH(\lambda, K_n, K_d)$, such that $M\geq M'$ and   
    \[
    8 p \log_2(n\land d) \lambda^2K_nK_d + 2^{-2p} nd \geq \|M\|_F^2\enspace .     
    \]
\end{lemma}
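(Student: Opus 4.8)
\emph{Plan.} First I would reduce to the case where $M$ itself is isotonic, i.e.\ $M\in\bbC_{\iso}$: relabelling the rows of $M$ by an oracle permutation $\pi^{*}$ changes neither $\|M\|_F^2$ nor the existence of a dominating block matrix (a block $S'\times T$ for $M_{\pi^{*-1}}$ pulls back through $\pi^{*}$ to the block $\pi^{*-1}(S')\times T$ for $M$, with the same cardinalities), so it suffices to treat $M\in\bbC_{\iso}$. I would also dispose of the degenerate case $M=0$ (where the single largest entry of any nonzero $M$ already gives a valid block, or the bound is vacuous) and assume $n\wedge d\ge 2$, since otherwise the factor $\log_2(n\wedge d)$ vanishes.

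Second, I would peel $\|M\|_F^2$ into dyadic layers. Writing $N(\lambda):=|\{(i,k):M_{i,k}>\lambda\}|$, the layer-cake identity gives $\|M\|_F^2=\int_0^1 2\lambda\,N(\lambda)\,d\lambda$. Splitting $[0,1]$ into $[0,2^{-p}]$ and the $p$ intervals $[2^{-\ell},2^{-\ell+1}]$ for $\ell=1,\dots,p$, then bounding $N$ by $nd$ on the first and by $N(2^{-\ell})$ on the $\ell$-th (using $\int_{2^{-\ell}}^{2^{-\ell+1}}2\lambda\,d\lambda=3\cdot2^{-2\ell}$), yields
\[
\|M\|_F^2\ \le\ 2^{-2p}nd\ +\ 3\sum_{\ell=1}^{p}2^{-2\ell}\,N(2^{-\ell})\ \le\ 2^{-2p}nd\ +\ 3p\cdot 2^{-2\ell^{*}}N(2^{-\ell^{*}}),
\]
where $\ell^{*}\in\{1,\dots,p\}$ is chosen to maximize $2^{-2\ell}N(2^{-\ell})$.

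Third --- and this is the heart of the argument --- I would extract a large solid rectangle from the super-level set $\Omega:=\{(i,k):M_{i,k}>2^{-\ell^{*}}\}$, which has size $N(2^{-\ell^{*}})$. Since the columns of $M$ are non-increasing, $\Omega$ is downward-closed in each column, hence determined by its column heights $c_k:=|\{i:(i,k)\in\Omega\}|$; sort them as $c_{(1)}\ge c_{(2)}\ge\cdots$. For every $m$ the first $m$ rows restricted to the $m$ tallest columns form a rectangle inside $\Omega$, so $m\,c_{(m)}\le R$, and for every $t$ the first $t$ rows restricted to the $g(t):=|\{k:c_k\ge t\}|$ columns of height $\ge t$ give $t\,g(t)\le R$, where $R$ denotes the maximal rectangle area in $\Omega$. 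Combining $|\Omega|=\sum_m c_{(m)}$ with a dyadic grouping of the indices $m$ gives $|\Omega|\le(\log_2 d+1)R$; combining $|\Omega|=\sum_{t\ge1} g(t)$ with $g(t)\le\min(d,R/t)$ and a harmonic-sum estimate gives $|\Omega|\le(2+\ln n)R$. Taking whichever is smaller, and treating $n\wedge d=2$ by a direct two-row computation (which even gives $|\Omega|\le\tfrac32 R$), one obtains $|\Omega|\le\tfrac83\log_2(n\wedge d)\,R$ for all $n\wedge d\ge 2$.

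Finally I would assemble the pieces: let $S\times T\subseteq\Omega$ realize the rectangle with $|S|\,|T|=R$, set $\lambda:=2^{-\ell^{*}}\in(0,\tfrac12]\subset(0,1)$, $K_n:=|S|$, $K_d:=|T|$, and $M':=\lambda\mathbf 1\{S\times T\}\in\cH(\lambda,K_n,K_d)$. Then $M\ge M'$ because every entry of $M$ on $S\times T$ exceeds $\lambda$, and
\[
\|M\|_F^2\ \le\ 2^{-2p}nd+3p\,\lambda^2 N(2^{-\ell^{*}})\ \le\ 2^{-2p}nd+3p\,\lambda^2\cdot\tfrac83\log_2(n\wedge d)K_nK_d\ =\ 2^{-2p}nd+8p\log_2(n\wedge d)\,\lambda^2K_nK_d,
\]
and pulling this block back through $\pi^{*}$ gives the statement for the original $M\in\overline{\bbC}_{\iso}$. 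The delicate point is the combinatorial claim in the third step --- in particular getting the absolute constant uniformly below $8/3$ for every $n\wedge d\ge 2$; the remaining steps are routine estimates.
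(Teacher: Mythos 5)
Your argument is correct and matches the paper's strategy essentially step for step: both peel $\|M\|_F^2$ into dyadic levels (you via the layer-cake integral, the paper via the sandwich $M\le\sum_u 2^{-u+1}M(u)+2^{-p}$), pick the level maximizing $2^{-2\ell}N(2^{-\ell})$ by pigeonhole, and then extract a maximal rectangle from the staircase super-level set by a dyadic covering argument that loses a $\log_2(n\wedge d)$ factor --- your sorted column-height formulation $m\,c_{(m)}\le R$ with dyadic grouping is the same construction as the paper's doubled rectangles after column permutation. One small slip in the write-up: ``the first $m$ rows restricted to the $m$ tallest columns'' should read ``the first $c_{(m)}$ rows restricted to the $m$ tallest columns,'' since that is the rectangle of area $m\,c_{(m)}$ inside $\Omega$ that your inequality actually uses.
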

This lemma is proved by a combination of a discretization --or peeling-- scheme together with a pigeonhole argument. See Figures~\ref{fig:sample_figure} and~\ref{fig:sample_figure2} for proof ideas. In those figures, the matrix $M$ belongs to $\bbC_{\iso}$ only for visualization purposes. The full proof is given in the appendix.

This lemma is pivotal on the one hand to provide intuitions of worst-case configurations for $M\in  \overline{\bbC}_{\iso}$ and on the other hand to deduce efficient ranking estimators from procedures dedicated to block matrices. Besides, the decomposition in Lemma~\ref{lem:peeling} can be applied to any  submatrix of $M$ opening the door towards hierarchical-type algorithms --see Section~\ref{sec:ranking}.

\begin{figure}
\centering
\begin{minipage}[c]{0.3\textwidth}
\centering
    \includegraphics[width=2.0in]{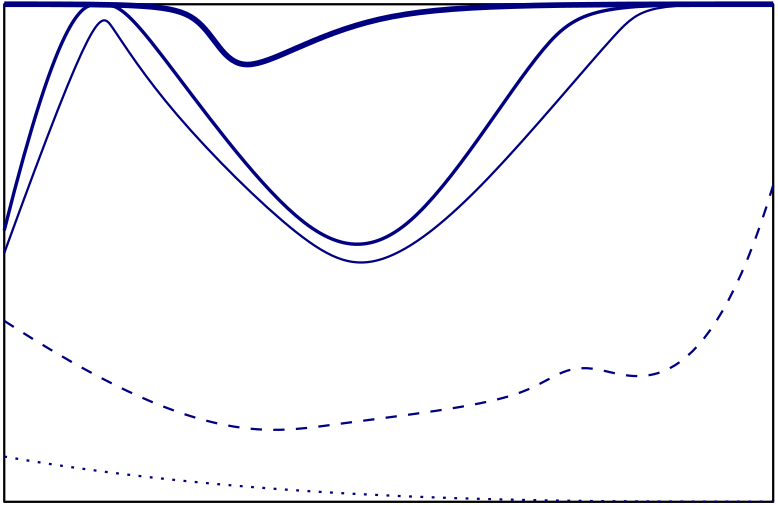}
\end{minipage}
\begin{minipage}[c]{0.3\textwidth}
\centering
    \includegraphics[width=2.0in]{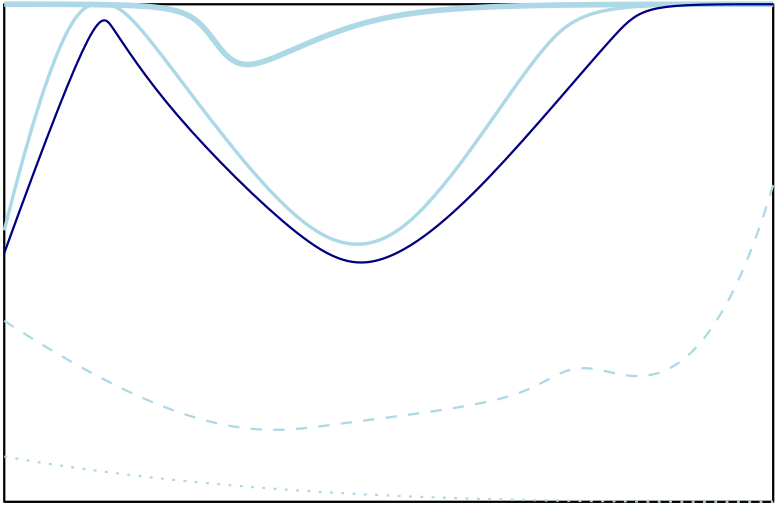}
\end{minipage}
\begin{minipage}[c]{0.3\textwidth}
\centering
    \includegraphics[width=2.0in]{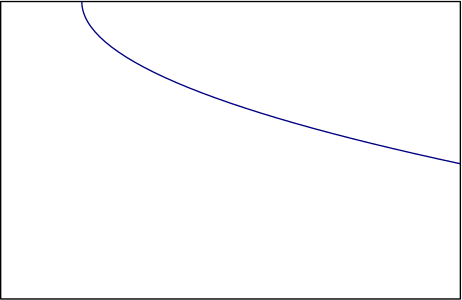}
\end{minipage}
    \caption{All pictures represent matrices of dimension $n \times d$, the rectangle's axis being resp.~the $j \in [d]$ index (abscissa) and the $i\in [n]$ index (ordinate). {\bf The first picture (left)} represents the level sets of the isotonic matrix $M\in \bbC_{\iso}$, on a logarithmic scale. Namely, the heavier line represents the $1/2$ level set --every entry up this line is higher than $1/2$--, the next line the $1/4$ level set, etc. We consider $p$ such level sets (in the picture $p=5$). Any entry between two level sets $u$ and $u+1$ has its value between $2^{-u}$ and $2^{-(u+1)}$. The remaining entries are all smaller than $2^{-p}$. {\bf In the second picture (middle)}, we choose the level $u^*$ set such that the number of entries above it times $2^{-2u^*}$ is maximized (in darker blue). This level set, thresholded at its minimal value $2^{-u^*}$ - which is then a lower bound on $M$ - contains a significant fraction of the $l_2$ norm of the matrix $M$, up to a $p$ factor and up to the remaining terms that lead to a norm of at most $nd 2^{-2p}$. We can therefore focus on this thresholded level set. {\bf The third picture (right)} represents the thresholded level set from the {\bf second picture} after a permutation of the $d$ lines of the matrix so that it becomes bi-isotonic - it is possible to do this as the thresholded level set only takes two values $\{0, 2^{-u^*}\}$ and is isotonic. }
 \label{fig:sample_figure}
\end{figure}

\begin{figure}
\centering
\begin{minipage}[c]{0.3\textwidth}
\centering
    \includegraphics[width=2.0in]{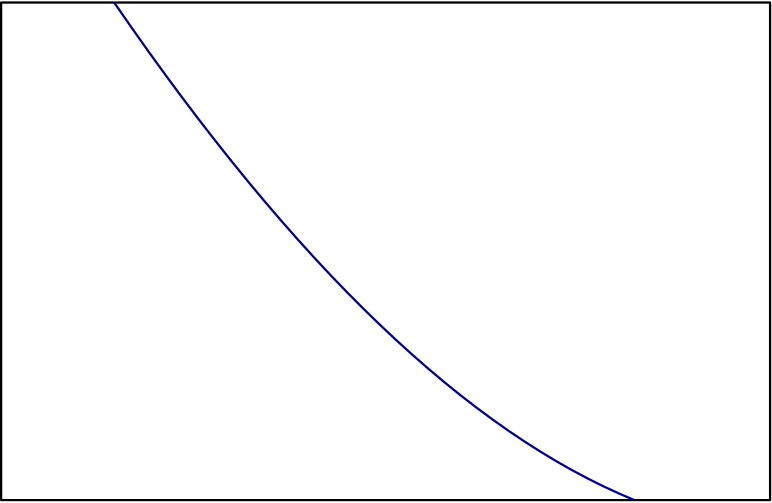}
\end{minipage}
\begin{minipage}[c]{0.3\textwidth}
\centering
    \includegraphics[width=2.0in]{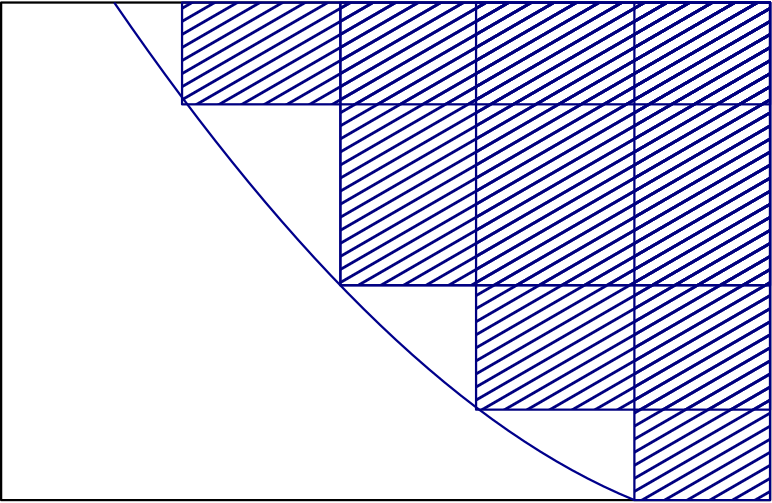}
\end{minipage}
\begin{minipage}[c]{0.3\textwidth}
\centering
    \includegraphics[width=2.0in]{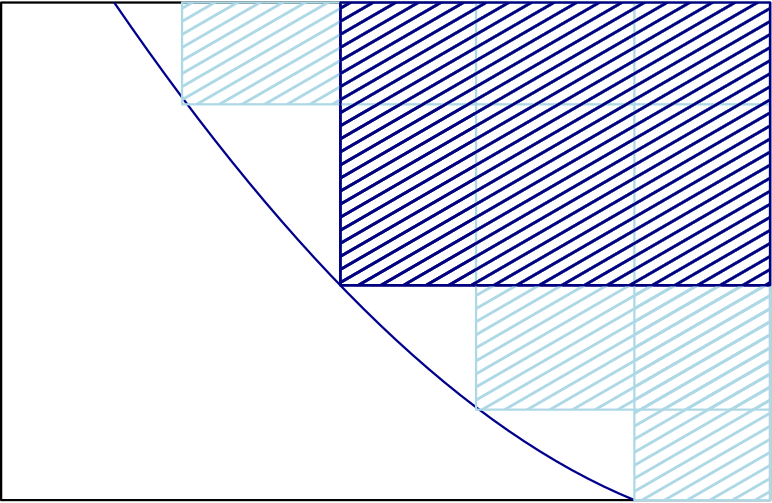}
\end{minipage}
    \caption{All pictures represent matrices of dimension $n \times d$, the rectangle's axis being resp.~the $j \in [d]$ index (abscissa) and the $i\in [n]$ index (ordinate). {\bf The first picture (left)} is the level set of a bi-isotonic matrix taking only two values, representing a permuted and thresholded level set of $M\in \bbC_{\iso}$ as in the third picture of Figure~\ref{fig:sample_figure}. {\bf In the second picture (middle)} - assuming w.l.o.g. $d \geq n$ - we insert in this level set $\lfloor \log_2(n)\rfloor$ rectangles of ordinate side $\{1,2,4,8,\ldots, 2^{\lfloor \log_2(n)\rfloor}\}$, and with one corner in the bottom right. These $\lfloor \log_2(n)\rfloor$ rectangles are each in the level set, and if we double their ordinate side, they completely cover the level set. For this reason {\bf in the third picture (right)}, if we take the one of these $\lfloor \log_2(n)\rfloor$ rectangles of maximal surface, we have a lower bound on the two-valued bi-isotonic matrix that is also capturing a $1/ \lfloor \log_2(n)\rfloor$ fraction of its norm - see rectangle in darker blue. This can be used as a lower bound of the permuted matrix $M$.}
 \label{fig:sample_figure2}
\end{figure}

\section{Submatrix detection and Ranking detection}\label{sec:detection}

As a first step, we consider the simpler problem of detecting whether all experts are similar and questions are similar. Denoting $M_0=0_{n\times d}$ for the null $n\times d$ matrix, this amounts to testing whether the matrix $M$ is equal to $M_0$ or not. As our ultimate objective is to estimate the matrix $M$ or an underlying ranking $\pi^*$, we shall slightly deviate from the typical perspective on testing problems and rather treat it as  a functional estimation problem. Define the functional 
\begin{equation}\label{eq:definition:x_0}
x_0 := \mathbf 1\{M \neq M_0\} \enspace .
\end{equation}
We quantify the quality of an estimator $f(Y)$ of $x_0$ using the quadratic loss $\mathbb E_M[(f(Y) - x_0)^2]$. If $f(Y)$ corresponds to a test, $\mathbb E_M[(f(Y) - x_0)^2]$ simply corresponds to the type I error probability when $M=M_0$ and a type II error probability for $M\neq M_0$.

In light of the reduction from $\overline{\bbC}_{\iso}$ to enveloppes $\overline{\cH}(\lambda,K_n,K_d)$ of collection of submatrices $H(\lambda,K_n,K_d)$, we focus on the risks $\mathbb E_M[(f(Y) - x_0)^2]$ under the null hypothesis $M=M_0$ and under alternative hypotheses of the form $\overline{\cH}(\lambda,K_n,K_d)$. This problem has attracted a lot of attention and tight upper and lower bounds for both Gaussian~\cite{butucea2013detection} and Bernoulli~\cite{arias2014community} version of this problems have been established. Besides, submatrix detection is the archetypical problem  for which computational-statistical gaps occur~\cite{ma2015computational,cai2020statistical,brennan2019universality,KuniskyWeinBandeira}. The main purpose of the section is to review optimal rates for this problem as well as optimal computational rates.

\subsection{Estimators of $x_0$.} 
First, we introduce several simple and, yet optimal estimators of $x_0$ for matrices $\cH \cup \{M_0\}$. They mimic known optimal tests for detecting a planted clique in a dense random graph which interprets as a symmetric counterpart of the submatrix detection problem. Fix a tuning parameter $\delta\in (0,1/2)$. 
The four following procedures respectively amount to detecting the signature of $M\in \overline{\cH}$ by computing the global sum ($f_{gs}$) of the entries of $Y$, the maximum row sum ($f_{\mathrm{rs}}$), the maximum column sum ($f_{\mathrm{cs}}$), or to scan the maximum sum over submatrices of size $m$  $(f_{\mathrm{ss},m})$. 
\begin{align*}
f_{\mathrm{gs}} = \mathbf{1}\left\{\sum_{i=1}^n\sum_{k=1}^{d} Y_{i,k} \geq \sqrt{2nd\log(1/\delta)}\right\} ;
f_{\mathrm{rs}} &= \mathbf{1}\left\{\sup_{i=1}^n \sum_{k=1}^d Y_{i,k} \geq \sqrt{2d\log(n/\delta)}\right\}\ ;  f_{\mathrm{cs}} = \mathbf{1}\left\{\sup_{k=1}^d \sum_{i=1}^n Y_{i,k} \geq \sqrt{2n\log(d/\delta)}\right\}\ ;\\
f_{\mathrm{ss},m} &= \mathbf{1}\left\{\sup_{S\subset [n], T\subset [d]:\  |S|\lor |T| \leq m}\sum_{(i,k)\in S\times T} Y_{i,k} \geq m\sqrt{2m\log(nd/\delta)}\right\}\enspace.
\end{align*}

\begin{proposition}\label{thm:UBtest}
    Fix any $m\leq n\wedge d$ and $\delta\in (0,1)$. First, for any $f= f_{\mathrm{gs}}$, $f_{\mathrm{rs}}$,  $f_{\mathrm{cs}}$, $f_{\mathrm{ss},m}$, we have $\E_{M_0}[(f- x_0)^{2}]\leq \delta$. For any $\lambda \in (0,1)$, $K_n$,  $K_d$,  and any $M\in \overline{\cH}(\lambda,K_n,K_d)$, we have 
	\begin{enumerate}
		\item $\E_{M} [(f_{\mathrm{gs}}-x_0)^2]\leq \delta$ if $\lambda \frac{K_dK_n}{\sqrt{nd}} \geq 2\sqrt{2\log(1/\delta)}$.
		\item $\E_{M} [(f_{\mathrm{rs}}-x_0)^2]\leq \delta$ if~~~$\lambda \frac{K_d}{\sqrt{d}} \geq 2\sqrt{2\log(n/\delta)}$~~~~and $\E_{M} [(f_{\mathrm{cs}}-x_0)^2]\leq \delta$ if~~~$\lambda \frac{K_n}{\sqrt{n}} \geq 2\sqrt{2\log(d/\delta)}$.
		\item $\E_{M} [(f_{\mathrm{ss},m}-x_0)^2]\leq \delta$ if~~~$\lambda \sqrt{m} \geq 2\sqrt{2\log(nd/\delta)}$ and $m \leq K_n \land K_d$.
	\end{enumerate}
\end{proposition}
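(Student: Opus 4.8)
The plan is to treat all four statistics uniformly: each is an indicator that some linear (or supremum of linear) functional of $Y$ exceeds a threshold, so the analysis reduces to two ingredients — a concentration bound under the null $M=M_0$ (to control the type I error), and an anti-concentration / deviation bound under an alternative $M\in\overline{\cH}(\lambda,K_n,K_d)$ (to control the type II error). Since each $Y_{i,k}\in\{-1,1\}$ is bounded, $Y_{i,k}-M_{i,k}$ is a centered random variable bounded in $[-2,2]$, hence $1$-subgaussian after the obvious scaling; I will invoke Hoeffding's inequality throughout. Concretely, for any fixed index set $A\subset[n]\times[d]$, $\Prob_{M}\!\big[\sum_{(i,k)\in A}(Y_{i,k}-M_{i,k})\ge t\big]\le \exp(-t^2/(2|A|))$, and symmetrically for the lower tail.

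\textbf{Null control.} Under $M=M_0$ we have $x_0=0$, so $\E_{M_0}[(f-x_0)^2]=\Prob_{M_0}[f=1]$. For $f_{\mathrm{gs}}$, apply Hoeffding with $|A|=nd$ and $t=\sqrt{2nd\log(1/\delta)}$ to get probability at most $\delta$. For $f_{\mathrm{rs}}$ and $f_{\mathrm{cs}}$, apply Hoeffding to each of the $n$ (resp.\ $d$) row (resp.\ column) sums with $t=\sqrt{2d\log(n/\delta)}$ (resp.\ $\sqrt{2n\log(d/\delta)}$), then union bound over the $n$ (resp.\ $d$) choices; the $\log n$ (resp.\ $\log d$) inside the threshold exactly absorbs the union bound, leaving $\delta$. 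For $f_{\mathrm{ss},m}$, fix a submatrix $S\times T$ with $|S|\lor|T|\le m$, apply Hoeffding with $|A|=|S||T|\le m^2$ and $t=m\sqrt{2m\log(nd/\delta)}$, so the per-submatrix probability is at most $\exp(-m^3\log(nd/\delta)/m^2)=(nd/\delta)^{-m}$; union bounding over at most $\binom{n}{m}\binom{d}{m}\le (nd)^m$ choices leaves $\delta$. This proves the first sentence.

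\textbf{Alternative control.} Now $x_0=1$, so $\E_M[(f-x_0)^2]=\Prob_M[f=0]$, i.e.\ the probability the statistic falls \emph{below} threshold. Fix $M_1=\lambda\mathbf 1\{S^\star\times T^\star\}\in\cH(\lambda,K_n,K_d)$ with $M\ge M_1$; monotonicity of each statistic in the entries of $M$ (all the linear functionals have nonnegative coefficients on the relevant blocks) lets me replace $M$ by $M_1$, i.e.\ it suffices to lower-bound the signal for $M_1$. For $f_{\mathrm{gs}}$: $\E_{M_1}[\sum Y_{i,k}]=\lambda K_nK_d$; writing $\sum Y_{i,k}=\lambda K_nK_d+\sum(Y_{i,k}-(M_1)_{i,k})$ and applying Hoeffding to the second term with $|A|=nd$, the statistic is below $\sqrt{2nd\log(1/\delta)}$ with probability $\le\delta$ provided $\lambda K_nK_d-\sqrt{2nd\log(1/\delta)}\ge\sqrt{2nd\log(1/\delta)}$, i.e.\ $\lambda K_nK_d/\sqrt{nd}\ge 2\sqrt{2\log(1/\delta)}$, which is hypothesis 1. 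For $f_{\mathrm{rs}}$: restrict the supremum to a single row $i\in S^\star$; its sum has mean $\ge\lambda K_d$ (the $K_d$ good columns contribute $\lambda$, the rest $\ge -$nothing since $M\ge M_1$ means those entries are $\ge 0$... more carefully, mean $=\sum_k M_{i,k}\ge\lambda K_d$), and Hoeffding on the $d$ centered terms gives that this row sum is below $\sqrt{2d\log(n/\delta)}$ with probability $\le\delta/n\le\delta$ provided $\lambda K_d-\sqrt{2d\log(n/\delta)}\ge\sqrt{2d\log(n/\delta)}$, i.e.\ $\lambda K_d/\sqrt d\ge 2\sqrt{2\log(n/\delta)}$, hypothesis 2; the column case is symmetric. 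For $f_{\mathrm{ss},m}$ with $m\le K_n\land K_d$: pick any $S\subset S^\star$, $T\subset T^\star$ with $|S|=|T|=m$; this block has mean $\ge\lambda m^2$, and Hoeffding on the $m^2$ centered terms gives the block sum below $m\sqrt{2m\log(nd/\delta)}$ with probability $\le(nd/\delta)^{-m}\le\delta$ provided $\lambda m^2-m\sqrt{2m\log(nd/\delta)}\ge m\sqrt{2m\log(nd/\delta)}$, i.e.\ $\lambda\sqrt m\ge 2\sqrt{2\log(nd/\delta)}$, hypothesis 3.

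\textbf{Main obstacle.} The only subtlety — and the one point deserving care in the write-up — is the monotonicity reduction from a general $M\in\overline{\cH}$ to the block matrix $M_1$: one must verify that for each statistic the event $\{f=0\}$ has probability no larger under $M$ than under $M_1$. This follows from a standard stochastic-domination coupling (couple $Y^{(M_1)}_{i,k}\le Y^{(M)}_{i,k}$ entrywise using monotone couplings of the two-point laws, valid since $(M_1)_{i,k}\le M_{i,k}$ everywhere), after which each statistic, being coordinatewise nondecreasing in $Y$, satisfies $f(Y^{(M_1)})\le f(Y^{(M)})$, so $\Prob_M[f=0]\le\Prob_{M_1}[f=0]$. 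Everything else is a routine bookkeeping of Hoeffding tails and union bounds as sketched above.
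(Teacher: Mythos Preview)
Your proof is correct and takes essentially the same approach as the paper: Hoeffding's inequality plus union bounds under the null, and Hoeffding combined with the signal lower bound under the alternative. The only cosmetic difference is that you make the reduction from a general $M\in\overline{\cH}$ to the block matrix $M_1\in\cH$ explicit via an entrywise stochastic-domination coupling, whereas the paper simply invokes $\E_M[\sum Y_{i,k}]\ge\lambda K_nK_d$ directly; both routes are equivalent here.
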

Such results are classical --see e.g.~\cite{butucea2013detection,ma2015computational}-- and are easily shown using Bernstein's inequality. Still, for the sake of completeness, a proof is provided in the appendix. 

\begin{corollary}\label{cor:est}
Consider any $(\lambda,K_n,K_d,m)$ and any $\delta\in (0,1)$.  
Assume that
\begin{equation}\label{eq:condition_minimax_detection}
\left[\lambda \frac{K_dK_n}{\sqrt{nd}}\right] \lor \left[\lambda \frac{K_d}{\sqrt{d}} \right]\lor \left[\lambda \frac{K_n}{\sqrt{n}} \right] \lor  \left[\lambda \sqrt{m\land K_n\land K_d} \right]\geq 2\sqrt{2\log(nd/\delta)}\enspace . 
\end{equation}
For any integer $1 \leq m \leq n\land d$, the estimator $f_{m,K_n,K_d} = f_{\mathrm{gs}}\lor f_{\mathrm{rs}}\lor  f_{\mathrm{cs}}\lor f_{\mathrm{ss},m\land K_n \land K_d}$ satisfies
$$\sup_{M\in \overline{\cH}(\lambda,K_n,K_d) \cup \{M_0\}}\E_M[(f_{m,K_n,K_d}-x_0)^2] \leq 4\delta.$$
\end{corollary}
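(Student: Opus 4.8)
The plan is to derive Corollary~\ref{cor:est} directly from Proposition~\ref{thm:UBtest} by a union bound, using crucially that each of the four constituent statistics $f_{\mathrm{gs}},f_{\mathrm{rs}},f_{\mathrm{cs}},f_{\mathrm{ss},m\land K_n\land K_d}$ is $\{0,1\}$-valued, so that for any $M$ the risk $\E_M[(f-x_0)^2]$ equals $\P_M[f=1]$ when $M=M_0$ (a type~I error) and $\P_M[f=0]$ when $M\neq M_0$ (a type~II error).

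First I would handle the null hypothesis $M=M_0$. Since $f_{m,K_n,K_d}$ is the maximum of the four tests, the event $\{f_{m,K_n,K_d}=1\}$ is contained in the union of the four events $\{f=1\}$ over $f\in\{f_{\mathrm{gs}},f_{\mathrm{rs}},f_{\mathrm{cs}},f_{\mathrm{ss},m\land K_n\land K_d}\}$; by the first assertion of Proposition~\ref{thm:UBtest} each of these has probability at most $\delta$ under $M_0$, so a union bound yields $\E_{M_0}[(f_{m,K_n,K_d}-x_0)^2]\le 4\delta$. This is where the factor $4$ originates, and it is not optimized.

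Next I would handle an alternative $M\in\overline{\cH}(\lambda,K_n,K_d)$, where $x_0=1$. Because $\{f_{m,K_n,K_d}=0\}\subseteq\{f=0\}$ for every one of the four constituent tests $f$, we get $\P_M[f_{m,K_n,K_d}=0]\le\min_f\P_M[f=0]$, so it suffices to exhibit one of the four tests whose type~II error is at most $\delta$. Assumption~\eqref{eq:condition_minimax_detection} says that at least one of $\lambda K_nK_d/\sqrt{nd}$, $\lambda K_d/\sqrt{d}$, $\lambda K_n/\sqrt{n}$, $\lambda\sqrt{m\land K_n\land K_d}$ is $\ge 2\sqrt{2\log(nd/\delta)}$; since $\log(nd/\delta)$ dominates each of $\log(1/\delta)$, $\log(n/\delta)$, $\log(d/\delta)$, whichever of the four quantities attains the bound, the corresponding sufficient condition of Proposition~\ref{thm:UBtest} is satisfied for the matching test (respectively $f_{\mathrm{gs}}$, $f_{\mathrm{rs}}$, $f_{\mathrm{cs}}$, or $f_{\mathrm{ss},m\land K_n\land K_d}$; for the scan statistic one also checks the side requirement, which reads $m\land K_n\land K_d\le K_n\land K_d$ and is trivially true). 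Hence that test has type~II error $\le\delta$, and therefore $\P_M[f_{m,K_n,K_d}=0]\le\delta\le 4\delta$. Taking the supremum of the two displayed bounds over $\overline{\cH}(\lambda,K_n,K_d)\cup\{M_0\}$ gives the claim.

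There is no genuine obstacle here: the argument is pure bookkeeping built on Proposition~\ref{thm:UBtest}. The only two points that warrant a moment's attention are the monotonicity of the logarithmic terms, which lets the single threshold $2\sqrt{2\log(nd/\delta)}$ simultaneously trigger whichever of the four guarantees applies, and the automatic validity of the scan-test side condition $m\land K_n\land K_d\le K_n\land K_d$.
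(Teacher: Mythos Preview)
Your proposal is correct and is exactly the argument the paper has in mind: the corollary is stated without proof because it follows immediately from Proposition~\ref{thm:UBtest} via the union bound under $M_0$ and the observation that the single threshold $2\sqrt{2\log(nd/\delta)}$ majorizes each of the four individual thresholds appearing in Proposition~\ref{thm:UBtest}. Your handling of the scan-test side condition $m\land K_n\land K_d\le K_n\land K_d$ is also the right check.
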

The first three procedures $f_{\mathrm{gs}}$, $f_{\mathrm{rs}}$, and  $f_{\mathrm{cs}}$ are computed in less than $2nd$ operations. In contrast, a naïve implementation of the scan test  $f_{\mathrm{ss},m}$  requires  at least $\binom{n}{m}\binom{d}{m}$ operations which is of the order of $(nd)^m$ if $m$ is much smaller than $n\wedge d$. This raises the twin  questions (i) whether, for $m=K_n\wedge K_d$, Condition~\eqref{eq:condition_minimax_detection} can be achieved by a computationally efficient procedure $f$ or (ii) of if there exists an intrinsic barrier for polynomial-time procedures. 
In the next subsection, we discuss this optimality by establishing minimax and low-degree polynomial lower bounds.

\subsection{Minimax and LD lower bounds.}

Fix $(\lambda,K_n,K_d)$. As usual in the field, we adopt the minimax paradigm and we consider the maximum risk $\sup_{M\in \overline{\cH} \cup \{M_0\}}\E_M[(f-x_0)^2]$ of a procedure $f$. We aim at characterizing the smallest maximum risk over all procedures $f$ or over related subclasses of polynomials. 
For the former, the standard approach known as the second moment moment or the  generalized Le Cam's method~\cite{tsybakov} amounts to introducing a prior distribution on the parameters. Here, we choose $\mu$ as the uniform distribution over $\cH$. We also write $\mathbb P_{M\sim \mu}$ for the marginal distribution of $Y$ when $M$ follows $\mu$. Then, Le Cam's method characterizes the worst-case risk in terms of the $\chi^2$ discrepancy between $\mathbb P_{M\sim \mu}$ and $\mathbb{P}_{M_0}$. Here, we follow a different path to also encompass the LD lower bound. Given an estimator $f$, we define the advantage of $f$ by 
$$\mathrm{Adv}(f) := \frac{\mathbb E_{M\sim \mu}f}{  \mathbb E^{1/2}_{M_0} f^2}\enspace ,$$
with the conventions $0/0=1$. 
The following result mimics Le Cam's Lemma and  lower bounds the worst-case quadratic loss for any estimator $f$ in terms of $\mathrm{Adv}(f)$.
\begin{lemma}\label{lem:lower_bound_loss}
We have
$$\sup_{M\in \cH \cup \{M_0\}}\E_M[(f-x_0)^2] \geq \frac{1}{2(1+ \mathrm{Adv}^2(f))} \geq  \frac{1}{4}\left[1 - (|\mathrm{Adv}(f)|-1)\right]\enspace .$$
\end{lemma}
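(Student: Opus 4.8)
The plan is to run a Le Cam–type (second moment) argument, tailored to the functional-estimation formulation, and then dispatch the second inequality by a one-line algebraic identity. Write $R := \sup_{M\in \cH \cup \{M_0\}}\mathbb{E}_M[(f-x_0)^2]$ for the worst-case quadratic loss. The first thing I would record is that $x_0$ is deterministic given $M$: since $M_0 = 0_{n\times d}$ we have $x_0 = 0$ under $M_0$, while for any $M\in\cH(\lambda,K_n,K_d)$ we have $\lambda>0$ together with $|S|=K_n\ge 1$ and $|T|=K_d\ge 1$, hence $M\ne M_0$ and $x_0 = 1$. Consequently $R \geq \mathbb{E}_{M_0}[f^2] =: A$, and, since a Bayes risk never exceeds the minimax risk and $x_0\equiv 1$ on the support of $\mu$ (the uniform law on $\cH$),
\[
R \;\geq\; \mathbb{E}_{M\sim\mu}\big[(f-x_0)^2\big] \;=\; \mathbb{E}_{M\sim\mu}\big[(f-1)^2\big] \;=:\; B .
\]

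Next I would set $t := \mathbb{E}_{M\sim\mu}[f]$, so that by the very definition of the advantage $\mathrm{Adv}(f) = t/\sqrt{A}$ and therefore $A\cdot\mathrm{Adv}^2(f) = t^2$ whenever $A>0$. Jensen's inequality applied to the convex function $x\mapsto x^2$ gives $B \geq \big(\mathbb{E}_{M\sim\mu}[f-1]\big)^2 = (t-1)^2$. Combining the two lower bounds on $R$ with the nonnegative weights $1$ and $\mathrm{Adv}^2(f)$ (legitimate since $R\ge A$ and $R\ge B$ with $A,B\ge 0$),
\[
R\big(1+\mathrm{Adv}^2(f)\big) \;\geq\; B + A\,\mathrm{Adv}^2(f) \;\geq\; (t-1)^2 + t^2 \;=\; 2\big(t-\tfrac12\big)^2 + \tfrac12 \;\geq\; \tfrac12 ,
\]
which is exactly the first claimed bound $R \geq \tfrac{1}{2(1+\mathrm{Adv}^2(f))}$. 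The degenerate case $A=0$ I would treat separately: either $t\ne 0$, and then $\mathrm{Adv}(f)=+\infty$ so the bound is vacuous, or $t=0$, and then $\mathrm{Adv}(f)=1$ by the stated convention while $B \geq (0-1)^2 = 1$, which already gives $R\geq 1 \geq \tfrac14$.

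For the second inequality I would simply note the elementary identity: for every $a\in\mathbb{R}$,
\[
\frac{1}{2(1+a^2)} - \Big(\tfrac12 - \tfrac{|a|}{4}\Big) \;=\; \frac{|a|\,(|a|-1)^2}{4(1+a^2)} \;\geq\; 0 ,
\]
so that $\tfrac{1}{2(1+a^2)} \geq \tfrac12 - \tfrac{|a|}{4} = \tfrac14\big[1-(|a|-1)\big]$; applying this with $a=\mathrm{Adv}(f)$ finishes the proof.

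I do not expect a genuine obstacle here, since this is the standard generalized Le Cam bound; the only points requiring care are (i) observing that $x_0$ takes the fixed values $0$ on $M_0$ and $1$ on $\cH$, so that $\mathbb{E}_M[(f-x_0)^2]$ collapses to $\mathbb{E}_{M_0}[f^2]$ and $\mathbb{E}_M[(f-1)^2]$ respectively; (ii) introducing the prior $\mu$ to pass from the minimax risk to the Bayes risk; and (iii) tracking the normalization in the definition of $\mathrm{Adv}(f)$ so that $A\cdot\mathrm{Adv}^2(f)=t^2$ is an exact identity, together with the separate $A=0$ case dictated by the $0/0=1$ convention.
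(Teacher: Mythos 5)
Your proof is correct and follows the same second-moment / Le Cam strategy as the paper, but there is a small and pleasant algebraic variation. The paper bounds the worst-case risk from below by the Bayes risk under the prior $\mu'$ that puts mass $1/2$ on $M_0$ and $1/2$ on the uniform law $\mu$ over $\cH$, applies Jensen, and then \emph{minimizes} the resulting quadratic $1+z^2+z^2\mathrm{Adv}^2(f)-2z\,\mathrm{Adv}(f)$ over $z=\E_{M_0}^{1/2}[f^2]$. You instead keep the two one-sided bounds $R\geq A=\E_{M_0}[f^2]$ and $R\geq B=\E_{M\sim\mu}[(f-1)^2]$ and combine them with the nonuniform weights $(\mathrm{Adv}^2(f),1)$, exploiting the exact identity $A\cdot\mathrm{Adv}^2(f)=(\E_{M\sim\mu}f)^2$ to eliminate the optimization step entirely: the $z$-dependence cancels and one lands directly on $2(t-1/2)^2+1/2\geq 1/2$. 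Both routes use Jensen in the same place (to lower bound $B$ by $(t-1)^2$); yours is a touch slicker and, as a bonus, you explicitly handle the degenerate case $\E_{M_0}[f^2]=0$, which the paper excludes by fiat. For the second inequality, your factorization $\tfrac{1}{2(1+a^2)}-\bigl(\tfrac12-\tfrac{|a|}{4}\bigr)=\tfrac{|a|(|a|-1)^2}{4(1+a^2)}\geq 0$ replaces the paper's two-case comparison of $|\mathrm{Adv}(f)|+1$ with $\mathrm{Adv}^2(f)+1$; again, equivalent, slightly cleaner.
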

The  constant estimator $f=1/2$ a.s. satisfies $\E_M[(f-x_0)^2]=1/4$. Hence, the above lemma states that the maximum risk is no smaller than that of constant estimator over a collection $\mathcal{F}$ of estimators if $\max_{f\in \cF}\mathrm{Adv}(f)$ is close to one.

Given a positive integer $D$, we define $\mathcal P_{\leq D}$ as the collection of polynomials of degree at most $D$ with respect to the entries of $Y$ and we define the maximum advantage~\cite{KuniskyWeinBandeira}
\[
\mathrm{Adv}_{\leq D}= \sup_{f\in \cP_{\leq D}}\mathrm{Adv}(f)\enspace . 
\]
 Since the entries of $Y$ are in $\{-1,1\}$, any function $f$ of $Y$ writes as a polynomial of degree $nd$. As a consequence, it suffices to control $\mathrm{Adv}_{\leq D}$ for all $D=nd$ to lower bound the minimax risk $\inf_{f \text{measurable}}\sup_{M\in \cH \cup \{M_0\}}\E_M[(f-x_0)^2]$, whereas bounds of $\mathrm{Adv}_{\leq D}$ for smaller $D$ allow to lower bound the risk of low-degree polynomials and provide evidence of computational-statical gaps.

Given a subset $S\subset [n]\times [d]$, we define the monomial $Y^S = \prod_{(i,k)\in S} Y_{i,k}$ with the convention $Y^{\emptyset}=1$. Equipped with this notation, it is clear that the family $(Y^{S})_{S\subset [n]\times [d]: |S| \leq D}$ is a basis of $\cP_{\leq D}$. Moreover, under $\mathbb{P}_{M_0}$, the entries of $Y$ follow independent Rademacher distributions. As a consequence, for any set $S$ and $S'$, we have $\mathbb{E}_{M_0}[Y^{S}Y^{S'}]= \mathbf{1}\{S= S'\}$ and the family 
$(Y^{S})_{S\subset [n]\times [d]: |S| \leq D}$ is therefore orthonormal under $\mathbb{P}_{M_0}$. Given $S\subset [n]\times [d]$, we write $\omega_S= \mathbb{E}_{M\sim \mu}[Y^{S}]$ and we define the vector $\omega=(\omega_S)_{S:\ |S|\leq D}$. Equipped with this notation, we obtain the following simple form for $\mathrm{Adv}_{\leq D}$.
\begin{align}\label{eq:expression_ADV_D}
\mathrm{Adv}_{\leq D}= \max_{\alpha =(\alpha_{S})_{|S|\leq D}} \frac{\sum_{S: |S|\leq D}\alpha_S\mathbb{E}_{M\sim \mu}[Y^{S}]}{\sqrt{\sum_{S}\alpha_S^2}}=\sqrt{\sum_{S: |S|\leq D}\mathbb{E}^2_{M\sim \mu}[Y^{S}]}\  ,
\end{align}
by Cauchy-Schwarz equality. As a consequence, it suffices to control first moments $\mathbb{E}_{M\sim \mu}[Y^{S}]$. Such direct computations are done e.g.~in~\cite{KuniskyWeinBandeira} for related models. In this manuscript, instead of directly bounding $\sum_{S: |S|\leq D}\mathbb{E}^2_{M\sim \mu}[Y^{S}]$, we take a little detour and introduce a permutation-invariant basis. This does not  simplify much the arguments for controlling $\mathrm{Adv}_{\leq D}$ but this will serve as an important warm-up step towards establishing low-degree lower bounds in submatrix estimation in the next section.

\subsubsection{Permutation-invariant polynomials and Bi-partite graph formalism.}

Both the distribution $\mathbb{P}_{M_0}$ and the distribution $\mathbb{P}_{M\sim \mu}$ are invariant by any permutation of the rows of $Y$ and of the columns of $Y$. As a consequence, it turns out that the maximum advantage $\mathrm{Adv}_{\leq D}$ is achieved by a \emph{permutation-invariant} polynomial and we can restrict our attention to this subclass of polynomials. Before formalizing this property, we introduce a suitable collection of permutation-invariant polynomial. 

Henceforth, we define bi-partite graphs $G= (V,W,E)$ where $V=\{v_1,\ldots v_{r}\}$, $W=\{w_1,w_2,\ldots, w_s\}$ are two nodes sets and where $E\subset V\times W$ is the edge set.
Two such bipartite graphs $G^{(1)}=(V^{(1)},W^{(1)},E^{(1)})$ and  $G^{(2)}=(V^{(2)},W^{(2)},E^{(2)})$ are said to be isomorphic if there exists two bijections $\tau_v: V^{(1)} \mapsto V^{(2)}$, $\tau_w: W^{(1)} \mapsto W^{(2)}$ such that that $\tau_v, \tau_w$ preserve the edges. 
Such a couple of bijections $(\tau_v,\tau_w)$ is called an automorphism of $G^{(1)}$ if $G^{(1)}=G^{(2)}$. In what follows, we write $\mathrm{Aut}(G)$ 
for the  automorphism group of a bi-partite graph $G$.

Let $\mathcal{G}_{\leq D}$ be any maximum collection of bi-partite graphs $G=(V,W,E)$ such that such (i) $G$ does not contain any isolated node,  (ii) $|E|\leq D$, and (iii) no two graphs in $\mathcal{G}_{\leq D}$ are isomorphic. In fact, $\mathcal{G}_{\leq D}$ corresponds to the collection of equivalence classes of bipartite graphs with at most $D$ edges and without isolated nodes. Henceforth, we refer to $\mathcal{G}_{\leq D}$ as the collection of \emph{templates}.

Consider a template $G= (V,W,E)$ where $V=\{v_1,v_2,\ldots, v_r\}$, $W=\{w_1,w_2,\ldots, w_s\}$. Let $\GS_V$ (resp. $\GS_W$) denote the set of injective mapping from $V$ to $[n]$ (resp. from $W\rightarrow [d]$). For $\sigma_r\in \GS_V, \sigma_c\in \GS_W$, we define the polynomials
\begin{equation}\label{eq:definition:P_G}
P_{G,\sigma_{r},\sigma_c}(Y)= \prod_{(i,k)\in E} Y_{\sigma_r(i),\sigma_c(k)} ; \quad \quad P_G = \sum_{\sigma_r\in \GS_V, \sigma\in \GS_W} P_{G,\sigma_{r},\sigma_c}.
\end{equation}
For short, we sometimes write $P_G$ for $P_G(Y)$ when there is no ambiguity. Obviously, $P_G$ is invariant to any permutation of the rows of $Y$ or the columns of $Y$. Since, under $\mathbb{P}_{M_0}$, the entries of $Y$ follow independent Rademacher distributions, we derive that $\mathbb{E}_{M_0}[P_G^2]= |\GS_V||\GS_W||\mathrm{Aut}(G)|$. With this observation, we obtain the normalized polynomials
\begin{equation}\label{eqn:variance of graph}
\Psi_G := \frac{P_G}{\sqrt{\mathbb V(G)}},~~~~~\mathrm{where}~~~\mathbb V(G) = \frac{n!}{(n-|V|)!} \frac{d!}{(d-|W|)!} |\mathrm{Aut}(G)|\enspace . 
\end{equation}
It follows from these definitions that $(1,(\Psi_G)_{G\in \cG_{\leq D}})$ is an orthonormal family of polynomials as summarized in the following lemma. 
\begin{lemma}\label{lem:ortho1}
For any $G^{(1)},G^{(2)} \in \mathcal G_{\leq D}$, we have $\mathbb E_{M_0}[\Psi_{G^{(1)}}]=0$ and $\mathbb E_{M_0}[\Psi_{G^{(1)}} \Psi_{G^{(2)}}]= \1\{G^{(1)}=G^{(2)}\}$.
\end{lemma}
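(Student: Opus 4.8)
The plan is to reduce everything to the elementary fact that, under $\mathbb{P}_{M_0}$, the entries of $Y$ are i.i.d.\ Rademacher, so that for any multiset of pairs in $[n]\times[d]$ the expectation of the associated monomial in $Y$ equals $1$ if every pair occurs an even number of times and $0$ otherwise; in particular $\mathbb{E}_{M_0}[Y^S]=\mathbf 1\{S=\emptyset\}$ for a set $S\subseteq[n]\times[d]$. First I would record the bookkeeping. For a template $G=(V,W,E)\in\mathcal G_{\le D}$ (which has $|E|\ge 1$, the empty graph being represented by the constant $1$) and injections $\sigma_r\in\GS_V$, $\sigma_c\in\GS_W$, injectivity of $\sigma_r,\sigma_c$ together with $G$ being a simple graph implies that $\sigma(E):=\{(\sigma_r(i),\sigma_c(k)):(i,k)\in E\}$ is a genuine repetition-free nonempty subset of $[n]\times[d]$, with $P_{G,\sigma_r,\sigma_c}(Y)=Y^{\sigma(E)}$; and since $G$ has no isolated node, the bipartite graph $\sigma(G)$ with edge set $\sigma(E)$ has left- and right-vertex sets exactly $\sigma_r(V)$ and $\sigma_c(W)$ — both recoverable from $\sigma(E)$ alone — and $(\sigma_r,\sigma_c)$ is an isomorphism $G\to\sigma(G)$. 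Consequently $\mathbb{E}_{M_0}[P_{G,\sigma_r,\sigma_c}]=0$ for each $(\sigma_r,\sigma_c)$, hence $\mathbb{E}_{M_0}[P_G]=0$ and $\mathbb{E}_{M_0}[\Psi_{G^{(1)}}]=0$, which is the first assertion.

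For the second assertion I would expand
\[
\mathbb{E}_{M_0}\big[P_{G^{(1)}}P_{G^{(2)}}\big]=\sum_{(\sigma_r^{(1)},\sigma_c^{(1)})}\ \sum_{(\sigma_r^{(2)},\sigma_c^{(2)})}\mathbb{E}_{M_0}\big[Y^{\sigma^{(1)}(E^{(1)})}\,Y^{\sigma^{(2)}(E^{(2)})}\big],
\]
the sums running over injections. Because each $G^{(j)}$ is simple, each variable occurs at most once in each of the two monomials, so the product $Y^{\sigma^{(1)}(E^{(1)})}Y^{\sigma^{(2)}(E^{(2)})}$ has all exponents even if and only if $\sigma^{(1)}(E^{(1)})=\sigma^{(2)}(E^{(2)})$ as sets, which (using again the absence of isolated nodes, so that equal edge sets force equal vertex sets) is the same as $\sigma^{(1)}(G^{(1)})=\sigma^{(2)}(G^{(2)})$ as labelled bipartite graphs with left-vertices in $[n]$ and right-vertices in $[d]$. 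Thus the inner expectation is $\mathbf 1\{\sigma^{(1)}(G^{(1)})=\sigma^{(2)}(G^{(2)})\}$, and $\mathbb{E}_{M_0}[P_{G^{(1)}}P_{G^{(2)}}]$ counts the pairs of labellings $(\sigma^{(1)},\sigma^{(2)})$ producing the same labelled image graph.

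Then I would split into two cases. If $G^{(1)}\neq G^{(2)}$, then since $\mathcal G_{\le D}$ contains no two isomorphic graphs we have $G^{(1)}\not\cong G^{(2)}$; but $\sigma^{(1)}(G^{(1)})\cong G^{(1)}$ and $\sigma^{(2)}(G^{(2)})\cong G^{(2)}$, so no pair can agree, the count is $0$, and $\mathbb{E}_{M_0}[\Psi_{G^{(1)}}\Psi_{G^{(2)}}]=0$. If $G^{(1)}=G^{(2)}=:G=(V,W,E)$, fix a labelling $\sigma^{(1)}$; it determines $H:=\sigma^{(1)}(G)$, and a second labelling $\sigma^{(2)}$ satisfies $\sigma^{(2)}(G)=H$ if and only if (by the no-isolated-node property) $\sigma_r^{(2)}$ is a bijection $V\to\sigma_r^{(1)}(V)$, $\sigma_c^{(2)}$ is a bijection $W\to\sigma_c^{(1)}(W)$, and the pair $\big((\sigma_r^{(1)})^{-1}\!\circ\sigma_r^{(2)},\,(\sigma_c^{(1)})^{-1}\!\circ\sigma_c^{(2)}\big)$ lies in $\mathrm{Aut}(G)$. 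Hence there are exactly $|\mathrm{Aut}(G)|$ valid choices of $\sigma^{(2)}$ for each of the $|\GS_V|\,|\GS_W|=\tfrac{n!}{(n-|V|)!}\,\tfrac{d!}{(d-|W|)!}$ choices of $\sigma^{(1)}$, so $\mathbb{E}_{M_0}[P_G^2]=\mathbb V(G)$ and $\mathbb{E}_{M_0}[\Psi_G^2]=1$. Combining the two cases gives $\mathbb{E}_{M_0}[\Psi_{G^{(1)}}\Psi_{G^{(2)}}]=\mathbf 1\{G^{(1)}=G^{(2)}\}$.

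The Rademacher moment computation and the arithmetic of the normalisation $\mathbb V(G)$ are routine; the one point requiring care — and the only real obstacle — is the repeated use of the no-isolated-node hypothesis to argue that the edge set of an image graph $\sigma(G)$ determines its vertex sets. This is what lets equality of monomials be read off as equality of labelled graphs, and what makes the count of $\sigma^{(2)}$ reduce cleanly to $|\mathrm{Aut}(G)|$; without it the image carries strictly less information than the monomial and the bijection count breaks.
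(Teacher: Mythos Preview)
Your proof is correct and follows essentially the same approach as the paper's: both reduce to the orthonormality of the monomial basis $(Y^S)$ under $\mathbb P_{M_0}$, identify $\mathbb E_{M_0}[P_{G^{(1)},\sigma^{(1)}}P_{G^{(2)},\sigma^{(2)}}]=\mathbf 1\{\sigma^{(1)}(E^{(1)})=\sigma^{(2)}(E^{(2)})\}$, and count the matching pairs of labellings as $|\GS_V|\,|\GS_W|\,|\mathrm{Aut}(G)|=\mathbb V(G)$ when $G^{(1)}=G^{(2)}$. You are simply more explicit than the paper about why the no-isolated-node hypothesis is needed (to recover vertex sets from edge sets and to make the automorphism count go through), and you spell out the first assertion $\mathbb E_{M_0}[\Psi_G]=0$, which the paper leaves implicit.
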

The next lemma build upon the permutation invariance of both $\mathbb{P}_{M_0}$ and $\mathbb{P}_{M\sim \mu}$ to characterize $\mathrm{Adv}_{\leq D}$. 
\begin{lemma}\label{lem:reduction::permutation}
Fix any positive integer $D\in [0,nd]$. Then, we have 
\begin{align}\label{eq:badv}
\mathrm{Adv}^2_{\leq D}= 1 + \sum_{G\in \cG_{\leq D}} \mathbb{E}_{M\sim \mu} [\Psi^2_G] \enspace .
\end{align}
\end{lemma}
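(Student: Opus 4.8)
The plan is to exploit the invariance of both $\mathbb{P}_{M_0}$ and $\mathbb{P}_{M\sim\mu}$ under the row/column permutation group $\Pi_n\times\Pi_d$ to reduce the supremum defining $\mathrm{Adv}_{\leq D}$ to \emph{permutation-invariant} polynomials, and then to invoke Parseval's identity in the orthonormal family $(1,(\Psi_G)_{G\in\cG_{\leq D}})$ of Lemma~\ref{lem:ortho1}. The two ingredients I need are (a) that replacing a polynomial by its average over $\Pi_n\times\Pi_d$ cannot decrease its advantage, and (b) that $(1,(\Psi_G)_{G\in\cG_{\leq D}})$ is not merely orthonormal but actually a \emph{basis} of the space of degree-$\leq D$ invariant polynomials.

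First I would record the symmetrization step. Let $\Gamma=\Pi_n\times\Pi_d$ act on $Y\in\{-1,1\}^{n\times d}$ by permuting rows and columns. The null law $\mathbb{P}_{M_0}$ (i.i.d.\ Rademacher entries) is $\Gamma$-invariant, and so is the mixture $\mathbb{P}_{M\sim\mu}$, since $\mu$ is uniform over $\cH(\lambda,K_n,K_d)$ and this set of matrices is stable under $\Gamma$. Given $f\in\cP_{\leq D}$, set $\bar f=|\Gamma|^{-1}\sum_{g\in\Gamma}f\circ g\in\cP_{\leq D}$. Invariance of $\mathbb{P}_{M\sim\mu}$ gives $\mathbb{E}_{M\sim\mu}[\bar f]=\mathbb{E}_{M\sim\mu}[f]$, while invariance of $\mathbb{P}_{M_0}$ together with convexity of $t\mapsto t^2$ (Jensen applied to the average over $\Gamma$) gives $\mathbb{E}_{M_0}[\bar f^2]\le\mathbb{E}_{M_0}[f^2]$. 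Hence $\mathrm{Adv}(\bar f)\ge\mathrm{Adv}(f)$, so the supremum in $\mathrm{Adv}_{\leq D}=\sup_{f\in\cP_{\leq D}}\mathrm{Adv}(f)$ is attained on the subspace $\cP^{\mathrm{inv}}_{\leq D}$ of $\Gamma$-invariant polynomials of degree $\le D$.

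Next I would check that $(1,(\Psi_G)_{G\in\cG_{\leq D}})$ is an orthonormal basis of $\cP^{\mathrm{inv}}_{\leq D}$ for the inner product $\langle f,h\rangle=\mathbb{E}_{M_0}[fh]$. Orthonormality is exactly Lemma~\ref{lem:ortho1}. For the spanning property: since $Y_{i,k}^2\equiv1$, the monomials $(Y^S)_{S\subset[n]\times[d],\,|S|\le D}$ form a basis of $\cP_{\leq D}$ that $\Gamma$ merely permutes, so every invariant polynomial is a linear combination of the orbit sums $\sum_{S\in\cO}Y^S$. To a nonempty index set $S$ one associates the bipartite graph on the rows and columns it meets, with edge set $S$ and no isolated vertex; two index sets lie in the same $\Gamma$-orbit if and only if the associated graphs are isomorphic, so the $\Gamma$-orbits of $\{S:1\le|S|\le D\}$ are in bijection with $\cG_{\leq D}$, the empty orbit contributing the constant $1$. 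Finally, from the definitions~\eqref{eq:definition:P_G}--\eqref{eqn:variance of graph} and the identity $\mathbb{E}_{M_0}[P_G^2]=\mathbb{V}(G)$ recorded just before~\eqref{eqn:variance of graph}, $P_G$ equals $|\mathrm{Aut}(G)|$ times the orbit sum attached to $G$, and $\Psi_G=P_G/\sqrt{\mathbb{V}(G)}$ is its unit-norm rescaling; hence $(1,(\Psi_G)_{G})$ indeed spans $\cP^{\mathrm{inv}}_{\leq D}$.

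Then I would conclude by Parseval. Expanding a generic $f\in\cP^{\mathrm{inv}}_{\leq D}$ as $f=\alpha_0+\sum_{G\in\cG_{\leq D}}\alpha_G\Psi_G$ in this basis, one has $\mathbb{E}_{M_0}[f^2]=\alpha_0^2+\sum_G\alpha_G^2$ and, using $\mathbb{E}_{M\sim\mu}[1]=1$, $\mathbb{E}_{M\sim\mu}[f]=\alpha_0+\sum_G\alpha_G\,\mathbb{E}_{M\sim\mu}[\Psi_G]$, so the equality case of Cauchy--Schwarz gives
\[
\mathrm{Adv}^2_{\leq D}=\sup_{f\in\cP^{\mathrm{inv}}_{\leq D}}\frac{\big(\mathbb{E}_{M\sim\mu}[f]\big)^2}{\mathbb{E}_{M_0}[f^2]}=1+\sum_{G\in\cG_{\leq D}}\big(\mathbb{E}_{M\sim\mu}[\Psi_G]\big)^2,
\]
which is~\eqref{eq:badv}. (One can also bypass the symmetrization and obtain the same identity directly by regrouping the sum $\sum_{|S|\le D}\mathbb{E}^2_{M\sim\mu}[Y^S]$ of~\eqref{eq:expression_ADV_D} according to $\Gamma$-orbits of $S$, using that $S\mapsto\mathbb{E}_{M\sim\mu}[Y^S]$ is constant on orbits.) I do not expect any serious obstacle here: the only step requiring real care is the orbit bookkeeping in the spanning argument — matching $\Gamma$-orbits of index sets with isomorphism classes of isolated-vertex-free bipartite graphs and verifying that $\mathbb{V}(G)$ is exactly the $\mathbb{P}_{M_0}$-second moment of $P_G$, so that $\Psi_G$ is the normalized orbit sum — which is essentially the content already carried by Lemma~\ref{lem:ortho1}.
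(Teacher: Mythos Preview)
Your proof is correct and follows essentially the same approach as the paper: symmetrize via the group average to reduce to permutation-invariant polynomials (using Jensen under $\mathbb{P}_{M_0}$ and invariance under $\mathbb{P}_{M\sim\mu}$), then apply Cauchy--Schwarz/Parseval in the orthonormal basis $(1,(\Psi_G)_{G\in\cG_{\leq D}})$. If anything, you supply more detail than the paper on why this family actually \emph{spans} the invariant polynomials (the orbit--template bijection), which the paper asserts without justification.
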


\subsubsection{Bound on the worst-case loss for fixed degree polynomials}

As a consequence of Lemma~\ref{lem:reduction::permutation} as well of Lemma~\ref{lem:lower_bound_loss}, we only have to control explicitly $\sum_{G\in \cG_{\leq D}} \mathbb{E}_{M\sim \mu} [\Psi^2_G]$ as a function of $\lambda$, $K_n$, and $K_d$.

\begin{theorem}[Fixed-degree polynomial lower bound]\label{thm:worst_case_loss}
Consider any $(\lambda,K_n, K_d)$ and any positive degree $D\leq nd$. Assume that, for $\overline{c}\geq 10$, we have 
	\begin{align}\label{eq:assump}
\left[ \lambda \sqrt{D\land K_n\land K_d}\right] \lor  {\left(\lambda \frac{K_d}{\sqrt{d}}\right)} \lor {\left(\lambda \frac{K_n}{\sqrt{n}}\right)} \lor\left[\lambda \frac{K_nK_d}{\sqrt{nd}}\right] \leq  2^{-2- \bar c}\enspace ,
\end{align} 
Then, we have
$$ \mathrm{Adv}_{\leq D}^2(f) \leq  1+ 2^{-  \bar c},~~~~~\mathrm{so~that}~~~~~\inf_{f\in \mathcal P_{\leq D}} \sup_{M\in \cH \cup \{M_0\}} \E_M[(f-x_0)^2] \geq \frac{1}{4} [1 - 2^{- \bar c}]\enspace . $$
\end{theorem}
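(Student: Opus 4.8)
The plan is to reduce the theorem, via the two preceding lemmas, to a single combinatorial estimate. By Lemma~\ref{lem:reduction::permutation} we have $\mathrm{Adv}^2_{\le D} = 1+\sum_{G\in\cG_{\le D}}\mathbb E_{M\sim\mu}[\Psi_G^2]$; and Lemma~\ref{lem:lower_bound_loss}, which gives $\sup_{M\in\cH\cup\{M_0\}}\E_M[(f-x_0)^2]\ge\tfrac{1}{2(1+\mathrm{Adv}^2(f))}$ for every $f$, turns any bound $\mathrm{Adv}^2_{\le D}\le 1+2^{-\bar c}$ into the desired $\inf_{f\in\mathcal P_{\le D}}\sup_M\E_M[(f-x_0)^2]\ge\tfrac14(1-2^{-\bar c})$, since $\tfrac{1}{2(1+t)}\ge\tfrac14(1-2^{-\bar c})$ whenever $t\le 1+2^{-\bar c}$. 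So everything reduces to proving
\[
\Sigma\ :=\ \sum_{G\in\cG_{\le D}}\mathbb E_{M\sim\mu}[\Psi_G^2]\ \le\ 2^{-\bar c}
\]
under~\eqref{eq:assump}. Write $\varepsilon:=2^{-2-\bar c}$, so that $\varepsilon\le 2^{-12}$ and~\eqref{eq:assump} says each of $\lambda\sqrt{D\wedge K_n\wedge K_d}$, $\lambda K_d/\sqrt d$, $\lambda K_n/\sqrt n$, $\lambda K_nK_d/\sqrt{nd}$ is at most $\varepsilon$; I will in fact establish the stronger bound $\Sigma = O(\varepsilon^2)$.

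First I would make $\mathbb E_{M\sim\mu}[\Psi_G^2]$ explicit. Fix a template $G=(V,W,E)$ with $r=|V|$, $s=|W|$, $e=|E|$; since $G$ has no isolated node, $\max(r,s)\le e\le\min(rs,D)$. Because $M=\lambda\mathbf 1\{S\times T\}$ under $\mu$, the $Y_{i,k}$ are independent with $\mathbb E_M[Y_{i,k}]=M_{i,k}$, and $\sigma_r,\sigma_c$ are injective, one gets $\mathbb E_M[P_{G,\sigma_r,\sigma_c}]=\lambda^{e}\,\mathbf 1\{\sigma_r(V)\subseteq S\}\,\mathbf 1\{\sigma_c(W)\subseteq T\}$; averaging over $(S,T)\sim\mu$, summing over $\sigma_r\in\GS_V$ and $\sigma_c\in\GS_W$, and dividing by $\sqrt{\mathbb V(G)}$ gives
\[
\mathbb E_{M\sim\mu}[\Psi_G^2]\ =\ \frac{\lambda^{2e}\,\big(K_n!/(K_n-r)!\big)^2\big(K_d!/(K_d-s)!\big)^2}{\big(n!/(n-r)!\big)\big(d!/(d-s)!\big)\,|\mathrm{Aut}(G)|}\ ,
\]
which vanishes unless $r\le K_n$ and $s\le K_d$. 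Using $K_n\le n$, each factor of $\prod_{j=0}^{r-1}\tfrac{(K_n-j)^2}{n-j}$ is $\le K_n^2/n$, and likewise for the columns, so $\mathbb E_{M\sim\mu}[\Psi_G^2]\le (K_n^2/n)^r(K_d^2/d)^s\,\lambda^{2e}/|\mathrm{Aut}(G)|$.

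Next I would move from isomorphism classes to labelled bipartite graphs: on a fixed pair of vertex sets of sizes $r,s$, exactly $r!\,s!/|\mathrm{Aut}(G)|$ labelled graphs are isomorphic to $G$, whence
\[
\Sigma\ \le\ \sum_{r,s\ge 1}\frac{(K_n^2/n)^r(K_d^2/d)^s}{r!\,s!}\sum_{e=\max(r,s)}^{\min(rs,D)}N(r,s,e)\,\lambda^{2e}\ ,
\]
where $N(r,s,e)$ is the number of $e$-element subsets of $[r]\times[s]$ meeting every row and every column, so $N(r,s,e)\le\binom{rs}{e}\le(\mathrm e\,rs/e)^e$. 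The one genuinely load-bearing point sits here: \emph{no isolated node} forces $e\ge\max(r,s)$, hence $rs/e\le\min(r,s)$, so $N(r,s,e)\lambda^{2e}\le q^e$ with $q:=\mathrm e\,\lambda^2\min(r,s)$; since in every nonzero term $\min(r,s)\le D\wedge K_n\wedge K_d$, the first inequality of~\eqref{eq:assump} makes $q\le\mathrm e\varepsilon^2<\tfrac12$, and the geometric sum over $e$ is $\le 2q^{\max(r,s)}$.

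Finally I would split the double sum according to $r\le s$ versus $r>s$. In the first regime $\min(r,s)=r$, $\max(r,s)=s$, so the inner sum over $s$ is $\sum_{s\ge r}(\mathrm e\lambda^2 r\,K_d^2/d)^s/s!$; using $\lambda^2K_d^2/d\le\varepsilon^2$ (second inequality of~\eqref{eq:assump}) and elementary bounds such as $\sum_{s\ge r}x^s/s!\le(x^r/r!)\mathrm e^{x}$ and $r!\ge(r/\mathrm e)^r$, the whole thing telescopes into $2\sum_{r\ge 1}\tfrac1{r!}\big(\mathrm e^{2+\mathrm e\varepsilon^2}\,\lambda^2K_n^2K_d^2/(nd)\big)^r=2(\mathrm e^{y}-1)$ with $y=\mathrm e^{2+\mathrm e\varepsilon^2}\big(\lambda K_nK_d/\sqrt{nd}\big)^2\le\mathrm e^{2+\mathrm e\varepsilon^2}\varepsilon^2$ by the fourth inequality of~\eqref{eq:assump}; since $\varepsilon\le 2^{-12}$ this is $O(\varepsilon^2)$. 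The regime $r>s$ is identical after exchanging $(n,K_n)\leftrightarrow(d,K_d)$ and using the third inequality of~\eqref{eq:assump}. Summing the two contributions gives $\Sigma\le C\varepsilon^2\le 2^{-\bar c}$ for an absolute constant $C$, because $\varepsilon=2^{-2-\bar c}$ is small — this is the only use of $\bar c\ge 10$, which merely buys the slack $C\varepsilon^2\le 4\varepsilon$. The step I expect to be most delicate is the third one: one must choose a bound on $N(r,s,e)$ crude enough to keep the sums tractable yet still exploiting \emph{no isolated node} (without it, terms like $K_n^r/r!$ diverge), and then organise the nested summation so that each of the four conditions in~\eqref{eq:assump} gets consumed in exactly the right place.
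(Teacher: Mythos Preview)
Your proof is correct and takes a genuinely different route from the paper's.

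The paper decomposes every template $G$ into its connected components and then controls each connected piece separately: it proves an explicit count for connected bipartite graphs with prescribed $(r,s,e)$ via a spanning-tree argument (Lemma~\ref{lem:upper:bound:N}), uses the connectivity inequality $e\ge r+s-1$ to factor the contribution as
\[
2^4\cdot\big[2^4(D\wedge K_n\wedge K_d)\lambda^2\big]^{e-(r+s-1)}\big[2^4\lambda^2K_d^2/d\big]^{s-1}\big[2^4\lambda^2K_n^2/n\big]^{r-1}\cdot\lambda^2(K_nK_d/\sqrt{nd})^2,
\]
and finally reassembles the pieces via two further counting lemmas (Lemmas~\ref{lem:countedges} and~\ref{lem:boundS}). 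You instead bypass the connected-component decomposition entirely: the orbit-counting identity $\sum_{G}1/|\mathrm{Aut}(G)|=N(r,s,e)/(r!\,s!)$ lets you replace isomorphism classes by labelled graphs, and the crude bound $N(r,s,e)\le\binom{rs}{e}\le(\mathrm e\,rs/e)^e$ together with only the weaker constraint $e\ge\max(r,s)$ (no isolated node) suffices, because $rs/e\le\min(r,s)$ turns the edge sum into a geometric series controlled by the first inequality in~\eqref{eq:assump}. Your approach is shorter and more elementary (three auxiliary lemmas are avoided); the paper's approach has the advantage that the factorisation displayed above makes it visually transparent which of the four terms in~\eqref{eq:assump} absorbs which exponent, whereas in your argument the four conditions enter at scattered places (geometric tail, bounding $e^x$, the symmetric case, and the final exponential series). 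Both arguments ultimately rely on the same mechanism — the absence of isolated vertices is the one structural fact that makes the combinatorics summable — but they organise the bookkeeping quite differently.
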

The proof of this theorem is postponed to the appendix. As discussed previously, two specific choices of $D$ are of particular interest: $D=nd$ for establishing a minimax lower bound and the case $D$ of the order of $\log(nd)$ which, according to the low-degree conjecture, provides a strong indication to rule out all polynomial-time algorithms. We would like to emphasize that these lower bounds results are proven on the sub-model $\cH$, and that they are then a fortiori valid for the larger $\overline{\cH}$ that are more akin to ranking.

\begin{corollary}[Minimax lower bound]
		Assume that, for $\overline{c}\geq 10$, we have 
	\begin{align}\label{eq:assump2}
\left[\lambda\sqrt{K_n\land K_d}  \right] \lor  {\left(\lambda \frac{K_d}{\sqrt{d}}\right)} \lor {\left(\lambda \frac{K_n}{\sqrt{n}}\right)} \lor\left[\lambda \frac{K_nK_d}{\sqrt{nd}}\right] \leq  2^{-2- \bar c}\enspace . 
\end{align} 
This implies that 
$$\inf_{f~\mathrm{measurable~in~}Y}  \sup_{M\in \cH \cup \{M_0\}} \E_M[(f-x_0)^2] \geq \frac{1}{4} [1 - 2^{- \bar c}]\enspace .$$
\end{corollary}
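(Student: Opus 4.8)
The plan is to obtain the corollary as the special case $D = nd$ of Theorem~\ref{thm:worst_case_loss}, together with the elementary fact that, over a binary alphabet, polynomials of degree $nd$ already exhaust all functions.

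First I would check that hypothesis~\eqref{eq:assump2} is literally hypothesis~\eqref{eq:assump} with $D = nd$. Since $K_n \le n$ and $K_d \le d$, we have $K_n \wedge K_d \le K_n \le n \le nd$, hence $D \wedge K_n \wedge K_d = K_n \wedge K_d$ when $D = nd$, so the term $\lambda\sqrt{D \wedge K_n \wedge K_d}$ in~\eqref{eq:assump} becomes $\lambda\sqrt{K_n \wedge K_d}$, while the three other terms in~\eqref{eq:assump} do not depend on $D$. Thus~\eqref{eq:assump2} and~\eqref{eq:assump}$|_{D=nd}$ coincide, and Theorem~\ref{thm:worst_case_loss} (with $\overline c \ge 10$) applies and gives $\mathrm{Adv}^2_{\le nd} \le 1 + 2^{-\bar c}$, and consequently
$$\inf_{f\in \mathcal P_{\le nd}} \sup_{M\in \cH \cup \{M_0\}} \E_M[(f-x_0)^2] \ \ge\ \tfrac14\bigl[1 - 2^{-\bar c}\bigr]\enspace .$$

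It then remains only to upgrade "infimum over degree-$nd$ polynomials" to "infimum over all measurable functions of $Y$". This is exactly the remark already made in this subsection: since each $Y_{i,k} \in \{-1,1\}$ we have $Y_{i,k}^2 = 1$, so the multilinear monomials $(Y^S)_{S \subseteq [n]\times[d]}$ — every one of which has degree at most $nd$ — form a basis of the space of all real-valued functions on $\{-1,1\}^{n\times d}$ (the Fourier–Walsh expansion). Hence $\mathcal P_{\le nd}$ is the full space of measurable functions of $Y$, the two infima are equal, and combining with the display above yields the announced bound
$$\inf_{f~\mathrm{measurable~in~}Y} \sup_{M\in \cH \cup \{M_0\}} \E_M[(f-x_0)^2] \ \ge\ \tfrac14\bigl[1 - 2^{-\bar c}\bigr]\enspace .$$

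I do not expect any genuine obstacle in this corollary: all the work is concentrated in Theorem~\ref{thm:worst_case_loss}, whose proof must bound $\sum_{G\in\cG_{\le D}}\E_{M\sim\mu}[\Psi_G^2]$ uniformly in the regime $D = nd$, where no truncation of the bipartite-graph templates is available and one must control the contribution of \emph{all} connected templates with at most $nd$ edges. The only thing the corollary adds is the observation that, because the observations live on a finite alphabet, the degree-$O(\log)$ low-degree lower bound is of the \emph{same form} as the information-theoretic one — one simply pushes $D$ up to $nd$ — so the minimax statement is free once Theorem~\ref{thm:worst_case_loss} is in hand.
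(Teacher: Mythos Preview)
Your proposal is correct and matches the paper's own argument: the corollary is precisely Theorem~\ref{thm:worst_case_loss} specialized to $D=nd$, together with the observation (made just before~\eqref{eq:expression_ADV_D}) that on a $\{-1,1\}$ alphabet every measurable function of $Y$ is a polynomial of degree at most $nd$. Your verification that $D\wedge K_n\wedge K_d = K_n\wedge K_d$ when $D=nd$ is exactly the missing step that makes \eqref{eq:assump2} coincide with \eqref{eq:assump}.
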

Here,~\eqref{eq:assump2} matches (up to numerical constants) the sufficient condition~\eqref{eq:condition_minimax_detection} for detection and is therefore optimal. Note that an asymptotic lower bound with tight constants (in some regimes) is provided in~\cite{butucea2013detection}, but this requires more delicate arguments than that of Lemma~\ref{lem:lower_bound_loss}.

\begin{corollary}[LD polynomial lower bound]
		Assume that for $\bar c \geq 10$. 
	\begin{align}\label{eq:assump3}
 \lambda \lor {\left(\lambda \frac{K_d}{\sqrt{d}}\right)} \lor {\left(\lambda \frac{K_n}{\sqrt{n}}\right)} \lor\left[\lambda \frac{K_nK_d}{\sqrt{nd}}\right] \leq  2^{-2- 2\bar c} \log(nd)^{-1/2}\enspace ,
\end{align} 
This enforces 
$$\inf_{f\in \mathcal P_{\leq 2^{2\bar c} \log(nd)}}  \sup_{M\in \cH \cup \{M_0\}} \E_M[(f-x_0)^2] \geq \frac{1}{4} [1 - 2^{- \bar c}]\enspace .$$
\end{corollary}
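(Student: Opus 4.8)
The plan is to derive this corollary directly from Theorem~\ref{thm:worst_case_loss} by instantiating its degree parameter at the low-degree threshold $D\asymp 2^{2\bar c}\log(nd)$ and checking that Assumption~\eqref{eq:assump3} implies Assumption~\eqref{eq:assump} for that $D$, with the \emph{same} constant $\bar c$.

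Concretely, I would set $D:=\lfloor 2^{2\bar c}\log(nd)\rfloor \land (nd)$. Since a polynomial has integer degree, $\mathcal P_{\leq 2^{2\bar c}\log(nd)}$ and $\mathcal P_{\leq D}$ describe the same set of functions of $Y$: if $\lfloor 2^{2\bar c}\log(nd)\rfloor\leq nd$ this is immediate, and otherwise both classes contain every monomial $Y^{S}$ with $S\subseteq[n]\times[d]$ and hence coincide with the set of all functions of $Y\in\{-1,1\}^{n\times d}$. One also checks $1\leq D\leq nd$ using $\bar c\geq 10$ (and $nd\geq 3$, which is harmless to assume, so that $\log(nd)\geq 1$). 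Thus it suffices to establish the claimed lower bound with $\mathcal P_{\leq D}$ in place of $\mathcal P_{\leq 2^{2\bar c}\log(nd)}$, and for that I would apply Theorem~\ref{thm:worst_case_loss} at degree $D$.

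It then remains to verify~\eqref{eq:assump} at degree $D$. The only new point compared to the minimax case is the term $\lambda\sqrt{D\land K_n\land K_d}$: since $D\leq 2^{2\bar c}\log(nd)$ one gets $\lambda\sqrt{D\land K_n\land K_d}\leq\lambda\sqrt{D}\leq \lambda\,2^{\bar c}\sqrt{\log(nd)}$, and substituting the bound $\lambda\leq 2^{-2-2\bar c}\log(nd)^{-1/2}$ from~\eqref{eq:assump3} yields exactly $\lambda\sqrt{D\land K_n\land K_d}\leq 2^{-2-\bar c}$. For the other three terms, \eqref{eq:assump3} directly bounds each of $\lambda K_d/\sqrt d$, $\lambda K_n/\sqrt n$, and $\lambda K_nK_d/\sqrt{nd}$ by $2^{-2-2\bar c}\log(nd)^{-1/2}$, which is at most $2^{-2-\bar c}$ because $2^{-\bar c}\log(nd)^{-1/2}\leq 1$. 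Hence~\eqref{eq:assump} holds for the degree $D$ and the same $\bar c$, Theorem~\ref{thm:worst_case_loss} gives $\inf_{f\in\mathcal P_{\leq D}}\sup_{M\in\cH\cup\{M_0\}}\E_M[(f-x_0)^2]\geq \tfrac14[1-2^{-\bar c}]$, and rewriting $\mathcal P_{\leq D}$ as $\mathcal P_{\leq 2^{2\bar c}\log(nd)}$ finishes the argument.

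There is no genuine obstacle here, since the corollary is just Theorem~\ref{thm:worst_case_loss} read at one specific degree; the step demanding the most care is the bookkeeping, namely capping $D$ at $nd$ (recalling that above that degree the polynomial class is already the class of all functions of $Y$) and observing that the extra factor $\log(nd)^{-1/2}$ in~\eqref{eq:assump3} is precisely calibrated to absorb $\sqrt D\leq 2^{\bar c}\sqrt{\log(nd)}$ without any loss in the constant $\bar c$. Under the low-degree conjecture, ruling out all polynomials of degree $O(\log(nd))$ in this way is the standard evidence for a computational--statistical gap in the regime where~\eqref{eq:condition_minimax_detection} holds but~\eqref{eq:assump3} fails.
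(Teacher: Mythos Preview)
Your proposal is correct and matches the paper's approach: the corollary is stated in the paper without a separate proof, being an immediate specialization of Theorem~\ref{thm:worst_case_loss} at degree $D=2^{2\bar c}\log(nd)$, and your verification that~\eqref{eq:assump3} implies~\eqref{eq:assump} at that degree (in particular the calibration $\lambda\sqrt{D}\leq 2^{-2-\bar c}$) is exactly the intended one-line check. The extra bookkeeping you add about capping $D$ at $nd$ and the integer-degree convention is careful and harmless.
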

This result enforces, that in the low-degree polynomial framework, up to logarithms, the aggregation of the global-sum, row-sum, and the column-sum test $f_{\mathrm{gs}}\vee f_{\mathrm{rs}}\vee f_{\mathrm{rs}}$ is nearly optimal. In the square-case $n=d$, a tighter result is already established in~\cite{KuniskyWeinBandeira}. Also for square matrices, other works~\cite{ma2015computational,brennan2019universality} have also established the optimality of Condition~\eqref{eq:assump3} by a careful reduction to the planted clique conjecture. However, it seems that one current limitation of reduction-to-planted clique techniques, is that they do not allow to handle highly rectangular matrices.

Theorem~\ref{thm:worst_case_loss} can also be used to establish impossibility results for other problems such that of reconstructing the matrix $M$ or that ranking $M$. Unfortunately, the corresponding lower bounds are not always tight because, in some regimes, detection is easier than estimation. We come back to this in the next section.

\subsection{Global separation distance over $\overline{\bbC}_{\iso}$.}

In this subsection, we simply combine our analysis of the submatrix detection problems with the reduction argument of Section~\ref{sec:peeling} - to characterize the \emph{minimax separation distance} in Frobenius norm for testing $M=M_0$ in the general crowd-sourcing model $M\in \overline{\bbC}_{\iso}$. Given $\rho>0$, define the collection of permuted isotonic matrices whose Frobenius norm is at least $\rho$. 
$$
\overline{\bbC}_{\iso}(\rho) =\{ M\in \overline{\mathbb{C}}_{\iso},\,\mathrm{and}~\|M\|_F \geq \rho \}\enspace .
$$
From this, we can characterize the \textit{separation distance} $\rho_\epsilon(f)$ that is needed by $f$ to guarantee a worst-case loss of at most $\epsilon>0$ over $ \{M_0\} \cup \overline{\bbC}_{\iso}(\rho)$ in the estimation of $x_0$
$$\rho_\epsilon(f) = \inf\left\{\rho>0: \sup_{M \in  \{M_0\} \cup \overline{\bbC}_{\iso}(\rho)} \E_M[(f-x_0)^2] \leq \epsilon\right\}\enspace .$$
Taking e.g.~if $n\geq d$, the submatrix model with $K_n = \sqrt{n}\sqrt{D\land \sqrt{d}}, K_d = \sqrt{d}, \lambda = 2^{-24}/\sqrt{D \land \sqrt{d}}$, Theorem~\ref{thm:worst_case_loss} implies that, for any $D\geq 1$ and any $\epsilon<1/4$, we have 
$$\inf_{f\in \mathcal P_{\leq D}}\rho^2_\epsilon(f) \geq c\frac{\sqrt{nd}}{\sqrt{D \land \sqrt{n \land d}}} \enspace ,$$
where $c>0$ is a positive numerical constant.

Conversely, for a fixed $m$, we  consider dyadic collection $\mathcal{K}_n:=\{1,2,4,\ldots,2^{\floor \log_2(n)\rfloor}\}$ and $\mathcal{K}_d:=\{1,2,4,\ldots,2^{\floor \log_2(d)\rfloor}\}$ for $K_n$ and $K_d$ and compute the aggregated statistic $f_m=\vee_{K_n\in \mathcal{K}_n, K_d\in \mathcal{K}_d}f_{m,K_n,K_d}$ with $\delta\leq \epsilon/4$. Then, combining Corollary~\ref{cor:est} and Lemma~\ref{lem:peeling}, we deduce that $f_m$ (computable with less than  $c\log^2(nd)(nd)^m$ elementary operations), satisfies
$$\rho^2_\epsilon(f_m) \leq c'\frac{\sqrt{nd}}{\sqrt{m\land \sqrt{n\land d}}} \log^2(1/\delta) \log^3_2(nd)\enspace ,$$
where $c'$ is a positive numerical constant.

\medskip

This highlights that, contrary to problem of ranking  - as we will see later in Sections~\ref{sec:rankingrest} and~\ref{sec:ranking} - there is evidence for a statistical-computational gap for detection in $\overline{\bbC}_{\iso}$. In particular, the optimal separation required by a measurable estimator is of order $\sqrt{nd}/(n\land d)^{1/4}$ up to logarithmic terms in $n,d$ while we established a LD polynomial lower bound of the order of  $\sqrt{nd}$ up to logarithmic terms in $n,d$ (based on the lower bound for $D$ of order $\log(nd)$).

\section{Ranking the experts: warm-up over submatrix estimation}\label{sec:rankingrest}

As an intermediary step between detecting the non-nullity of $M$ and ranking general permuted isotonic matrices in $\overline{\mathbb C}_{\mathrm{iso}}$, we restrict our attention in this section to estimation problems for block matrices belonging to collections $\cH(\lambda,K_n,K_d)$. For such matrices, estimating  a permutation $\pi^*$ that is compatible with $M$ is equivalent to selecting the non-zero rows/experts of $M$. Although this slightly departs from the classical submatrix localization/estimation problem~\cite{hajek2018submatrix,butucea2015sharp,cai2017computational} where the objective is to recover the matrix $M$, most of the techniques are quite similar --we shall further discuss this at the end of the section.   

Without loss of generality, we focus on the first row and we consider  the problem of estimating the function $x^{\star}$ defined by 
$$x^{\star} := \mathbf 1\{\exists k: M_{1,k} >0 \}\ . $$
It is one if and only if the first rows belongs to the hidden block of $M$. As in the previous case, we consider the quadratic risk $\mathbb E_M[(f(Y) - x^{\star})^2]$.

\subsection{Estimators and upper bounds.} 

Fix a tuning parameter $\delta\in (0,1/2)$. Since estimating $x^\star$ amounts to testing whether the first row is non-zero, a first simple estimator computes the row sums of $M$ and selects those that are unusually large. Define
$f_{SE} = \mathbf 1\{i\in [n]: \sum_{k=1}^d Y_{1,k} \geq \sqrt{2d\log(n/\delta)}\}$. An alternative strategy amounts to first selecting relevant columns with unusually large entries and then to use them to detect the nullity of $M$. 
\[
\hat S_Q = \left\{k\in [d]: \sum_{i=2}^n Y_{i,k} \geq \sqrt{2n\log(d/\delta)}\right\}\quad \quad \quad f_{SE,2}= \1\left\{\sum_{k\in \hat S_Q} Y_{1,k} \geq \sqrt{2|\hat S_Q|\log(n/\delta)}\right\}\enspace . 
\]
Finally, similarly to the scan test introduced in the previous section, we can  select relevant questions by summing over submatrices. Fix $m>0$ a positive integer.
\[
\hat S_Q^{(m)}\in  \arg\max_{T\subset [d]: |T|=m}\{\max_{S\subset [2,n], \ |S|= m} \sum_{(i,k)\in S\times T}Y_{i,j}\}\ ; \quad \quad  f^{(m)}_{SE,2}= \1\left\{\sum_{k\in \hat S^{(m)}_Q} Y_{1,k} \geq \sqrt{2|\hat S^{(m)}_Q|\log(n/\delta)}\right\}\enspace . 
\]

\begin{proposition}\label{prop:ubestM}
    Consider any $(\lambda,K_n,K_d)$ and any $M \in \cH(\lambda,K_n,K_d)$. 
	\begin{enumerate}
		\item If $\lambda \frac{K_d}{\sqrt{d}} \geq 2\sqrt{2\log(n/\delta)}$,then $\E_{M}[(f_{SE}-x^{\star})^2] \leq \delta$.
		\item If $K_n\geq 2$, $\lambda \frac{K_n-1}{\sqrt{n}} \geq 2\sqrt{2\log(d/\delta)}$, and $\lambda \sqrt{K_d} \geq 2\sqrt{2\log(n/\delta)}$, then $\E_{M}[(f_{SE,2}-x^{\star})^2] \leq 2\delta$.
		\item If $K_n\geq 2$, $\lambda\sqrt{m} \geq 4\sqrt{2\log(nd/\delta)}$ then $\E_{M}[(f_{SE,2}^{(m)}-x^{\star})^2] \leq 2\delta$, for $m \leq (K_n-1) \land K_d$.
	\end{enumerate}
\end{proposition}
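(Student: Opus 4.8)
The plan is to control each of the three estimators separately using Bernstein's inequality on sums of independent bounded random variables, exactly as in the proof of Proposition~\ref{thm:UBtest}. Throughout, write $M \in \cH(\lambda,K_n,K_d)$ with hidden block $S\times T$, $|S| = K_n$, $|T| = K_d$. Recall that each $Y_{i,k}$ takes values in $\{-1,1\}$ with $\E_M[Y_{i,k}] = M_{i,k}$ and $\Var_M(Y_{i,k}) = 1 - M_{i,k}^2 \leq 1$. The key elementary fact, used repeatedly, is that for a sum $Z = \sum_{j} Y_j$ of $N$ independent Rademacher-type variables with mean zero, $\Prob(Z \geq t) \leq \exp(-t^2/(2N))$ (sub-Gaussian tail), while if the means are nonnegative the tail is only shifted to the right. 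Setting $t = \sqrt{2N\log(1/\delta)}$ gives failure probability at most $\delta$; the various $\log(n/\delta)$, $\log(d/\delta)$, $\log(nd/\delta)$ factors come from union bounds over the $n$ rows, $d$ columns, or $\binom{n}{m}\binom{d}{m} \leq (nd)^m$ submatrices respectively.

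\textit{Part (1).} When $x^\star = 0$ (row $1 \notin S$), $\sum_{k=1}^d Y_{1,k}$ is a sum of $d$ mean-zero Rademachers, so it exceeds $\sqrt{2d\log(n/\delta)}$ with probability at most $\delta/n \leq \delta$. When $x^\star = 1$ (row $1 \in S$), we have $\E_M[\sum_{k=1}^d Y_{1,k}] = \sum_{k\in T} M_{1,k} = \lambda K_d$; under the hypothesis $\lambda K_d/\sqrt d \geq 2\sqrt{2\log(n/\delta)}$ the threshold $\sqrt{2d\log(n/\delta)}$ is at most half the mean, so a one-sided Bernstein (or Hoeffding) bound on the centered sum gives deviation probability at most $\exp(-\lambda^2K_d^2/(8d)) \leq \delta$. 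Combining, $\E_M[(f_{SE}-x^\star)^2]\leq \delta$.

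\textit{Part (2).} The analysis splits into a ``column-selection'' event and a ``detection'' event. First, condition on the rows $2,\dots,n$, which determine $\hat S_Q$. For a column $k \in T$, $\sum_{i=2}^n Y_{i,k}$ has mean at least $\lambda(K_n - 1)$ (the rows in $S\setminus\{1\}$ contribute, and there are at least $K_n-1$ of them); under the assumption $\lambda(K_n-1)/\sqrt n \geq 2\sqrt{2\log(d/\delta)}$, a Bernstein bound shows each such column lies in $\hat S_Q$ except with probability $\delta/d$, so by a union bound $T \subset \hat S_Q$ with probability at least $1-\delta$. (Columns outside $T$ may or may not be selected, but that only helps or is neutral — see below.) Second, on this event, consider $\sum_{k\in\hat S_Q} Y_{1,k}$, which is independent of the rows $2,\dots,n$ hence of $\hat S_Q$ conditionally. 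If $x^\star=0$, this is a sum of $|\hat S_Q|$ mean-zero Rademachers, exceeding $\sqrt{2|\hat S_Q|\log(n/\delta)}$ with conditional probability $\leq\delta/n$; actually one must union-bound over the (at most $2^d$, but really at most $d$ relevant) values of $\hat S_Q$ — this is handled by noting $|\hat S_Q|\leq d$ and using the threshold $\sqrt{2|\hat S_Q|\log(n/\delta)} \geq \sqrt{2|\hat S_Q|\log(n)}$, or more cleanly by a peeling/union bound over the possible cardinalities, costing only another $\delta$. If $x^\star=1$, then since $T\subset\hat S_Q$, the mean of $\sum_{k\in\hat S_Q}Y_{1,k}$ is at least $\lambda K_d = \lambda|T|$, and since $|\hat S_Q|$ could be as large as $d$ we need the threshold $\sqrt{2|\hat S_Q|\log(n/\delta)}$ to be dominated by $\lambda K_d$; this is \emph{not} immediate, so here is the subtlety — one must additionally control $|\hat S_Q|$ from above. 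But under $\P_{M_0}$-type tails, the number of spurious columns selected outside $T$ is at most, with high probability, of order $\ldots$; more simply, the cleanest route is to observe that the threshold is taken with the \emph{random} $|\hat S_Q|$, and the hypothesis $\lambda\sqrt{K_d}\geq 2\sqrt{2\log(n/\delta)}$ combined with $|\hat S_Q|\geq K_d$ gives $\lambda K_d \geq \lambda\sqrt{K_d}\sqrt{|\hat S_Q|}\cdot\sqrt{K_d/|\hat S_Q|}$ — this does not close unless $|\hat S_Q| = O(K_d)$. I would therefore add the step of bounding $|\hat S_Q|$: outside $T$, each of the $d - K_d$ columns enters $\hat S_Q$ with probability $\leq \delta/d$, so $\E[|\hat S_Q\setminus T|]\leq\delta$, and a concentration/Markov argument gives $|\hat S_Q|\leq K_d + O(\log(1/\delta))$ or $|\hat S_Q| \leq 2K_d$ on a further $1-\delta$ event (absorbing constants); then $\lambda\sqrt{|\hat S_Q|\log(n/\delta)}\leq\lambda\sqrt{2K_d\log(n/\delta)}\leq\lambda K_d$ by the hypothesis. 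Collecting the $O(1)$ bad events gives total error $\leq 2\delta$.

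\textit{Part (3).} The argument is identical in structure to Part (2), with the column set $\hat S_Q^{(m)}$ now of fixed size exactly $m \leq (K_n-1)\land K_d$. First, the scan step: the maximizing pair $(S,T)$ over $|S|=|T|=m$ with $S\subset[2,n]$ achieves sum at least that of any fixed $m$-subset of $(S^\star\setminus\{1\})\times T^\star$, whose mean is $\lambda m^2$; a union bound over the $\binom{n-1}{m}\binom{d}{m}\leq(nd)^m$ choices via Bernstein, using $\lambda m^2 \gtrsim m\sqrt{2m\log(nd/\delta)}$ i.e.\ $\lambda\sqrt m\gtrsim\sqrt{2\log(nd/\delta)}$ (with a factor of $4$ to beat both the fluctuation of the true block and the scan union bound), shows that with probability $\geq 1-\delta$ the selected $\hat S_Q^{(m)}$ is contained in $T^\star$ — here one argues that any $T$ not contained in $T^\star$ has all its row-sums (over any $S\subset[2,n]$) sub-Gaussian with deviation $\lesssim m\sqrt{2m\log(nd/\delta)} < \lambda m^2$, so it cannot be the maximizer. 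Since now $|\hat S_Q^{(m)}| = m$ is deterministic, the detection step is immediate: if $x^\star=0$, $\sum_{k\in\hat S_Q^{(m)}}Y_{1,k}$ is a mean-zero sum of $m$ Rademachers exceeding $\sqrt{2m\log(n/\delta)}$ with probability $\leq\delta/n$ (union-bounding over the $\binom d m$ possible supports absorbs into the $\log(nd/\delta)$ already paid, or one re-does the scan-conditioning carefully); if $x^\star=1$, the mean is $\lambda m \geq \lambda\sqrt m\cdot\sqrt m \geq 2\sqrt{2\log(nd/\delta)}\sqrt m \geq 2\sqrt{2m\log(n/\delta)}$, twice the threshold, so Bernstein on the centered sum gives deviation probability $\leq\delta$. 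Summing the two bad events yields $\E_M[(f_{SE,2}^{(m)}-x^\star)^2]\leq 2\delta$.

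\textit{Main obstacle.} The routine part is the Bernstein/union-bound bookkeeping. The genuinely delicate point is in Part (2): the threshold in $f_{SE,2}$ uses the \emph{data-dependent} cardinality $|\hat S_Q|$, which is only guaranteed to be $\geq K_d$; to make the ``signal exceeds threshold'' side work under the stated hypothesis $\lambda\sqrt{K_d}\geq 2\sqrt{2\log(n/\delta)}$ one must show $|\hat S_Q|$ is not much larger than $K_d$ — equivalently, that the column-selection step does not admit too many false positives. This requires a separate concentration argument on $|\hat S_Q\setminus T^\star|$ under $\P_M$, which I expect to be where the real work lies; everything else is standard sub-Gaussian tail estimates plus a final union bound collecting an $O(1)$ number of $\delta$-probability bad events.
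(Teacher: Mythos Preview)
Your Part (1) matches the paper.

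In Part (2), your ``main obstacle'' is illusory. You already observe that each column $k\notin T$ enters $\hat S_Q$ with probability at most $\delta/d$; a plain union bound over the at most $d$ such columns gives $\hat S_Q \subset T$ with probability at least $1-\delta$, and combined with $T\subset\hat S_Q$ (your first step) you get $\hat S_Q = T$ exactly, hence $|\hat S_Q|=K_d$ on that event. No separate concentration on $|\hat S_Q\setminus T|$ is needed --- this is precisely the paper's route, and the detection step then closes immediately under $\lambda\sqrt{K_d}\geq 2\sqrt{2\log(n/\delta)}$.

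In Part (3) there is a genuine gap: your claim that the maximizer $\hat S_Q^{(m)}$ must be contained in $T$ is not provable under the stated condition, and the supporting sentence (``any $T'$ not contained in $T$ has all its row-sums small'') is false. If $T'$ shares $m-1$ columns with $T$, then over an $m$-subset $S_n\subset S\setminus\{1\}$ the sum has mean $\lambda m(m-1)$; separating this from the true-block mean $\lambda m^2$ against a fluctuation of order $m\sqrt{m\log(nd/\delta)}$ would require $\lambda \geq c\sqrt{m\log(nd/\delta)}$, which is far stronger than the assumed $\lambda\sqrt m \geq 4\sqrt{2\log(nd/\delta)}$. The paper instead proves only the weaker conclusion $|\hat S_Q^{(m)}\cap T|\geq m/2$ with probability at least $1-\delta$: it compares the true $m\times m$ block (mean $\lambda m^2$) against any $(S_n,S_d)$ with $|S_d\cap T|\leq m/2$ (mean at most $\lambda m^2/2$), and the gap $\lambda m^2/2$ exceeds twice the fluctuation $m\sqrt{2m\log(nd/\delta)}$ exactly when $\lambda\sqrt m\geq 4\sqrt{2\log(nd/\delta)}$ --- this is the origin of the factor $4$. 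On that event, when $x^\star=1$ the mean of $\sum_{k\in\hat S_Q^{(m)}}Y_{1,k}$ is only guaranteed to be at least $\lambda m/2$ rather than $\lambda m$, but the same factor $4$ absorbs this halving in the final Hoeffding step.
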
 
This proposition is proved in the appendix. Although $f_{SE}$ and $f_{SE,2}$ have a linear complexity, this is not the case of $f_{SE,2}^{(m)}$ whose naïve implementation requires of the order $(nd)^{m}$ operations. 
\begin{corollary}\label{cor:UB:estimation}
Fix any positive integer $m\leq n\land d$ and assume that
\begin{align}\label{eq:upper_bound_estimation}
\left[\left(\lambda \frac{K_n}{\sqrt{n}}\right) \land (\lambda\sqrt{K_d})\right]\lor \left[\lambda \frac{K_d}{\sqrt{d}} \right] \lor  \left[\lambda \sqrt{m\land (K_n-1)\land K_d} \right]\geq 2\sqrt{2\log(nd/\delta)}\enspace .
\end{align} 
The estimator $f_{\star}^{(m)} = f_{SE} \lor f_{SE,2} \lor  f_{SE,2}^{(m\land K_n\land K_d)}$ satisfies
\[
\sup_{M\in \cH(\lambda,K_n,K_d)}\E_{M}[(f_{\star}^{(m)}-x^{\star})^2] \leq 5\delta.
\]
\end{corollary}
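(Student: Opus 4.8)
The plan is to exploit that $f_{\star}^{(m)}$ is the logical disjunction (pointwise maximum) of the three $\{0,1\}$-valued statistics $f_{SE}$, $f_{SE,2}$ and $f_{SE,2}^{(m')}$ with $m':=m\land K_n\land K_d$, and to reduce the quadratic risk to the usual type-I/type-II split. Since $x^{\star}\in\{0,1\}$ and each of these tests takes values in $\{0,1\}$, we have $(f_{\star}^{(m)}-x^{\star})^2=f_{\star}^{(m)}\le f_{SE}+f_{SE,2}+f_{SE,2}^{(m')}$ on the event $\{x^{\star}=0\}$ (a union bound), and $(f_{\star}^{(m)}-x^{\star})^2=1-f_{\star}^{(m)}\le 1-f_j$ on $\{x^{\star}=1\}$ for every individual test $f_j$, because a single test firing forces the disjunction to fire. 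So it suffices to bound, first, the three type-I probabilities $\E_M[f_j]$ when the first row of $M$ lies outside the planted block, and second, the type-II error $\E_M[1-f_j]$ of a single well-chosen $j$ when the first row lies inside it.

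\emph{First regime (first row outside the block, so $x^{\star}=0$).} Here $(Y_{1,k})_{k\in[d]}$ are i.i.d.\ Rademacher and independent of the column-selection steps, which use only rows $2,\dots,n$. Conditioning on the selected column set, each of $f_{SE}$, $f_{SE,2}$, $f_{SE,2}^{(m')}$ becomes the indicator that a sum of $N\le d$ independent Rademacher variables exceeds $\sqrt{2N\log(n/\delta)}$, which by Hoeffding's inequality has probability at most $\delta/n\le\delta$; this is exactly the ``$M=M_0$'' content of Proposition~\ref{prop:ubestM}. Summing the three contributions gives $\E_M[(f_{\star}^{(m)}-x^{\star})^2]\le 3\delta\le 5\delta$ in this regime.

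\emph{Second regime (first row inside the block, so $x^{\star}=1$).} Condition~\eqref{eq:upper_bound_estimation} asserts that at least one of $(\lambda K_n/\sqrt n)\land(\lambda\sqrt{K_d})$, $\lambda K_d/\sqrt d$, or $\lambda\sqrt{m\land(K_n-1)\land K_d}$ is $\ge 2\sqrt{2\log(nd/\delta)}\ge 2\sqrt{2\log(n/\delta)}\lor 2\sqrt{2\log(d/\delta)}$, and we match it to the corresponding sub-test. If it is $\lambda K_d/\sqrt d$, the first item of Proposition~\ref{prop:ubestM} gives $\E_M[(f_{SE}-x^{\star})^2]\le\delta$, hence the same bound for $f_{\star}^{(m)}\ge f_{SE}$. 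If it is $(\lambda K_n/\sqrt n)\land(\lambda\sqrt{K_d})$, then $\lambda\sqrt{K_d}\ge 2\sqrt{2\log(n/\delta)}$, while $\lambda K_n/\sqrt n\ge 2\sqrt{2\log(nd/\delta)}>1$ (using $\delta<1/2$) forces $K_n\ge2$, so $K_n-1\ge K_n/2$; the second item of Proposition~\ref{prop:ubestM}, up to a harmless adjustment of the numerical constant caused by passing from $K_n$ to $K_n-1$, gives $\E_M[(f_{SE,2}-x^{\star})^2]\le 2\delta$, hence the same for $f_{\star}^{(m)}\ge f_{SE,2}$. If it is $\lambda\sqrt{\tilde m}$ with $\tilde m:=m\land(K_n-1)\land K_d\ge1$ (again forcing $K_n\ge2$), then when $m'=\tilde m$ the third item of Proposition~\ref{prop:ubestM} applies directly, while when $m'=K_n=\tilde m+1$ the $K_n\times K_n$ submatrix selected by $\hat S_Q^{(m')}$ still contains $K_n-1$ signal rows, so its mean is at least $\lambda(K_n-1)K_n$ and the Bernstein estimate underlying the third item again yields $\E_M[(f_{SE,2}^{(m')}-x^{\star})^2]\le 2\delta$; in either case $f_{\star}^{(m)}\ge f_{SE,2}^{(m')}$ gives $\le 2\delta$. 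Thus $\E_M[(f_{\star}^{(m)}-x^{\star})^2]\le 2\delta\le5\delta$ throughout this regime.

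Taking the supremum over $M\in\cH(\lambda,K_n,K_d)$ combines the two regimes into the claimed bound $5\delta$. The only genuinely non-routine point is the bookkeeping in the second regime: matching each term of~\eqref{eq:upper_bound_estimation} to the right hypothesis of Proposition~\ref{prop:ubestM}, absorbing the constant losses incurred by replacing $K_n$ by $K_n-1$ and $m\land K_n\land K_d$ by $m\land(K_n-1)\land K_d$, and disposing of the degenerate cases $K_n\in\{1,2\}$ (when $K_n=1$, only the $\lambda K_d/\sqrt d$ term can make~\eqref{eq:upper_bound_estimation} hold, so $f_{SE}$ alone suffices). Everything else is a union bound.
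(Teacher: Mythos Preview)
Your approach is correct and is precisely the intended deduction from Proposition~\ref{prop:ubestM}; the paper states the corollary without proof, and the natural argument is exactly the type-I/type-II split you carry out. Your observation that the type-I bound $3\delta$ holds unconditionally (since the thresholding step in each of $f_{SE}$, $f_{SE,2}$, $f_{SE,2}^{(m')}$ uses only row~$1$, which is pure Rademacher when $x^\star=0$, and is independent of the column-selection stage) is correct and in fact yields $3\delta$ rather than $5\delta$; the paper's $5\delta$ is simply a looser bookkeeping constant.

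The constant discrepancies you flag are genuine features of the paper's statement, not gaps in your argument: item~2 of Proposition~\ref{prop:ubestM} requires $\lambda(K_n-1)/\sqrt{n}\ge 2\sqrt{2\log(d/\delta)}$ whereas the corollary only assumes $\lambda K_n/\sqrt{n}\ge 2\sqrt{2\log(nd/\delta)}$, and item~3 has a leading constant $4$ rather than $2$. These are survey-level informalities in the paper, and your ``up to a harmless adjustment of the numerical constant'' is the right attitude. Your treatment of the edge case $m'=K_n=\tilde m+1$ is slightly informal but the idea is sound: the selected $m'\times m'$ block still carries mean at least $\lambda(K_n-1)m'$, which is within a factor $2$ of the fully-signal case, so the Hoeffding bound in the proof of item~3 goes through with an extra constant. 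Alternatively, one could simply redefine $f_\star^{(m)}$ with $f_{SE,2}^{(m\land(K_n-1)\land K_d)}$ and avoid the case distinction entirely.
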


Extending this analysis to the worst case of $K_n,K_d,\lambda$ and relying on $f_{SE}$ and $f_{SE,2}$, we easily deduce the following bound for the ranking error of block matrices.
\begin{corollary}\label{cor:UBinfM}
	There exists a linear-time  estimator $\hat \pi$ of $\pi^*$ that satisfies
$$\sup_{K_n,K_d,\lambda}\sup_{M\in \cH(K_n,K_d,\lambda)}\mathbb E_M\|M_{\hat \pi^{-1}} - M_{\pi^{*-1}}\|_F^2 \leq c(n+\sqrt{nd})\log(nd)\enspace , $$
\end{corollary}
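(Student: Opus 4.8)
The plan is to build the estimator $\hat\pi$ by first identifying the hidden set of ``good experts'' $S$ using the detection-type procedures of Corollary~\ref{cor:UB:estimation}, and then ordering the rows arbitrarily within $S$ and within $[n]\setminus S$. Since $M=\lambda\mathbf 1\{S\times T\}$, any permutation that maps $S$ to the top $|S|$ rows is an oracle permutation, so the ranking loss $\|M_{\hat\pi^{-1}}-M_{\pi^{*-1}}\|_F^2$ equals $\lambda^2 K_d$ times the number of rows misclassified by $\hat\pi$ (a misplaced good expert or a misplaced bad expert each contributes one row of squared norm $\lambda^2 K_d$). Thus it suffices to produce, for \emph{each} row $i\in[n]$, an estimator $\hat x_i$ of $x^\star_i := \mathbf 1\{\exists k: M_{i,k}>0\}$ and bound $\sum_{i=1}^n \lambda^2 K_d\,\P_M(\hat x_i \neq x^\star_i)$ — up to the subtlety that we then need to threshold the $\hat x_i$ to get an actual bipartition, but since all $x^\star_i$ for $i\in S$ are $1$ and all others are $0$, we can simply declare the $|S|$ rows with largest scores to be $S$; the number of errors of this rule is at most twice the number of individual misclassifications, which only costs a factor $2$.

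The key steps, in order. First, by symmetry, Corollary~\ref{cor:UB:estimation} (specialized to $m=1$, so only $f_{SE}$ and $f_{SE,2}$ are used, both linear-time) gives an estimator $\hat x_i$ of $x^\star_i$ for each row $i$ such that $\E_M[(\hat x_i-x^\star_i)^2]\le 5\delta$ whenever $\bigl[(\lambda K_n/\sqrt n)\land(\lambda\sqrt{K_d})\bigr]\lor\bigl[\lambda K_d/\sqrt d\bigr]\ge 2\sqrt{2\log(nd/\delta)}$. Second, choose $\delta$ of order $1/(nd)$ (say $\delta = (nd)^{-2}$), so that $\log(nd/\delta)\asymp\log(nd)$ and $n\cdot 5\delta$ is negligible; then aggregate over a dyadic grid of candidate $(K_n,K_d)$ as in the detection section, at a cost of only a $\log^2(nd)$ multiplicative factor in runtime and an additive $\log$ factor in the separation threshold. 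Third, case-split on $(\lambda,K_n,K_d)$: either the separation condition holds — in which case each row is correctly classified except on an event of probability $O((nd)^{-2})$, so the expected number of misclassified rows is $O(1/n)$ and the ranking loss is $O(\lambda^2 K_d / n) = O(1)$ since $\lambda\le 1$ and $K_d\le d$... wait, this needs care; instead one uses that $\lambda^2 K_d$ times the \emph{total} error count is at most $\lambda^2 K_d K_n$ deterministically (the loss never exceeds $\|M\|_F^2$, achieved by the constant permutation) so on the bad event we pay at most $\lambda^2 K_n K_d$, which is $O(1)$ times $(nd)^{-2}\cdot\lambda^2K_nK_d \le (nd)^{-2}\cdot nd$, negligible — or the separation fails, meaning $\lambda^2 K_d \le c\,n\log(nd)/K_n^2$, $\lambda^2 K_d\le c\,d\log(nd)/K_d$, and $\lambda^2 K_d\le c\log(nd)$ simultaneously; in this regime we bound the loss by its trivial upper bound $\|M\|_F^2=\lambda^2K_nK_d$ and check that $\lambda^2K_nK_d \le c(n+\sqrt{nd})\log(nd)$ using these three inequalities.

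The main obstacle is the final case analysis in the regime where the separation condition \emph{fails}: there we cannot guarantee correct recovery, so we must show that the worst-case matrix $M$ in that regime already has small Frobenius norm, $\lambda^2 K_n K_d \lesssim (n+\sqrt{nd})\log(nd)$. Concretely, from $\lambda^2 K_d\lesssim \log(nd)$ we get $\lambda^2 K_n K_d\lesssim K_n\log(nd)$, which is fine when $K_n\lesssim \sqrt{nd}$; from $\lambda^2 K_d^2\lesssim d\log(nd)$ and $\lambda^2 K_n^2\lesssim n\log(nd)$ (i.e.\ $\lambda K_d\lesssim\sqrt{d\log(nd)}$ and $\lambda K_n\lesssim\sqrt{n\log(nd)}$) we get $\lambda^2 K_nK_d\lesssim\sqrt{nd}\log(nd)$ directly by multiplying. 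So in fact the single bound $\lambda K_d\le c\sqrt{d\log(nd)}$ together with $\lambda K_n\le c\sqrt{n\log(nd)}$ already yields $\lambda^2 K_n K_d\le c^2\sqrt{nd}\log(nd)$, and one checks these two follow from the negation of~\eqref{eq:upper_bound_estimation} with $m=1$ — modulo the $\lambda\sqrt{K_d}$ branch of the first bracket, which is the reason the $f_{SE,2}$ estimator (and hence the $\lambda^2 K_d\lesssim\log(nd)$ bound, giving the $n\log(nd)$ term) is needed. Assembling the two cases, and adding the negligible $O((nd)^{-1})$ contribution from bad events, gives the claimed $c(n+\sqrt{nd})\log(nd)$ bound; the estimator is linear-time up to the $\log^2(nd)$ aggregation factor, which is absorbed into ``linear-time'' in the informal statement or can be stated as near-linear.
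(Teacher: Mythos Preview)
Your approach is correct and is exactly what the paper has in mind: the paper itself gives no formal proof, only the one-line remark ``Extending this analysis to the worst case of $K_n,K_d,\lambda$ and relying on $f_{SE}$ and $f_{SE,2}$, we easily deduce the following bound.'' Your case-split --- either the separation condition of Corollary~\ref{cor:UB:estimation} (with $m=1$) holds and recovery is near-perfect, or it fails and then $\lambda^2 K_n K_d \lesssim (n+\sqrt{nd})\log(nd)$ by the two-subcase argument you give --- is precisely the intended route, and your verification that the failure of \eqref{eq:upper_bound_estimation} forces either $\lambda K_n/\sqrt{n}\cdot \lambda K_d/\sqrt{d}\lesssim \log(nd)$ or $\lambda^2 K_d\lesssim \log(nd)$ (hence $\lambda^2 K_nK_d\lesssim \sqrt{nd}\log(nd)$ or $\lesssim n\log(nd)$) is the key computation.

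One minor simplification: the dyadic aggregation over $(K_n,K_d)$ that you invoke is unnecessary here, because neither $f_{SE}$ nor $f_{SE,2}$ depends on the unknown parameters $(\lambda,K_n,K_d)$ --- both are fully data-driven once $\delta$ is fixed. So the estimator is genuinely linear-time without any $\log^2(nd)$ overhead, and you can drop that step entirely.
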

where $c$ is a numerical constant.

\subsection{Informational lower bounds.}

To establish a matching information bound, we consider, for the sake of simplicity, the  Bayesian model  $M= \lambda\1_{S_n\times T_n}$ where the random variables $\1\{i\in S_n\}$ and $\1\{j\in T_n\}$ respectively follow independent Bernoulli distribution with parameters $K_n/n$ and $K_d/d$. As a consequence, the matrix $M$ is a block matrix with size close to $(K_n,K_d)$. We write $\E[.]$ for the expectation when $M$ follows this prior distribution.

\begin{proposition}[Informational lower bound for estimating $x^{\star}$]\label{prop:LBinfestM}
There exists a numerical constant $c_0$ such that the following holds.  Assume that $\lambda \leq 1/2$,  $K_n \leq n/2$ and
\begin{align}\label{eq:condition:info:lower:estion}
\left[\lambda \sqrt{ K_n\land K_d} \right] \lor \left[\lambda \frac{K_d}{\sqrt{d}} \right]\leq c_0\enspace .
\end{align}
Then, for any estimator $f$, we have $\E[(f-x^{\star})^2] \geq \E[x^{\star 2}]/32$.
\end{proposition}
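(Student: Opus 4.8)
The quantity to bound from below is the Bayes risk $\mathbb{E}[\Var(x^\star\mid Y)]=\inf_f\mathbb{E}[(f-x^\star)^2]$. Write $\xi_i=\mathbf 1\{i\in S_n\}$ and $p=K_n/n$, so that $x^\star=\xi_1\,\mathbf 1\{T_n\neq\emptyset\}$ and $\mathbb{E}[x^{\star2}]=\mathbb{E}[x^\star]=p\,\mathbb{P}[T_n\neq\emptyset]$, with $\mathbb{P}[T_n=\emptyset]=(1-K_d/d)^d\le e^{-1}$ since $K_d\ge1$. Revealing information to the statistician can only decrease the Bayes risk, so revealing the bit $B=\mathbf 1\{T_n\neq\emptyset\}$ yields $\mathbb{E}[\Var(x^\star\mid Y)]\ge\mathbb{P}[B=1]\cdot r$, where $r$ is the Bayes risk for estimating $\xi_1\sim\mathrm{Bern}(p)$ from $Y$ in the model conditioned on $\{T_n\neq\emptyset\}$ (using $\xi_1\perp B$). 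The core analytic input is the elementary fact, for a $\{0,1\}$ parameter with prior mass $p$ on $1$ and data laws $\mathbb{P}_1,\mathbb{P}_0$, that the Bayes risk equals $p(1-p)-\Var(\text{posterior mean})\ge p(1-p)-p^2\chi^2(\mathbb{P}_1\Vert\mathbb{P}_0)$, the last step being a one-line likelihood-ratio computation. Hence, given $p\le1/2$ and $\mathbb{P}[B=1]\ge1-e^{-1}$, it is enough to exhibit, in each regime, a way of revealing nuisance information under which the relevant $\chi^2$ (averaged over the revealed information) is bounded by a constant that $\to0$ as $c_0\to0$: then $r\ge p(1-p)-O(c_0^2)p^2\ge p/4$ and $\mathbb{E}[\Var(x^\star\mid Y)]\ge\mathbb{P}[B=1]\,p/4\ge\mathbb{E}[x^{\star2}]/32$ for $c_0$ small enough (no attempt is made to optimize $1/32$).

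It remains to control this $\chi^2$, and I split on which of $K_n,K_d$ is smaller, matching the hypothesis $\lambda\sqrt{K_n\wedge K_d}\le c_0$. \emph{Regime $K_d\le K_n$}, so $\lambda\sqrt{K_d}\le c_0$: additionally reveal $T_n$. Given $T_n=T\neq\emptyset$, the only part of $Y$ informative about $\xi_1$ is the partial row $Y_{1,T}$, i.i.d.\ Rademacher under $\xi_1=0$ and $\lambda$-biased on $T$ under $\xi_1=1$, so $\chi^2=(1+\lambda^2)^{|T|}-1$. Since $|T_n|\sim\mathrm{Bin}(d,K_d/d)$, $\mathbb{E}[(1+\lambda^2)^{|T_n|}]=(1+\lambda^2K_d/d)^d\le e^{\lambda^2K_d}\le e^{c_0^2}$, and the averaged, $\{T_n\ne\emptyset\}$-conditioned divergence is $(1-e^{-1})^{-1}\bigl(\mathbb{E}[(1+\lambda^2)^{|T_n|}]-1\bigr)\le(1-e^{-1})^{-1}(e^{c_0^2}-1)=O(c_0^2)$.

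\emph{Regime $K_n<K_d$}, so $\lambda\sqrt{K_n}\le c_0$ and $\lambda K_d/\sqrt d\le c_0$ (and we may assume $K_d\le d/2$, since $K_d>d/2$ forces $\lambda^2K_d\le4c_0^2$ and the previous argument applies): here revealing $T_n$ is \emph{not} affordable, as with $T_n$ known the divergence is $e^{\lambda^2K_d}-1$, which may diverge. Instead reveal $\xi_{2:n}$, i.e.\ the set $I=\{i\ge2:\xi_i=1\}$ of the other good experts. Conditionally on $\mathcal F=\sigma(\xi_{2:n},Y_{2:n})$ the posterior law of $T_n$ factorizes over columns — the $\{k\in T_n\}_{k\in[d]}$ are independent with posterior probabilities $q_k=q_k(\mathcal F)$ — because the $\mathcal F$-likelihood of a candidate set $T$ is $\prod_{k\in T}\prod_{i\in I}(1+\lambda Y_{i,k})$, a product over $k$. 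Since under $\mathcal F$ the law of $Y_1$ is uniform under $\xi_1=0$ and $\int(\text{biased on }T)\,\hat\rho(dT)$ under $\xi_1=1$, and $\mathbb{E}_{Y_1}\big[\prod_{k\in T}(1+\lambda Y_{1,k})\prod_{k\in T'}(1+\lambda Y_{1,k})\big]=(1+\lambda^2)^{|T\cap T'|}$ for uniform $Y_1$ while two independent posterior draws $T,T'$ coincide at column $k$ with probability $q_k^2$, a short computation gives
\[
\chi^2(\mathbb{P}_1^{\mathcal F}\Vert\mathbb{P}_0^{\mathcal F})+1=\prod_{k\in[d]}\bigl(1+\lambda^2q_k^2\bigr).
\]
Conditioning further on $(T_n,I)$ the columns become independent, and for each $k$ one checks $\mathbb{E}[q_k^2\mid T_n,I]\le\tfrac{4K_d^2}{d^2}(1+3\lambda^2)^{|I|}$ (from $q_k\le\tfrac{K_d}{d}\tfrac{b_k}{1-K_d/d}$ with $b_k=\prod_{i\in I}(1+\lambda Y_{i,k})$ and $\mathbb{E}[b_k^2\mid\cdot]\le(1+3\lambda^2)^{|I|}$). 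Expanding the product, $\mathbb{E}[\chi^2]\le\mathbb{E}_I\big[(1+\lambda^2\beta_I)^d-1\big]$ with $\lambda^2d\beta_I=\tfrac{4\lambda^2K_d^2}{d}(1+3\lambda^2)^{|I|}\le4c_0^2(1+3\lambda^2)^{|I|}$; working on the high-probability event $\{|S_n|\le2K_n+C\log(nd)\}$ (reveal $|S_n|,|T_n|$, whose tails contribute only a negligible additive term to the Bayes risk) one has $(1+3\lambda^2)^{|I|}\le e^{3\lambda^2|S_n|}=O(1)$ because $\lambda^2K_n\le c_0^2$, so $\mathbb{E}[\chi^2]=O(c_0^2)$.

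The main obstacle is precisely this second regime $K_d>K_n$: one cannot disclose the hidden column set (the $\chi^2$ then blows up when $\lambda^2K_d\gg1$), nor compare the law of the remaining $n-1$ rows to pure noise (when $K_n=\Theta(n)$ the planted block among those rows is itself detectable, e.g.\ by column sums). The resolution — disclose the \emph{other experts} instead of the hidden columns, exploit the column-wise factorization of the resulting posterior, and observe that it is $\sum_kq_k^2$, not $\sum_kq_k=\mathbb{E}[|T_n|\mid\mathcal F]$, that enters the divergence, this being small exactly because the per-column identification strength $\lambda\sqrt{K_n}$ lies below $c_0$ so the posterior over $T_n$ stays diffuse — is the heart of the argument; the remaining concentration estimates for $|S_n|,|T_n|$ and the $\{T_n\neq\emptyset\}$-conditioning are routine.
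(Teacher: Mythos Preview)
Your approach is essentially the paper's: same two-case split on $K_n\wedge K_d$, same side information revealed in each case (the column set $T_n$ when $K_d\le K_n$; the other experts' memberships $\xi_{2:n}$ when $K_n<K_d$), and a $\chi^2$-type control on the resulting binary problem. In Case~2 you condition further on $Y_{2:n}$ and exploit the product-Bernoulli posterior of $T_n$ to get $\chi^2+1=\prod_k(1+\lambda^2q_k^2)$, whereas the paper computes the $\chi^2$ between the marginal laws of $(\sum_{i\in I}Y_{i,\cdot},Y_{1,\cdot})$ directly; both routes yield the same bound.

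One point to clean up: in the parenthetical ``reveal $|S_n|,|T_n|$'', note that once $I=\{i\ge 2:\xi_i=1\}$ has been revealed, additionally revealing $|S_n|=|I|+\xi_1$ discloses $\xi_1$ itself and the Bayes risk collapses to zero. What you actually need (and what the paper does) is simply to restrict the expectation to the $\mathcal{F}$-measurable event $\{|I|\le CK_n\}$, which has probability at least $1-e^{-c K_n}$ by a Chernoff bound, and to use that $\Var(\xi_1\mid\cdot)\ge 0$ on the complement. Relatedly, drop the ``$+C\log(nd)$'' in your threshold: $\lambda^2\log(nd)$ is not controlled by the hypotheses, while the plain bound $(1+3\lambda^2)^{|I|}\le e^{3C\lambda^2 K_n}\le e^{3Cc_0^2}$ on $\{|I|\le CK_n\}$ already suffices. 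With these cosmetic fixes your argument is correct and matches the paper's.
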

The condition~\eqref{eq:condition:info:lower:estion} matches (up to polylogarithmic terms) the sufficient condition for $f_{\star}^{(K_n\wedge K_d)}$ to have a small risk over $\cH(\lambda,K_n,K_d)$ and therefore characterizes minimax risk.
We readily deduce from that a lower bound for the minimax maximum ranking error over all submatrices models. 
\begin{corollary}\label{cor:LBinfM}
There exists a universal constant $c>0$ such that
$$\inf_{\hat \pi~\mathrm{measurable}}\sup_{K_n,K_d,\lambda}\sup_{M\in \cH(K_n,K_d,\lambda)} \mathbb E_M\|M_{\hat \pi^{-1}}-M_{\pi^{*-1}}\|_F^2\geq c(n+\sqrt{nd})\enspace .$$
\end{corollary}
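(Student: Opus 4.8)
The plan is to derive Corollary~\ref{cor:LBinfM} as a direct consequence of the per-row informational lower bound in Proposition~\ref{prop:LBinfestM}, by a standard ``averaging over rows'' argument analogous to the reduction behind Corollary~\ref{cor:UBinfM}. The key point is that the ranking loss $\|M_{\hat\pi^{-1}}-M_{\pi^{*-1}}\|_F^2$ for a block matrix $M=\lambda\mathbf 1\{S\times T\}$ decomposes additively over rows: if $\hat\pi$ produces an ordering in which exactly the set $\hat S$ of rows is placed ``at the top'' (i.e.\ in the first $|S|$ positions), then misranking row $i$ — meaning $i\in S\triangle \hat S$ — contributes $\lambda^2 K_d$ to the loss, so that $\|M_{\hat\pi^{-1}}-M_{\pi^{*-1}}\|_F^2 \geq \lambda^2 K_d\,|S\triangle \hat S|$ (up to the obvious care that within-block or within-complement reorderings cost nothing, which is exactly why the loss only sees the symmetric difference of supports). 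Hence any ranking estimator $\hat\pi$ induces, for each row $i$, an estimator $f_i(Y):=\mathbf 1\{i\in \hat S\}$ of the indicator $x_i^\star:=\mathbf 1\{i\in S\}$, and $\mathbb E_M\|M_{\hat\pi^{-1}}-M_{\pi^{*-1}}\|_F^2 \geq \lambda^2 K_d\sum_{i=1}^n \mathbb E_M[(f_i-x_i^\star)^2]$.

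Next I would put the Bayesian prior from Proposition~\ref{prop:LBinfestM} on $(S,T)$: $\mathbf 1\{i\in S\}\sim\mathrm{Bern}(K_n/n)$ i.i.d.\ and $\mathbf 1\{j\in T\}\sim\mathrm{Bern}(K_d/d)$ i.i.d. By symmetry of this prior under permutations of the rows, Proposition~\ref{prop:LBinfestM} applies verbatim not only to row $1$ but to every row $i$: whenever the scaling condition $[\lambda\sqrt{K_n\wedge K_d}]\vee[\lambda K_d/\sqrt d]\leq c_0$ holds (together with $\lambda\leq 1/2$, $K_n\leq n/2$), we get $\mathbb E[(f_i-x_i^\star)^2]\geq \mathbb E[x_i^{\star 2}]/32 = (K_n/n)/32$ for each $i\in[n]$. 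Summing over $i$ yields $\mathbb E\|M_{\hat\pi^{-1}}-M_{\pi^{*-1}}\|_F^2 \geq \lambda^2 K_d\cdot n\cdot(K_n/n)/32 = \lambda^2 K_n K_d/32$, and since the Bayes risk lower-bounds the supremum over $\cH(\lambda,K_n,K_d)$ (again using that the random $(S,T)$ concentrate so that $|S|,|T|$ are of order $K_n,K_d$ — one can either restrict the prior to the event $\{|S|=K_n,|T|=K_d\}$ at a constant cost, or work with the nearby class $\cH$ absorbing this into the constant), we obtain $\inf_{\hat\pi}\sup_{M\in\cH(\lambda,K_n,K_d)}\mathbb E_M\|M_{\hat\pi^{-1}}-M_{\pi^{*-1}}\|_F^2 \geq c\,\lambda^2 K_n K_d$ in this regime.

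Finally I would optimize the choice of $(\lambda,K_n,K_d)$ subject to the constraint~\eqref{eq:condition:info:lower:estion} to make $\lambda^2 K_n K_d$ as large as possible, matching the two terms $n$ and $\sqrt{nd}$ in the statement. For the $n$ term: take $K_n=n/2$, $K_d=d$, and $\lambda$ a small constant (so $\lambda K_d/\sqrt d=\lambda\sqrt d$ — wait, this is large; instead take $K_d$ a constant, say $K_d=1$, and $\lambda$ constant, giving $\lambda\sqrt{K_n\wedge K_d}=\lambda$ constant and $\lambda K_d/\sqrt d$ small, so $\lambda^2 K_nK_d\asymp n$). For the $\sqrt{nd}$ term: one wants $\lambda^2 K_nK_d\asymp\sqrt{nd}$ while respecting both $\lambda\sqrt{K_n\wedge K_d}\lesssim 1$ and $\lambda K_d/\sqrt d\lesssim 1$; choosing $K_d=\sqrt{nd}\wedge d$ roughly balanced, $K_n=\sqrt{nd}\wedge n$, and $\lambda\asymp 1/\sqrt{K_n\wedge K_d}$ does the job (a short case analysis on whether $n\leq d$ handles which of $K_n,K_d$ saturates at the full dimension). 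Taking the better of the two constructions gives $\gtrsim n+\sqrt{nd}$. The main obstacle — really the only non-bookkeeping point — is verifying cleanly that the ranking loss genuinely lower-bounds $\lambda^2 K_d\sum_i(f_i-x_i^\star)^2$ for an arbitrary (not necessarily ``block-respecting'') permutation $\hat\pi$; this follows because among all permutations of the $n$ rows, the loss $\|M_{\hat\pi^{-1}}-M_{\pi^{*-1}}\|_F^2$ with $M=\lambda\mathbf 1\{S\times T\}$ depends on $\hat\pi$ only through which rows land in the top $|S|$ slots versus the rest (the matrix $M_{\pi^{*-1}}$ being, up to the free within-group orderings, the block $\lambda\mathbf 1\{[K_n]\times T\}$), so that the loss equals $\lambda^2 K_d$ times the number of top-slot positions occupied by a row outside $S$, plus the symmetric count, i.e.\ $\lambda^2 K_d|S\triangle\hat S|$, where $\hat S$ is the set of rows $\hat\pi$ sends to the top $|S|$ positions. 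Everything else is the routine Bayes-risk-versus-minimax comparison and the arithmetic optimization above.
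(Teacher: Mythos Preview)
Your approach is essentially the paper's: both deduce the corollary from Proposition~\ref{prop:LBinfestM} by instantiating $(\lambda,K_n,K_d)$ so that $\lambda^2 K_nK_d$ is of the desired order. You are more explicit than the paper (which gives a single line), and your two separate constructions correctly produce both the $n$ and the $\sqrt{nd}$ terms; note that the paper's stated single choice $K_n=n/2$, $K_d=\sqrt{nd}\wedge d$, $\lambda\asymp(nd)^{-1/4}\vee n^{-1/2}$ only yields $\lambda^2K_nK_d\asymp\sqrt{nd}$, so an additional instance (such as your $K_d=1$, $\lambda$ constant) is indeed needed for the $n$ term when $n>d$.

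One genuine wrinkle you flag but do not quite resolve: the estimator $f_i:=\mathbf 1\{i\in\hat S\}$ with $\hat S=\{\hat\pi^{-1}(j):j\le |S|\}$ is \emph{not} measurable in $Y$, since $|S|$ is random under the Bernoulli prior of Proposition~\ref{prop:LBinfestM}. Your proposed fix ``restrict the prior to $\{|S|=K_n,|T|=K_d\}$ at a constant cost'' does not work as stated --- that event has probability $\asymp (K_nK_d)^{-1/2}$, not a constant. The clean repair is to use the deterministic threshold: set $f_i:=\mathbf 1\{\hat\pi(i)\le K_n\}$, so that $\hat S_0:=\{j:\hat\pi(j)\le K_n\}$ is measurable, and then observe that the true ranking loss $\lambda^2|T|\,|S\triangle\hat S_{|S|}|$ differs from $\lambda^2|T|\,|S\triangle\hat S_0|$ by at most $\lambda^2|T|\,\big||S|-K_n\big|$. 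Since $S$ and $T$ are independent and $\mathbb E\big||S|-K_n\big|\le\sqrt{K_n}\ll K_n/32$ for $K_n=n/2$, this correction is negligible. The remaining correlation between $|T|$ and $|S\triangle\hat S_0|$ is handled by noting that the proof of Proposition~\ref{prop:LBinfestM} already proceeds conditionally on $T$ (Case~1) or on $S'_n$ (Case~2), so the per-row bound can be applied after conditioning. These are bookkeeping steps the paper also leaves implicit.
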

This corollary is deduced fom Proposition~\ref{prop:LBinfestM} by taking e.g.~$K_n = n/2$ and $K_d = \sqrt{nd} \land d$, and $\lambda$ of order $(nd)^{-1/4} \lor n^{-1/2}$. In combination with~Corollary~\ref{cor:UBinfM}, we deduce  there is no significant statistical-computational gap for estimating $\pi^*$ over the collection $\bigcup_{K_n,K_d,\lambda} \cH(K_n,K_d,\lambda)$.

\subsection{LD lower bounds.}

In Corollary~\ref{cor:UB:estimation}, the only procedure that manages to achieve a small risk for estimating $x^{\star}$ up to the boundary given by~\eqref{eq:condition:info:lower:estion} suffers from an exponential-time computational complexity. In fact, this corollary suggests that, for estimating $x^{\star}$ over $\cH$, polynomial-time estimators may require a signal condition of the form $\left[\left(\lambda \frac{K_n}{\sqrt{n}}\right) \land (\lambda\sqrt{K_d})\right]\lor \left[\lambda \frac{K_d}{\sqrt{d}} \right]$ where is large. Unfortunately, we cannot deduce a LD lower bound from the detection problem in Section~\ref{sec:detection} because, in some regimes (e.g. $K_d\geq\sqrt{d}$ and $K_n\geq \sqrt{n}$), it is significantly easier to detect the non-nullity of $M$ than to estimate the support. In such situations where a detection-to-estimation gap occurs, one therefore has to establish a dedicated computational lower bound.

In the closely related problem of submatrix estimation/localization, such lower bounds have been obtained by reduction to planted clique~\cite{cai2017computational,brennan2018reducibility} at least when the matrix $M$ is almost square. Other matching lower bounds have been obtained using the statistical queries paradigm~\cite{feldman2017statistical} or the overlap gap property~\cite{gamarnik2021overlap}. In the LD polynomial framework, techniques for establishing estimation lower bounds have been first introduced  in the seminal work~\cite{SchrammWein22} and have been later refined by~\cite{SohnWein25}. In this subsection, we follow the LD paradigm but we take a different proof approach.

As in the previous subsection, we consider the Bayesian model where $M=\lambda \mathbf{1}\{S\times T\}$ where $|S|\sim \mathrm{Bin}(n,K_n/n)$ and $|T|\sim \mathrm{Bin}(d,K_d/d)$. Following~\cite{SchrammWein22}, we define 
$$\mathrm{Corr}^{\star}(f) = \frac{\mathbb E [f x^{\star}]}{\sqrt{\mathbb E[f^2]}}\ ; \quad \quad \quad \mathrm{Corr}^{\star}_{\leq D}= \sup_{f\in \mathcal{P}_{\leq D}}\mathrm{Corr}^{\star}(f) 
\enspace .$$
Indeed, similarly to the advantage ($\mathrm{Adv}(.)$) for detection, the correlation $\mathrm{Corr}^{\star}(f)$ satisfies the following property.
\begin{lemma}\label{lem:lower_bound_loss2}
We have
$$\E[(f-x^{\star})^2] \geq \E[x^{\star 2}] - \mathrm{Corr}^{\star 2}(f)\enspace .$$
\end{lemma}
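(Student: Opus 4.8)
The plan is to expand the quadratic loss and apply a Cauchy–Schwarz argument in the spirit of the standard low-degree correlation bound of~\cite{SchrammWein22}. First I would write
\[
\E[(f-x^{\star})^2] = \E[f^2] - 2\E[f x^{\star}] + \E[x^{\star 2}]\enspace .
\]
Since $x^\star \in \{0,1\}$ we have $\E[x^{\star 2}] = \E[x^\star]$, but it is cleaner to keep the notation $\E[x^{\star 2}]$ throughout. The only term that can be negative here is $-2\E[f x^\star]$, and the whole point is to show that it cannot be too negative relative to $\E[f^2]$ and $\E[x^{\star 2}]$.

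The key step is to control $\E[f x^\star]$ by $\sqrt{\E[f^2]}$ times $\mathrm{Corr}^\star(f)$, which is just the definition of $\mathrm{Corr}^\star(f)$ rearranged: $\E[f x^\star] = \mathrm{Corr}^\star(f)\sqrt{\E[f^2]}$. Substituting this in gives
\[
\E[(f-x^{\star})^2] = \E[f^2] - 2\,\mathrm{Corr}^\star(f)\sqrt{\E[f^2]} + \E[x^{\star 2}]\enspace .
\]
Viewing the right-hand side as a quadratic in the variable $t = \sqrt{\E[f^2]} \geq 0$, namely $t^2 - 2\,\mathrm{Corr}^\star(f)\, t + \E[x^{\star 2}]$, I would minimize over $t \in \mathbb{R}$ (relaxing the nonnegativity constraint only makes the bound smaller, hence still valid): the unconstrained minimum is attained at $t = \mathrm{Corr}^\star(f)$ with value $\E[x^{\star 2}] - \mathrm{Corr}^{\star 2}(f)$. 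This yields exactly
\[
\E[(f-x^{\star})^2] \geq \E[x^{\star 2}] - \mathrm{Corr}^{\star 2}(f)\enspace ,
\]
as claimed. One small point worth addressing is the degenerate case $\E[f^2]=0$ (i.e. $f=0$ a.s.), where $\mathrm{Corr}^\star(f)$ is defined via the $0/0=1$ convention; there the left-hand side is $\E[x^{\star 2}]$ and the right-hand side is $\E[x^{\star 2}]-1 \le \E[x^{\star 2}]$, so the inequality holds trivially.

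There is essentially no obstacle here — the statement is a one-line consequence of completing the square — so the only thing to be careful about is bookkeeping around the normalization convention for $\mathrm{Corr}^\star$ and making sure the relaxation from $t \ge 0$ to $t \in \mathbb{R}$ is noted explicitly. The lemma will later be combined with an upper bound on $\mathrm{Corr}^{\star}_{\leq D}$ (to be established in the sequel, following the permutation-invariant graph formalism developed in Section~\ref{sec:detection}) to turn control of low-degree correlations into a genuine minimax-type lower bound for estimating $x^\star$.
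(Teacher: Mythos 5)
Your proof is correct and follows essentially the same route as the paper's: expand the quadratic loss, factor out $\sqrt{\E[f^2]}$, and complete the square (the paper phrases this via the identity $a(a-2b) \geq -b^2$, which is the same unconstrained minimization you perform). Your explicit treatment of the degenerate case $\E[f^2]=0$ is a minor addition the paper omits but does not change the argument.
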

In the previous section, we derived an explicit form for $\mathrm{Adv}^2_{\leq D}$ relying on an orthonormal family with respect to $\mathbb{E}_{M_0}$. In contrast, the definition of $\mathrm{Corr}_{\leq D}^*$ involves in the denominator the expectation $\mathbb E[f^2]$ with respect to the mixture distribution $\mathbb{P}$. Unfortunately, there is no known simple orthonormal family with respect to this distribution.  For this reason, \cite{SchrammWein22} and~\cite{SohnWein25} introduced several work-arounds to upper bound $\mathrm{Corr}_{\leq D}^*$-- those mainly involve to solve  an over-complete system of linear equations. This approach led to  a flourishing line of literature and allowed to establish tight LD lower bounds for submatrix estimation, clustering in stochastic block models~\cite{SohnWein25,luo2023computational,pmlr-v291-chin25a}, graph coloring~\cite{kothari2023planted}, Gaussian mixtures models~\cite{Even24,Even25a}, or tensor decomposition~\cite{wein2023average}. On the downside, it requires technical virtuosity that may obscure important statistical quantities. Here, we rather follow the more constructive approach introduced in~\cite{CGGV25} by constructing an almost-orthonormal family with respect to $\mathbb{E}[.]$.

\subsubsection{Construction of an almost-orthonormal family of polynomials.}

For detection, we introduced in the previous section, the family $(P_G)_{G\in \cG_{\leq D}}$ of permutation-invariant polynomials. However, this family is not suited for our purpose here. First, the distribution of $(Y,x^{\star})$ is only invariant up to permutations of the columns of $Y$ and up to permutations of rows of $Y$ that let the first row fixed. This is easily dealt with a slight modification of the collections of labelings and of templates. Second and more seriously, the family $P_{G}$ is far from being orthogonal. Indeed, given templates $G^{(1)}$ and $G^{(2)}$ and labelings $\sigma^{(1)}$ and $\sigma^{(2)}$,  $P_{G^{(1)},\sigma^{(1)}_r,\sigma^{(1)}_c}$ and $P_{G^{(2)},\sigma^{(2)}_r,\sigma^{(2)}_c}$ are positively correlated under $\mathbb{E}$ as soon as  the two corresponding labelled graphs have common nodes. The resulting correlation between $P_{G^{(1)}}$ and $P_{G^{(2)}}$ will be too large to ensure the almost-orthogonality of the family. To fix this issue, we modify the polynomials by centering each connected component as proposed in~\cite{CGGV25,kunisky2024tensor,montanari2025equivalence}.

Consider a template $G= (V,W,E)$ where $V=\{v_1,v_2,\ldots, v_r\}$, $W=\{w_1,w_2,\ldots, w_s\}$ with at least one edge and without isolated nodes to the possible exception of $v_1$. Let $\GS_V^{\star}$ the set of injective mapping $\sigma_r$ from $V\rightarrow [n]$ such that $\sigma_r(v_1)=1$. Decompose $G$ into $G_{1},\ldots, G_{cc}$ its $cc$ connected components with at least one edge. Given labelings $\sigma_r$ and $\sigma_c$, we define the polynomials
\begin{equation}\label{eq:definition:P_G:est}
    \overline{P}^{\star}_{G, \sigma_r,\sigma_c}:=\prod_{l=1}^c \left[P_{G_l,\sigma_r,\sigma_l} - \mathbb{E}[P_{G_l,\sigma_r,\sigma_l} ]\right] \ , \quad \quad 
\overline{P}^{\star}_G = \sum_{\sigma_r\in \GS_V^{\star}, \sigma_c\in \GS_W} \overline{P}^{\star}_{G,\sigma_r,\sigma_l} \enspace . 
\end{equation}
Then, we normalize the polynomials
\begin{equation}~\label{eqn:variance of graph2}
\Psi_G^{\star} = \frac{\overline{P}^{\star}_G}{\sqrt{\mathbb V^{\star}(G)}},~~~~~\mathrm{where}~~~\mathbb V^{\star}(G) = \frac{(n-1)!}{(n-|V|)!} \frac{d!}{(d-|W|)!} |\mathrm{Aut}^{\star}(G)|\enspace ,
\end{equation}
where $\mathrm{Aut}^{\star}(G)$ is the set of automorphisms of the graph $G$ restricted to the ones where the node $v_1$ is fixed. 
We say that  two graphs $G^{(1)}=(V^{(1)},W^{(1)},E^{(1)})$ and $G^{(2)}=(V^{(2)},W^{(2)},E^{(2)})$ are equivalent if there exists two bijections $\tau_v: V^{(1)} \mapsto V^{(2)}$, $\tau_w: W^{(1)} \mapsto W^{(2)}$ such that that $\tau_v, \tau_w$ preserve the edges and $\tau_{v}(v_1^{(1)})= v_1^{(2)}$. 
Define $\mathcal G^{\star}_{\leq D}$ as a maximum collection of non-equivalent bipartite graphs $G=(V,W,E)$ with at most $D$ edges, at least one edge, and such that all nodes to the possible exception of $v_1$ are  not isolated.

The next lemma states that $(1,(\Psi_G)_{G\in \cG^{\star}_{\leq D}})$ is a basis of  the subspace of $\cP_{\leq D}$ with the suitable permutation invariance properties. 
\begin{lemma}\label{lem:reduction::permutation2}
Fix any any degree $D\in \mathbb N$. Then, 
$$ \mathrm{Corr}^{\star}_{\leq D} = \sup_{(\alpha_{\emptyset}, (\alpha_G)_{G\in \mathcal G^{\star}_{\leq D}})} \mathrm{Corr}^{\star}\left(\alpha_{\emptyset}+\sum_{G\in \mathcal G^{\star}_{\leq D}} \alpha_G \Psi^{\star}_G\right)\enspace .$$
\end{lemma}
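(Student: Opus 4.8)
The plan is to show that $\mathrm{Corr}^{\star}_{\leq D}$, which is a supremum over all polynomials of degree at most $D$, can be reduced to a supremum over the specific span $\mathrm{span}(1, (\Psi^{\star}_G)_{G\in \mathcal G^{\star}_{\leq D}})$. This requires two ingredients: first, a symmetrization argument showing the optimal polynomial can be taken to lie in the subspace of $\mathcal P_{\leq D}$ that is invariant under the relevant group action; and second, an identification of $\mathrm{span}(1,(\Psi^{\star}_G)_G)$ with exactly that invariant subspace.

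\textbf{Step 1: Symmetrization.} Let $\mathcal H$ be the group $\mathfrak S_{[2,n]} \times \mathfrak S_{[d]}$ acting on $Y$ by permuting rows (fixing row $1$) and columns. The key observation is that both the joint law of $(Y, x^{\star})$ under $\mathbb P$ and the law of $Y$ under $\mathbb P_{M_0}$ (if one uses it — but here the denominator is $\mathbb E[f^2]$ under the mixture) are $\mathcal H$-invariant: the prior on $S$ is exchangeable in $[2,n]$ and puts $1\in S$ or not independently, the prior on $T$ is exchangeable in $[d]$, and $x^{\star}$ depends only on whether $1\in S$. Given any $f\in\mathcal P_{\leq D}$, define its symmetrization $\bar f = \frac{1}{|\mathcal H|}\sum_{h\in\mathcal H} f\circ h$. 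Then $\bar f\in\mathcal P_{\leq D}$ (degree is preserved by coordinate permutations), $\mathbb E[\bar f x^{\star}] = \mathbb E[f x^{\star}]$ by invariance of the joint law, and $\mathbb E[\bar f^2]\le \mathbb E[f^2]$ by Jensen/convexity of $t\mapsto t^2$ applied to the averaging. Hence $\mathrm{Corr}^{\star}(\bar f)\ge \mathrm{Corr}^{\star}(f)$, so the supremum defining $\mathrm{Corr}^{\star}_{\leq D}$ is attained (or approached) on the $\mathcal H$-invariant polynomials of degree $\le D$.

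\textbf{Step 2: Identifying the invariant subspace.} It remains to check that the $\mathcal H$-invariant subspace of $\mathcal P_{\leq D}$ is precisely $\mathrm{span}(1,(\Psi^{\star}_G)_{G\in\mathcal G^{\star}_{\leq D}})$. Every $f\in\mathcal P_{\leq D}$ is a linear combination of monomials $Y^S$ with $S\subset[n]\times[d]$, $|S|\le D$. Each such $S$ determines a bipartite multigraph (here simple graph, since each entry appears at most to the first power, as $Y_{i,k}^2=1$ — so after reduction a squarefree monomial corresponds to a simple bipartite graph) on the rows and columns it touches; discarding isolated vertices and recording whether row $1$ is among the touched rows gives a well-defined element of $\mathcal G^{\star}_{\leq D}$ together with an injective labelling in $\mathfrak S_V^{\star}\times\mathfrak S_W$. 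Grouping monomials by their $\mathcal H$-orbit, the $\mathcal H$-invariant polynomials are exactly the span of the orbit sums, and the orbit sum of the monomial associated to $(G,\sigma_r,\sigma_c)$ is, up to the product-of-connected-components centering in~\eqref{eq:definition:P_G:est} and normalization in~\eqref{eqn:variance of graph2}, the polynomial $\Psi^{\star}_G$. One must be slightly careful that the \emph{centered} polynomials $\overline P^{\star}_G$ still span the same space as the uncentered orbit sums $P_G$ restricted to this setting: this follows because centering each connected component only subtracts lower-order ($\mathcal H$-invariant) terms, so the change of basis from $(P_{G'})_{G'}$ to $(\overline P^{\star}_G)_G$ is triangular with respect to the number of edges and hence invertible. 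Combining Steps 1 and 2 yields the claimed identity.

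\textbf{Main obstacle.} The routine parts are the symmetrization inequality and the orbit-counting bookkeeping. The delicate point — and the one I would spend the most care on — is verifying that passing from the natural orbit-sum basis $(P_G)$ to the centered family $(\overline P^{\star}_G)$ does not change the span of the degree-$\le D$ invariant subspace, i.e. that the centering corrections $\mathbb E[P_{G_l,\sigma_r,\sigma_l}]$ and their products expand into $\mathcal H$-invariant polynomials that are themselves in $\mathrm{span}(1,(\Psi^{\star}_{G'})_{G'\in\mathcal G^{\star}_{\le D}})$ with strictly fewer edges, so that the triangular change of basis argument goes through. This is essentially the content of the constructions in~\cite{CGGV25}; here one just needs to confirm it is compatible with the asymmetric (row-$1$-distinguished) group $\mathcal H$ rather than the fully symmetric group, which is straightforward since fixing $v_1$ only shrinks the relevant symmetry groups and does not affect the connected-component decomposition used in the centering.
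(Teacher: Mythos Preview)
Your proposal is correct and follows essentially the same approach as the paper: symmetrize over the group $\mathfrak S_{[2,n]}\times\mathfrak S_{[d]}$ using Jensen's inequality (exploiting the invariance of the joint law of $(Y,x^\star)$), then identify the resulting invariant subspace with $\mathrm{span}(1,(\Psi^\star_G)_G)$. The paper's proof consists of a single sentence deferring to the analogous detection lemma, so your write-up is in fact more detailed---in particular, your explicit observation that the passage from orbit sums to the centered $\overline P^\star_G$ is a triangular change of basis (by edge count) is a point the paper leaves implicit.
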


Finally, the next theorem states  that, under some assumptions on $K_n,K_d,\lambda$, the invariant polynomial family $(\Psi^\star_G)_{G \in \mathcal G^\star_{\leq D}}$ is almost orthonormal.
\begin{theorem}\label{thm:isorefo2}
Assume that $D \geq 2$ and that, for some $c_{\texttt{s}}>0$ large enough universal constant, we have 
\begin{align}\label{eq:signal1}
	\left(\frac{K_n}{n} \right)\lor \left(\frac{K_d}{d} \right) \lor \left(\frac{\lambda K_n}{\sqrt{n}} \right)\lor \left(\frac{\lambda K_d}{\sqrt{d}} \right) \lor \lambda  \leq D^{-8c_{\texttt{s}}}\enspace . 
\end{align}
Then, for any $(\alpha_{\emptyset}, (\alpha_G)_{G \in \mathcal G^\star_{\leq D}})$, we have 
    $$(1-D^{-c_{\texttt{s}}/4})\left[\alpha^2_{\emptyset}+ \sum_{G \in \mathcal G^{\star}_{\leq D}}\alpha^2_G\right] \leq \mathbb E \left[\left(\alpha_{\emptyset}+ \sum_{G \in \mathcal G^{\star}_{\leq D}} \alpha_G \Psi^{\star}_G\right)^2\right] \leq (1+D^{-c_{\texttt{s}}/4})\left[\alpha^2_{\emptyset}+ \sum_{G \in \mathcal G^{\star}_{\leq D}}\alpha^2_G\right]\enspace .$$
\end{theorem}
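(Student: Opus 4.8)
The plan is to expand the quadratic form $\E[(\alpha_\emptyset + \sum_G \alpha_G \Psi^\star_G)^2]$ and show that all off-diagonal contributions, as well as the deviation of the diagonal terms from $1$, are bounded by $D^{-c_{\texttt{s}}/4}$ times the $\ell_2$ norm $\alpha_\emptyset^2 + \sum_G \alpha_G^2$. Since $\E[\Psi^\star_G]$ vanishes for every nonempty template (the connected components are centered) and the cross term $\alpha_\emptyset \sum_G \alpha_G \E[\Psi^\star_G]$ therefore disappears, it suffices to control, for each pair $(G^{(1)},G^{(2)})$ of templates in $\mathcal G^\star_{\leq D}$, the inner product $\E[\Psi^\star_{G^{(1)}}\Psi^\star_{G^{(2)}}]$ and to show it is $\1\{G^{(1)}=G^{(2)}\}(1 + O(D^{-c_{\texttt{s}}/4}))$ plus a manageable sum of small error terms. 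A Cauchy--Schwarz / Gershgorin-type argument then converts a bound on the matrix of inner products into the stated two-sided inequality: if $\max_G \sum_{G'} |\E[\Psi^\star_G \Psi^\star_{G'}] - \1\{G=G'\}| \leq D^{-c_{\texttt{s}}/4}$, the quadratic form is pinched between $(1\mp D^{-c_{\texttt{s}}/4})\|\alpha\|^2$.

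First I would unfold $\E[\Psi^\star_{G^{(1)}}\Psi^\star_{G^{(2)}}]$ into a sum over pairs of labelings $(\sigma^{(1)},\sigma^{(2)})$, normalized by $\sqrt{\mathbb V^\star(G^{(1)})\mathbb V^\star(G^{(2)})}$. Because each connected component has been centered, a pair of labeled components contributes nothing to the expectation unless they share at least one \emph{edge} (not merely a node): an isolated, uncorrelated centered component integrates to zero. This is the key structural gain of the centering construction of~\cite{CGGV25} over the uncentered $P_G$. Consequently the surviving terms are indexed by ``overlap patterns'' describing how the two labeled bipartite graphs glue along a common subgraph $H$ with at least one edge per shared connected component; the matched part forces $\sigma^{(1)}$ and $\sigma^{(2)}$ to agree on the vertices of $H$. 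I would organize the bound by summing over the isomorphism type of $H$ and over the residual (unmatched) parts of $G^{(1)}$ and $G^{(2)}$.

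Next I would estimate a single such overlap term. Each edge of the common part $H$ that lies in the planted block $S \times T$ contributes a factor $\lambda$ (via $\E_M[Y_{i,k}] = M_{i,k} = \lambda \1\{(i,k)\in S\times T\}$, after integrating over the Bernoulli priors $K_n/n$, $K_d/d$ on membership); each vertex of $H$ carries a probability $\le K_n/n$ or $K_d/d$ of lying in the block; and the combinatorics of choosing the labelings of the unmatched residual vertices, divided by the normalization $\mathbb V^\star$, is $O(1)$ up to the automorphism factors, which cancel appropriately against $|\mathrm{Aut}^\star|$. The upshot is that every overlap term is bounded by a product of the quantities appearing on the left of~\eqref{eq:signal1} --- namely $K_n/n$, $K_d/d$, $\lambda K_n/\sqrt n$, $\lambda K_d/\sqrt d$, and $\lambda$ --- raised to a power that grows with the size of $H$ (at least one factor strictly smaller than $1$ per shared edge/component). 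Since each such quantity is $\le D^{-8c_{\texttt{s}}}$ and the number of templates, overlap types, and residual completions with $\le D$ edges is at most $D^{O(D)} \cdot$(something)\,---\,more precisely the relevant combinatorial multiplicities are polynomial in $n,d$ but are killed by the normalization, leaving only counts bounded by $D^{O(1)}$ per unit of overlap\,---\,a geometric summation over the overlap size yields a total off-diagonal mass of order $D^{-8c_{\texttt{s}}} \cdot D^{O(1)} \le D^{-c_{\texttt{s}}/4}$ for $c_{\texttt{s}}$ large enough. The diagonal term $\E[\Psi^{\star 2}_G]$ is $1$ plus the sub-collection of overlap terms where $G^{(1)}=G^{(2)}=G$ but the two copies are glued along a proper subgraph; these are controlled by the same estimate, giving $1 + O(D^{-c_{\texttt{s}}/4})$.

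The main obstacle is the bookkeeping in the previous paragraph: one must (i) set up the overlap decomposition so that the factorials in $\mathbb V^\star(G)$ cancel cleanly against the number of ways to extend a partial labeling, leaving a clean bound in terms of the five signal quantities, and (ii) check that the exponent attached to each signal quantity is genuinely at least one per shared edge (so that there is no ``zero-cost'' overlap inflating the sum), which is exactly where the ``at least one edge per shared component'' consequence of centering is essential and where the treatment of the distinguished vertex $v_1$ (which may be isolated, and is pinned to row $1$, so carries no $K_n/n$ factor) needs care. Modulo these combinatorial estimates, the reduction from the inner-product matrix to the quadratic-form bound is routine.
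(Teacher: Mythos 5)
Your high-level plan matches the paper's strategy: reduce to bounding the Gram matrix $\Gamma_{G^{(1)},G^{(2)}}=\E[\Psi^\star_{G^{(1)}}\Psi^\star_{G^{(2)}}]$, use a Gershgorin-style row-sum bound to control $\|\Gamma-\mathrm{Id}\|_{op}$, and estimate each inner product by a combinatorial sum over ways the two labeled graphs overlap. However, there is a genuine error in your key structural claim about what centering accomplishes.

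You assert that a pair of labeled centered components contributes nothing unless they share at least one \emph{edge}, on the grounds that ``an isolated, uncorrelated centered component integrates to zero.'' The last clause is the right intuition, but the conclusion overshoots: two components that share a \emph{node} but no edge are \emph{not} uncorrelated in this Bayesian model. For instance, $Y_{i,k_1}$ and $Y_{i,k_2}$ both depend on the Bernoulli indicator $\1\{i\in S_n\}$, so
\[
\E[Y_{i,k_1}Y_{i,k_2}] = \lambda^2\frac{K_n}{n}\Bigl(\frac{K_d}{d}\Bigr)^2 \ne \lambda^2\Bigl(\frac{K_n}{n}\Bigr)^2\Bigl(\frac{K_d}{d}\Bigr)^2 = \E[Y_{i,k_1}]\,\E[Y_{i,k_2}]\ .
\]
What centering actually kills is the contribution from matchings where some connected component of $G^{(1)}$ or $G^{(2)}$ has \emph{no matched node at all} --- that component is then genuinely independent of the rest and its centered factor integrates to zero (this is exactly Proposition~\ref{eq:boundbar}(1), via the class $\mathcal M^\star$). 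Components that share only a node survive and contribute a factor of roughly $K_n/n$ or $K_d/d$ per semi-matched node, which the paper tracks explicitly via the decomposition $|V^{(1)}|+|V^{(2)}|=|V_\Delta|+|(\mathbf M_{\mathrm{SM}})_r|+2|(\mathbf M_{\mathrm{PM}})_r|$ and the resulting bound on $A(\mathbf M)$ in terms of $|E_\Delta|$ \emph{and} $|\mathbf M_{\mathrm{SM}}|$.

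Because your overlap decomposition is indexed by a common subgraph $H$ with at least one shared edge per component, you would systematically miss all matchings where a component intersects only at a node. Those do contribute, both off the diagonal and on the diagonal (your ``glued along a proper subgraph'' analysis would also omit node-only gluings of two copies of $G$). The contributions happen to be small --- $K_n/n, K_d/d \le D^{-8c_{\texttt s}}$ by assumption --- so the final inequality is true, but your proposed bookkeeping cannot see them and hence does not actually establish it. Fixing this requires reorganizing the sum over matchings by the number of semi-matched nodes and the edit-distance $|E_\Delta|$ (with a shadow-type argument to control multiplicities against $|\mathrm{Aut}^\star|$), which is what the paper does; as written, your argument has a hole precisely where you flagged ``at least one edge per shared component'' as essential.
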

As explained below, Condition~\eqref{eq:signal1} is precisely, up to minor changes, the condition under which reliable estimation of $x^\star$ is not achievabel by low-degree polynomials.

\subsubsection{LD  polynomial lower bound.}

Together with Lemma~\ref{lem:reduction::permutation2}, Theorem~\ref{thm:isorefo2} implies that 
\begin{align}\label{eq:badv2}
\mathrm{Corr}^{\star 2}_{\leq D} 
 &\leq \frac{1}{1-D^{-c_{\texttt{s}}/4}}\left[\E^2[x^{\star}]+ \sum_{G\in \mathcal G^{\star}_{\leq D}}\E^2[x^{\star}\Psi^{\star}_G]\right]\enspace .
\end{align}
As a consequence, we only have to control the first moments $\E[x^{\star}\Psi^{\star}_G]$ to control the low-degree correlation $\mathrm{Corr}^{\star}_{\leq D}$. This leads us to the following.

\begin{theorem}[Computational lower bound for estimating $x^{\star}$] \label{prop:compest}
	Fix $D\geq 2$. Under condition~\eqref{eq:signal1}, we then have 
\[
\inf_{f\in \mathcal P_{\leq D}}\mathbb{E}[(f-x^{\star})^2] \geq \frac{\mathbb E[x^{\star 2}]}{2}\enspace .
\]
\end{theorem}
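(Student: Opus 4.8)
The plan is to combine the variational characterization of $\mathrm{Corr}^\star_{\leq D}$ from Lemma~\ref{lem:reduction::permutation2}, the almost-orthonormality of Theorem~\ref{thm:isorefo2}, and the loss lower bound of Lemma~\ref{lem:lower_bound_loss2}; the only genuinely new work is bounding the first moments $\E[x^\star \Psi^\star_G]$. First I would invoke~\eqref{eq:badv2}, which already packages the first two ingredients: it reduces everything to showing that $\E^2[x^\star] + \sum_{G \in \mathcal G^\star_{\leq D}} \E^2[x^\star \Psi^\star_G]$ is at most, say, $\E[x^{\star 2}](1 + D^{-c_{\texttt s}/4})/2$ or so, since then $\mathrm{Corr}^{\star 2}_{\leq D} \leq \E[x^{\star 2}]/2$ (using $(1-D^{-c_{\texttt s}/4})^{-1}(1+D^{-c_{\texttt s}/4})$ is close to $1$ and absorbing the slack), and Lemma~\ref{lem:lower_bound_loss2} immediately gives $\inf_{f \in \mathcal P_{\leq D}} \E[(f-x^\star)^2] \geq \E[x^{\star 2}] - \mathrm{Corr}^{\star 2}_{\leq D} \geq \E[x^{\star 2}]/2$. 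Note $\E^2[x^\star] \leq \E[x^{\star 2}]$ already accounts for a good chunk, so in fact I need the sum over nonempty templates to be a small fraction of $\E[x^{\star 2}]$; more carefully, since $\E[x^{\star 2}] = \E[x^\star] = 1 - (1-K_n/n)$ is itself small (of order $K_n/n$) under~\eqref{eq:signal1}, the right comparison is $\sum_G \E^2[x^\star \Psi^\star_G] \ll \E[x^\star]$, i.e. the contribution of nonempty graphs must be lower order than the ``trivial'' constant-polynomial correlation.

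Next I would compute $\E[x^\star \Psi^\star_G]$ for a fixed template $G \in \mathcal G^\star_{\leq D}$. Writing $x^\star = \mathbf 1\{1 \in S\}$ and expanding $\Psi^\star_G$ via~\eqref{eq:definition:P_G:est}–\eqref{eqn:variance of graph2}, I would condition on $(S,T)$: each factor $P_{G_l,\sigma_r,\sigma_c} - \E[P_{G_l,\sigma_r,\sigma_c}]$ has conditional expectation $\prod_{(v,w)\in E(G_l)} \lambda \mathbf 1\{\sigma_r(v)\in S, \sigma_c(w)\in T\}$ minus its unconditional mean (which is $\lambda^{|E(G_l)|}(K_n/n)^{|V(G_l)|-[v_1\in V(G_l)]}(K_d/d)^{|W(G_l)|}$, adjusting the $v_1$ exponent since $\sigma_r(v_1)=1$ is forced). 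So $\E[x^\star \Psi^\star_G]$ becomes, after summing over the $\GS_V^\star \times \GS_W$ labelings and dividing by $\sqrt{\mathbb V^\star(G)}$, essentially $\sqrt{\mathbb V^\star(G)}$ times a product over connected components of centered moments of indicator products, restricted to the event $1 \in S$ for the component containing $v_1$. The key structural point is that each connected component $G_l$ contributes, per labeling, a factor that — after centering — is bounded by a product of the signal strengths $\lambda K_n/\sqrt n$, $\lambda K_d/\sqrt d$, $K_n/n$, $K_d/d$, $\lambda$ raised to powers linear in the number of edges and vertices of $G_l$; this is exactly the calculation that makes~\eqref{eq:signal1} the relevant threshold. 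I would organize the bound so that $\E^2[x^\star \Psi^\star_G] \leq \E[x^\star] \cdot \rho^{|E(G)|}$ (or a similar clean form) where $\rho \leq D^{-8c_{\texttt s}}$ is the maximum in~\eqref{eq:signal1} — the extra $\E[x^\star]$ coming from the forced $v_1$-component carrying the $1 \in S$ event.

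Then I would sum over $\mathcal G^\star_{\leq D}$. Since the entries of $Y$ are Rademacher and monomials with repeated entries collapse, the relevant templates are simple bipartite graphs, and the number of (equivalence classes of) connected bipartite graphs on $e$ edges with all non-$v_1$ nodes of positive degree is at most $e^{O(e)}$ (a crude but sufficient bound: at most $2^{O(e\log e)}$, or one can cite the standard counting used in~\cite{SchrammWein22,CGGV25}). Multiplying by $\rho^e$ with $\rho \leq D^{-8c_{\texttt s}}$ and $e \leq D$ gives a geometric series $\sum_{e\geq 1} e^{O(e)} D^{-8 c_{\texttt s} e}$ which, for $c_{\texttt s}$ a large enough universal constant, is bounded by, say, $D^{-c_{\texttt s}}$ or even smaller. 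Hence $\sum_{G \in \mathcal G^\star_{\leq D}} \E^2[x^\star \Psi^\star_G] \leq \E[x^\star] \cdot D^{-c_{\texttt s}}$, and combined with $\E^2[x^\star] \leq \E[x^\star] = \E[x^{\star 2}]$ and~\eqref{eq:badv2} this yields $\mathrm{Corr}^{\star 2}_{\leq D} \leq \frac{1 + D^{-c_{\texttt s}}}{1 - D^{-c_{\texttt s}/4}}\E[x^{\star 2}] \cdot \tfrac12 \cdot (\text{something} < 1)$ — more precisely, being a bit careful, $\E^2[x^\star]/\E[x^{\star 2}] = \E[x^\star]$ is itself $\leq D^{-8c_{\texttt s}}$ small under~\eqref{eq:signal1}, so actually \emph{both} the constant term and the sum are lower order, and one comfortably gets $\mathrm{Corr}^{\star 2}_{\leq D} \leq \E[x^{\star 2}]/2$. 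Plugging into Lemma~\ref{lem:lower_bound_loss2} finishes the proof.

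The main obstacle I anticipate is the bookkeeping in the per-component moment bound: correctly tracking how centering each connected component kills the ``low-order'' terms so that only the product of signal parameters to the right powers survives, and in particular handling the asymmetry introduced by the pinned node $v_1$ (the $(n-1)!/(n-|V|)!$ versus $n!/(n-|V|)!$ normalization, and the fact that the $v_1$-component's moment is tied to the $\{1\in S\}$ event rather than to a fresh $K_n/n$ factor). Getting the exponents exactly right — so that the worst-case template is controlled by the maximum in~\eqref{eq:signal1} raised to a power that grows with $|E(G)|$ fast enough to beat the $e^{O(e)}$ template count — is where the constant $c_{\texttt s}$ and the exponent $8$ in~\eqref{eq:signal1} get used, and it mirrors (but with cleaner, more transparent combinatorics than) the arguments of~\cite{SchrammWein22}. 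Everything else is assembling results already proved in the excerpt.
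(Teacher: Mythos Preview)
Your overall framework is correct and matches the paper: invoke \eqref{eq:badv2}, bound each $\E[x^\star \Psi^\star_G]$, sum over templates, then apply Lemma~\ref{lem:lower_bound_loss2}. The only substantive difference is in how the first moments are handled, and here you are planning to work much harder than necessary.

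The key observation you miss is that $\E[x^\star \Psi^\star_G] = 0$ \emph{exactly} whenever $G$ has more than one (edge-containing) connected component. Indeed, if $\mathrm{cc}_G > 1$ then some component $G'$ does not contain $v_1$; for any fixed labeling $(\sigma_r,\sigma_c)$, the factor $P_{G',\sigma_r,\sigma_c} - \E[P_{G',\sigma_r,\sigma_c}]$ depends only on rows and columns disjoint from those used by the rest of the product and from row $1$, so by the independence structure of the Bernoulli prior this factor splits off and contributes $\E[P_{G',\sigma_r,\sigma_c} - \E[P_{G',\sigma_r,\sigma_c}]] = 0$. This is precisely why the centering in the definition of $\overline{P}^\star_G$ was introduced.

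With this in hand, the only templates that contribute are \emph{connected} bipartite graphs containing $v_1$, and for those there is no product over components at all: a direct moment computation (your Case-1-style argument, essentially \eqref{eq:signal} with $\mathbb V^\star$ in place of $\mathbb V$) gives
\[
|\E[x^\star \Psi^\star_G]| \leq \frac{K_n}{n}\left(\frac{\lambda K_n}{\sqrt n}\right)^{|V|-1}\left(\frac{\lambda K_d}{\sqrt d}\right)^{|W|}\lambda^{|E|-|V|-|W|+1} \leq \frac{K_n}{n}\,D^{-8c_{\texttt s}|E|}\enspace,
\]
using $|E|\geq |V|+|W|-1$ for a connected bipartite graph and condition~\eqref{eq:signal1}. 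Squaring and summing over connected templates (at most $(2e)^{2e}$ of them with $e$ edges) yields $\sum_G \E^2[x^\star \Psi^\star_G] \leq (K_n/n)^2 \cdot 2D^{-4c_{\texttt s}}$, which is actually a power of $K_n/n$ stronger than the target you wrote down. So the ``per-component bookkeeping'' you flag as the main obstacle evaporates: there is only ever one component to handle.
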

Following the low-degree conjecture, we focus on the case $D\asymp \log(nds)$ to provide strong evidence that rule out polynomial-time algorithms. In~\eqref{eq:signal1}, the condition $K_n/n\vee K_d/d\vee \lambda\leq D^{-8c_{\mathrm{s}}}$ is mild as, by definition, we have $\lambda \leq 1$, $K_n\leq n$, $K_d\leq d$.
Also, combining the above theorem with the information lower bound of Proposition~\ref{prop:LBinfestM}, we deduce that the optimal low-degree risk 
$\inf_{f\in \mathcal P_{\leq D}}\mathbb{E}[(f-x^{\star})^2]$ is no smaller than $\mathbb E[x^{\star 2}]/2$ as soon as
$$\left[\left(\lambda \frac{K_n}{\sqrt{n}}\right) \land (\lambda\sqrt{K_d})\right]\lor \left[\lambda \frac{K_d}{\sqrt{d}} \right]\leq D^{-c}\enspace ,$$
for a positive constant $c$. Up to polylog terms, this matches the sufficient condition of the estimator  $f_{SE} \lor f_{SE,2}$ --see Proposition~\ref{prop:ubestM}.

Comparing this LD polynomials bounds with the minimax risk, we have provided evidence for a computational-statistical gap for the problem of estimating $x^{\star}$.  Coming back to the ranking problem, we deduce from the computational lower bound of Theorem~\ref{prop:compest}, that it is impossible to build a polynomial-time ranking procedure  that is minimax adaptive over all $\cH(\lambda,K_n,K_d)$. This was already observed e.g. in~\cite{shah2019feeling}. Nevertheless, as already noted, Corollaries~\ref{cor:UBinfM} and~\ref{cor:LBinfM} ensure that the minimax risk for estimating $\pi^*$ over 
$\bigcup_{K_n,K_d,\lambda}\cH(\lambda,K_n,K_d)$ is achieved by a linear-time procedure so that  there is no statistical-computational gap for the worst-case risk over all submatrices models.

\subsection{Reconstruction of the matrix $M$.} The problem of estimating $x^* = \mathbf 1\{\exists k: M_{1,k} >0\}$ is related to that of reconstructing $M$ but is possibly distinct for rectangular settings. Nevertheless, we can easily deduce computational and statistical results for the estimation of $M$. Indeed, estimating the support of $M$ corresponds to inferring 
functionals of the form
$\mathbf 1\{M_{1,1} >0\} = \mathbf 1\{\exists k: M_{1,k} >0\} \mathbf 1\{\exists i: M_{i,1} >0\}$, so that it can be reduced to estimating both $x^* = \mathbf 1\{\exists k: M_{1,k} >0\}$ and its counterpart $\mathbf 1\{\exists i: M_{i,1} >0\}$. As a straightforward consequence, we therefore get, as long as  $\left[\lambda \sqrt{ K_n\land K_d} \right] \lor \left[\lambda \cdot \left[\frac{K_d}{\sqrt{d}} \land \frac{K_d}{\sqrt{d}}\right] \right]$ is large, it is possible to reconstruct $M$ with a small error. Conversely, as long as this quantity small, this is statistically impossible. For polynomial-time algorithms, reconstruction with a small error is possible as soon as $\lambda \cdot \left[\frac{K_d}{\sqrt{d}} \land \frac{K_d}{\sqrt{d}}\right]$
is large, whereas, below this threshold, LD lower bounds provide evidence of polynomial-time algorithms to succeed. Similar results have been for instance established in~\cite{butucea2015sharp,cai2017computational,SchrammWein22}.

\subsection{Comparison between detection and estimation.} The fundamental statistical and computational thresholds for detection, that we presented in Section~\ref{sec:detection}, are unsurprisingly\footnote{Since given an estimator of $\pi^*$, one can compute easily a test for deciding whether $M=M_0$ or not.} lower than their counterpart for the estimation of $\pi^*$ over $ \cH(K_n,K_d,\lambda)$. Namely, the constraints on $K_n,K_d,\lambda$ that we need to ensure detection from resp.~an informational or computational perspective are less constraining than the constraints we need for estimation - and in quite a few regimes of $K_n,K_d,\lambda$ they are strictly less constraining by quite a large amount. 

A straightforward consequence is that this is also the case from a worst-case perspective over $K_n,K_d,\lambda$, namely over $\bigcup_{K_n,K_d,\lambda} \cH(K_n,K_d,\lambda)$. An important remark is that while there is a statistical-computational gap for the problem of detection from Section~\ref{sec:detection} - over $\bigcup_{K_n,K_d,\lambda} \cH(K_n,K_d,\lambda)$ and also over $\overline{\mathbb C}_{\mathrm{iso}}$ - there is no statistical-computational gap for the problem of estimating $\pi^*$ over $\bigcup_{K_n,K_d,\lambda} \cH(K_n,K_d,\lambda)$. Note however that $\bigcup_{K_n,K_d,\lambda} \cH(K_n,K_d,\lambda)$ is much smaller than $\overline{\mathbb C}_{\mathrm{iso}}$, and a remaining topic - that we will tackle in the next section - is then on the thresholds over $\overline{\mathbb C}_{\mathrm{iso}}$. 

\section{Ranking the experts: general case}\label{sec:ranking}

In this section, we come back to the ranking problem over all permuted isotonic matrices $\overline{\mathbb{C}}_{\mathrm{iso}}$ - and we go beyond the sub-models $\cH$.

\subsection{Short review of existing results.}\label{ss:contributions}
  
  In the specific case where $d=1$ (a single column), our model is equivalent to uncoupled isotonic regression and is motivated by optimal transport. Rigollet and Niles-Weed~\cite{rigollet2019uncoupled} have established that the reconstruction error of $M$ is of the order of $n (\tfrac{\log\log(n)}{\log(n)})^2$. 
  
  For the general case $d\geq 1$, Flammarion et al.~\cite{flammarion2019optimal} have shown\footnote[1]{The authors consider the isotonic model as a subcase of a seriation model, where each columns of $M_{\pi^{*-1}}$ is only assumed to be unimodal.} that the optimal reconstruction error in the sense of~\eqref{eq:reconstruction:loss}  is of the order of $n^{1/3}d+n$. However, the corresponding procedure is not efficient. They also introduce a polynomial-time procedure that first estimates $\pi^*$ using a score based on row comparisons on $Y$. Unfortunately, this method only achieves a reconstruction error of the order of $n^{1/3}d + n\sqrt{d}$ which is significantly slower than the optimal one.
  Ma et al.~\cite{ma2021optimal} also focus on the estimation of $\pi^*$ in the isotonic model, but the authors take a different approach. Indeed, they aim to recover $\pi^*$ perfectly under separation assumptions between the rows of $M$, which makes the problem statistically easier. Specifically, under separation assumptions, the authors show that a simple spectral method achieves optimal guarantees for the estimation of $\pi^*$. Informally, their estimator $\hat \pi$ corresponds to the order of the coefficients of the largest left eigenvector of $Y - \overline Y$, where $\overline Y_{i,k} = \tfrac{1}{n}\sum_{i'=1}^n Y_{i',k}$.

Finally, Pilliat et al.~\cite{pilliat2024optimal} prove that, for the problem of estimating $\pi^*$ (in the sense of~\eqref{eq:estpermM})  as well as for the problem of reconstructing $M$, there is no statistical-computational gap. They provide polynomial-time estimators for these problems, based on an intricate algorithm, that we will briefly describe below. We also remind below the (informational) lower bound on the minimax rate of estimation of $\pi^*$ (resp.~ $M$), which can be established by constructing a set of difficult instances that are a precise combination of matrices in $\cH(K_n,K_d,\lambda)$. So that, in what follows, we mostly sum up results from~\cite{pilliat2024optimal}.

\subsection{Estimation of the permutation.} We first consider the problem of estimating the permutation $\pi^*$. The following theorem is a consequence of~\cite{pilliat2024optimal} for the problem of estimating $\pi^*$.
\begin{theorem}[Permutation estimation]\label{th:UB}
For any $n\geq 2$, and $d\geq 1$, define $\rho_{\perm}(n,d)= n^{2/3}\sqrt{d} + n$. We have 
$$ \inf_{\hat \pi~\mathrm{measurable}}  \sup_{M \in \overline{\mathbb{C}}_{\iso}} \mathbb E_{M} [\|M_{\hat \pi^{-1}} - M_{\pi^{*-1}}\|_F^2]\asymp_{\log} \rho_{\perm}(n,d) \enspace,$$
where $u \asymp_{\log} v$ means that, for some positive constants  $c_{0}\log^{-c'_0}(nd) \leq u/v \leq c_1\log^{c'_1}(nd)$. Besides, there exists a polynomial-time estimator that, up to polylog terms, uniformly achieves this risk bound.
\end{theorem}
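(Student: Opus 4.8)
\textbf{Proof proposal for Theorem~\ref{th:UB}.}

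The plan is to prove matching upper and lower bounds on the minimax ranking error, with the rate $\rho_{\perm}(n,d) = n^{2/3}\sqrt d + n$ appearing up to polylogarithmic factors. We treat the two directions separately.

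\emph{Lower bound.} The strategy is to exhibit a family of hard instances inside $\overline{\bbC}_{\iso}$ built by gluing together block matrices from the collections $\cH(\lambda,K_n,K_d)$ studied in Section~\ref{sec:rankingrest}, and then invoke the informational lower bound of Corollary~\ref{cor:LBinfM} together with a careful choice of parameters. Concretely, to recover the $n$ term, one uses the regime where $M$ looks like a block matrix with $K_n = n/2$, $K_d = \sqrt{nd}\wedge d$, $\lambda \asymp (nd)^{-1/4}\vee n^{-1/2}$; Corollary~\ref{cor:LBinfM} already gives a $c(n+\sqrt{nd})$ lower bound for that sub-family. To get the stronger $n^{2/3}\sqrt d$ term one must superimpose $\Theta(n^{1/3})$ independent such block structures at different scales/locations along the rows (a ``staircase of blocks''), so that an estimator of $\pi^*$ is forced to solve all of them simultaneously; by additivity of the Frobenius loss over disjoint row-blocks and of the mutual information, the per-block errors of order $n^{1/3}\sqrt{d}$ (roughly $K_n\cdot K_d\cdot\lambda^2$ in the hard regime with $K_n\asymp n^{2/3}$) accumulate to $n^{2/3}\sqrt d$. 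The main technical point here is bookkeeping: one fixes a prior on $\pi^*$ and on the nested block structure, reduces ranking error to a sum of testing problems via a generalized Fano / Assouad argument on the row-ordering, and checks that each sub-test is in the impossible regime of Proposition~\ref{prop:LBinfestM}. This is exactly the construction carried out in~\cite{pilliat2024optimal}, and we would cite it for the detailed verification.

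\emph{Upper bound.} The strategy is to describe the polynomial-time estimator of~\cite{pilliat2024optimal} and explain why its risk matches $\rho_{\perm}(n,d)$ up to polylogs. The algorithm is hierarchical: at the top level it uses the peeling/discretization of Lemma~\ref{lem:peeling} to argue that the ``signal'' in any sub-block of $M$ is, up to a $O(\mathrm{polylog}(nd))$ factor, captured by a genuine block matrix $\lambda\1\{S\times T\}$; on such a block, estimating the relevant part of $\pi^*$ reduces to estimating the support $S$, which Corollary~\ref{cor:UB:estimation} shows can be done by the efficient procedures $f_{SE}\vee f_{SE,2}$ (the scan statistic $f_{SE,2}^{(m)}$ is \emph{not} needed, since over the union $\bigcup_{K_n,K_d,\lambda}\cH$ there is no computational gap). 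Then one recurses: having split the rows into an ``above-threshold'' group and a ``below-threshold'' group, one re-applies the same block-detection and support-estimation primitives within each group at a finer scale, and similarly one iteratively identifies informative columns. Summing the errors incurred at each of the $O(\log(nd))$ levels of the recursion — each level contributing at most $O(n+\sqrt{nd}\cdot\mathrm{poly})$ to the squared Frobenius loss in the worst block configuration, with the $n^{2/3}\sqrt d$ term arising precisely when the optimal block has $K_n\asymp n^{2/3}$ — yields the claimed $\rho_{\perm}(n,d)$ up to $\mathrm{polylog}(nd)$ factors, and the total running time is polynomial because each primitive is.

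\emph{Main obstacle.} The genuinely hard part is the upper bound, and specifically controlling error \emph{propagation} across the levels of the hierarchical algorithm: an incorrect split at a coarse scale can corrupt all finer-scale decisions, so one needs the block-estimation primitives to be \emph{robust} — to return guarantees even when the input rows/columns are contaminated by rows/columns that do not belong to the current block. Making this robustness quantitative, and showing that the accumulated contamination stays below the target error at every level, is the core of the argument in~\cite{pilliat2024optimal}; the lower bound, by contrast, is a (delicate but) standard multi-scale Fano construction. Since a self-contained treatment would essentially reproduce that paper, we state Theorem~\ref{th:UB} as a consequence of~\cite{pilliat2024optimal} and in the remainder of this section only sketch how the submatrix primitives of Sections~\ref{sec:detection}--\ref{sec:rankingrest} assemble into the full ranking procedure.
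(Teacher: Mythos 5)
Your lower-bound plan is essentially the same as the paper's: stack $m$ shifted block matrices along the rows, take $m\asymp n^{1/3}$, $\lambda = 1/m$, $K_N\asymp m^2$, $K_d\asymp\sqrt d/\lambda$, and invoke Proposition~\ref{prop:LBinfestM} per group to get a per-group loss of order $K_NK_d\lambda^2\asymp n^{1/3}\sqrt d$, summing to $n^{2/3}\sqrt d$. That part is fine.

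The upper-bound sketch has a genuine gap and also misdescribes the algorithm. First, the accounting does not close: Corollary~\ref{cor:UBinfM} gives a per-block ranking error of order $n+\sqrt{nd}$ (up to logs), and with $O(\log(nd))$ recursion levels your bookkeeping would yield $\tilde O(n+\sqrt{nd})$. But $n^{2/3}\sqrt d\geq\sqrt{nd}$ for all $n\geq 1$, so the target rate is strictly larger; the extra $n^{2/3}\sqrt d$ term must come from the cost of \emph{imperfectly sorting nearby experts inside each block} rather than from the block-support primitives themselves, and your sketch never accounts for that. In short, the statement ``each level contributes $O(n+\sqrt{nd}\cdot\mathrm{poly})$ and levels sum to $n^{2/3}\sqrt d$'' is dimensionally inconsistent. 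Second, the algorithm of~\cite{pilliat2024optimal} that the paper summarizes is \emph{not} a naive hierarchical block-detector built from $f_{SE}\vee f_{SE,2}$: it is an iterative soft-ranking procedure maintaining a weighted comparison graph between experts, whose two key ingredients are (i) a dimension-reduction step that builds upper/lower envelopes for a pivot set $P$ from already-ranked sets $O,I$ and keeps only the columns where those envelopes are far apart, and (ii) a spectral step on the reduced matrix, exploiting the near-rank-one structure granted by the peeling lemma, to update the comparison weights. The peeling intuition you invoke (Lemma~\ref{lem:peeling}) is indeed the motivation for why a spectral step is sensible, but it does not, by itself, yield a recursive block algorithm with the correct rate; your ``main obstacle'' paragraph correctly flags robustness to contamination, but the mechanism the paper actually uses to control it (envelopes + spectral weights, not repeated support estimation) is missing from your sketch.
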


\medskip 

\noindent
{\bf Lower bound.} The lower bound in this theorem does not follow directly from results over any set $\cH(K_n,K_d,\lambda)$ - as the minimax rate over any $\cH(K_n,K_d,\lambda)$ is much smaller than $n+n^{2/3}\sqrt{d}$ in many cases - but it can nevertheless be deduced from previous results by decomposing the matrices $M$ in a well-chosen way.\\
First, note that the lower bound of order $n$ is a direct consequence of Corollary~\ref{cor:LBinfM}. We therefore focus on the second part, namely the lower bound of order $n^{2/3} \sqrt{d}$, which is only relevant when $\sqrt{d} \geq n^{1/3}$. This can be deduced from Proposition~\ref{prop:LBinfestM}. Indeed, we can divide our $n$ experts in $m$ groups with $N = n/m$ experts - assuming that $n/m\in \mathbb N$. So that $M$ can be written through stacking $m$ matrices of dimension $N\times d$ - and we write $M^{(l)}$ for the $l$-th matrix where $l\in [m]$. In what follows, we consider the problem where each matrix $M^{(l)}$ can be written as $M^{(l)} = \lambda \times (l-1)+ \bar M^{(l)}$ where $\bar M^{(l)} \in \cH(K_N,K_d,\lambda)$. This is within our shape constraint as long as $\lambda m \leq 1$ - and we equalize the two terms and take $\lambda = 1/m$.\\
We focus on the case where $K_N \asymp (1/\lambda^2)\land (N/2) = m^2 \land [n/(2m)]$ and $K_d \asymp (\sqrt{d}/\lambda) \land d$. This satisfies the condition of Proposition~\ref{prop:LBinfestM}, so that we can prove that the probability of making a mistake when ordering any two experts is of constant order - which leads to a squared Frobenius error when ordering the two experts of order $K_d \lambda^2$. Also, setting $m^2 = n/m$ (equalizing the two terms in the definition of $K_N$), leads to taking $m = n^{1/3}$ and leads to $K_d \asymp \sqrt{d}/\lambda= \sqrt{d}m^{1/3} \leq d$ as $\sqrt{d} \geq n^{1/3}$.\\
The permutation loss over the full matrix $M^{(l)}$ is then of order $K_dK_N \lambda^2$, which in turn leads to a full minimax permutation loss $\mathbb E_M \|M_{\hat \pi^{-1}} - M_{\pi^{*-1}}\|_F^2$ of order $mK_dK_N \lambda^2$ by summing over all $l$'s. This is of order $n^{2/3}\sqrt{d}$, as expected. In Figure~\ref{fig:sample_figure3}, we illustrate the typical matrices $M$ that are used in the lower bound construction.

\medskip

\noindent
{\bf Upper bound and associated algorithm.}
The algorithm in~\cite{pilliat2024optimal} is an iterative soft ranking $(\algoPrincipal)$ procedure, which gives an estimator $\hat \pi$ based on the observations.  Informally, this method iteratively updates a weighted directed graph between experts, where the weight between any two experts quantifies the significance of their comparison. The procedure increases the weights at each step. After it stops, the final estimator is an arbitrary permutation $\hat \pi$ that agrees as well as possible  with the final weighted directed graph.

As mentioned by Liu and Moitra \cite{liu2020better}, it is hopeless to use only local information between pairs of experts to obtain a rate of order $n^{7/6}$ up to polylogs, and we must exploit global information. Still, we do it in a completely different way of~\cite{liu2020better} who were building upon the bi-isotonicity of the matrix --here, the matrix $M$ is only, up to a permutation $\pi^*$, isotonic. 
One first main ingredient of our procedure is a new dimension reduction technique. At a high level, suppose that we have partially ranked the rows in such a way that, for a given triplet ($P$, $O$, $I$) of subsets of $[n]$, we are already quite confident that experts in $P$ are below those in $I$ and above those in $O$. Relying on the shape constraint of the matrix $M$, it is therefore possible to build high-probability confidence regions for rows in $P$ based on the rows in $O$ and the rows in $I$ - computing column-averages for each question over resp.~$O$ and $I$, which we term the two "envelopes". If, for a question $k$, the confidence region is really narrow - i.e.~the two envelopes are close from each other - this implies that all experts in $P$ take almost the same value on this column. As a consequence, this question is almost irrelevant for further comparing the experts in $P$.
In summary, our dimension reduction technique selects the set of questions for which the confidence region of $P$ is wide enough - i.e.~the two envelopes are far away from each other - and in that way reduces the dimension of the problem while keeping most of the relevant information. This is illustrated in the right panel of Figure~\ref{fig:sample_figure3}. 

The second main ingredient, once the dimension of $P$ is reduced, is to use a spectral method to capture some global information shared between experts and then to compute the updates of the weighted graph. Although this spectral scheme already appeared  in recent works~\cite{pilliat2022optimal,liu2020better},  those are used here for updating the weight of the comparison graph rather than performing a clustering as in~\cite{liu2020better}. The reason why performing a spectral method is a reasonable idea is tightly related to the fact that any isotonic matrix can be, as we explained in Section~\ref{sec:peeling} and in Lemma~\ref{lem:peeling}, in some sense approximated by a rank one matrix - see also Figures~\ref{fig:sample_figure} and~\ref{fig:sample_figure2} for illustrations.

\begin{figure}
\centering
\begin{minipage}[c]{0.45\textwidth}
\centering
    \includegraphics[width=3.0in]{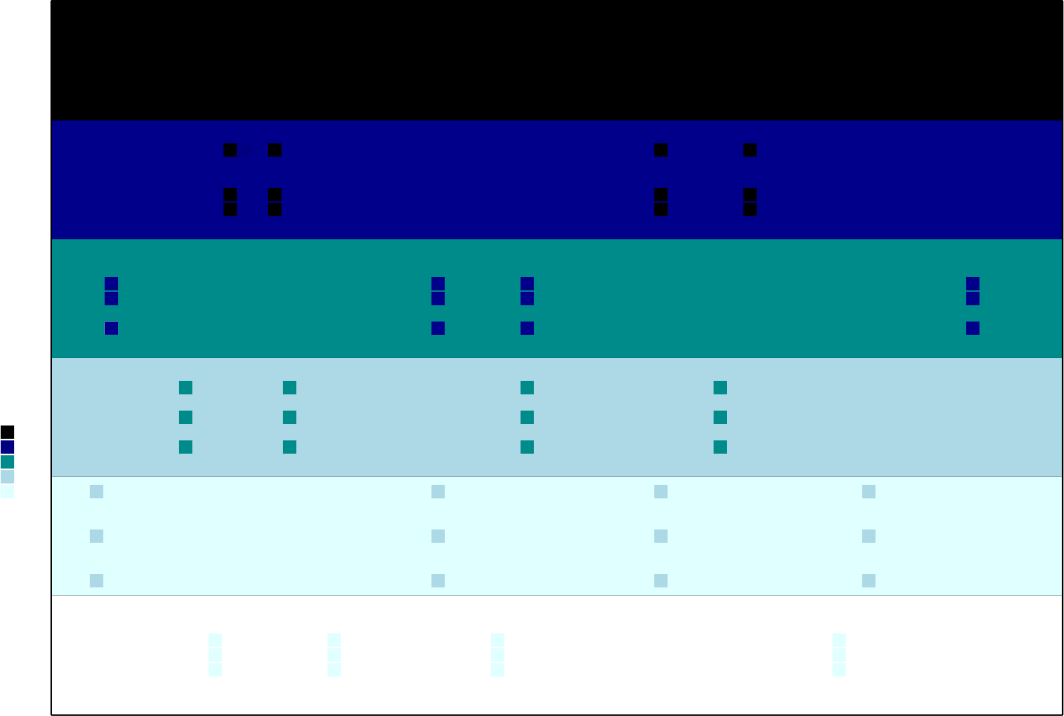}
\end{minipage}
\begin{minipage}[c]{0.45\textwidth}
\centering
    \includegraphics[width=3.0in]{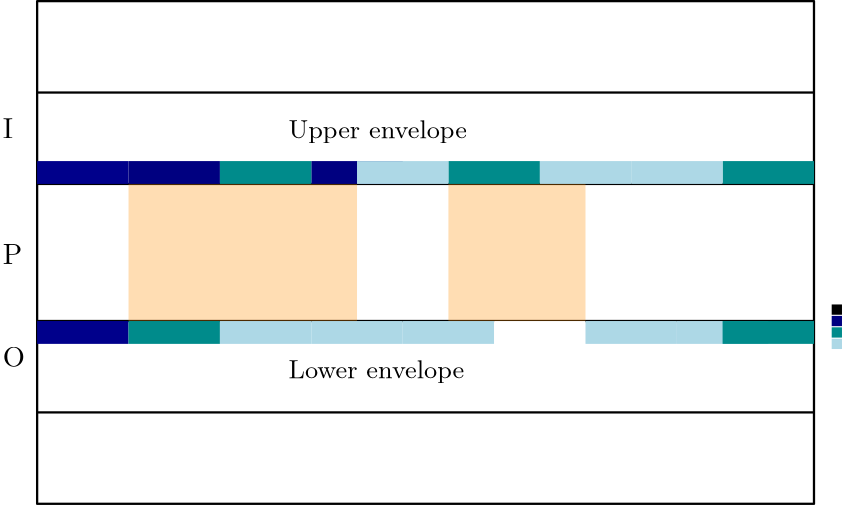}
\end{minipage}
    \caption{Both pictures represent $M$ (after oracle permutation of the rows), the rectangle's axis being resp.~the $j \in [d]$ index (abscissa) and the $i\in [n]$ index (ordinate). The darker the colors, the higher the entries of $M$. On the \emph{left panel}, we represent a typical instance of a matrix $M$ used in the lower bound construction. We have as an illustration $m=6$ groups of experts and in each of the groups, we have $N=8$ experts, among which $K_N=3$ of them are better on $K_d=4$ questions than the others (see the small sub-matrix in each groups which represent the $K_n$ experts and $K_d$ questions).  On the \emph{right panel}, we illustrate the idea of dimension reduction for the proposed polynomial-time procedure. We represent the three sets $O,P,I$, and the upper and lower envelope for $P$. Only questions whose upper and lower envelopes are significantly different are selected, corresponding to the entries in orange in $P$.} \label{fig:sample_figure3}
\end{figure}

\subsection{Reconstruction of the matrix.} We now turn to the problem of reconstructing the signal matrix $M$. Obviously, this problem is at least as difficult as if we knew in advance the permutation $\pi^*$. In this favorable situation, estimating $M$ simply amounts to estimating $d$ isotonic vectors.

\begin{theorem}[Matrix estimation]
For any $(n,d)$, define  $\rho_{\reco}(n,d)=n^{1/3}d + n$. We have 
$$ \inf_{\hat M~\mathrm{measurable}}  \sup_{M\in \overline{\mathbb C}_{\iso}} \mathbb E_{M} [\|\hat M - M\|_F^2]\asymp_{\log } \rho_{\reco}(n,d) \enspace .$$
 Besides, there exists a poly-time estimator uniformly achieving this rate up to a poly-logarithmic factors in $n,d$.
\end{theorem}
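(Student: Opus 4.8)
The plan is to prove the two halves of the statement by separate, essentially standard, arguments: the (polynomial-time) upper bound follows from a two–stage estimator — estimate $\pi^*$, then run isotonic regression column by column — while the lower bound combines a ``parametric'' bound of order $n$ with a packing of $d$ independent isotonic-regression instances. For the upper bound, let $\hat\pi$ be the polynomial-time estimator of \cref{th:UB}, so that $\mathbb E_M\|M_{\hat\pi^{-1}}-M_{\pi^{*-1}}\|_F^2\lesssim_{\log}\rho_{\perm}(n,d)$; observe that $\rho_{\perm}(n,d)=n^{2/3}\sqrt d+n=\sqrt{n\cdot(n^{1/3}d)}+n\le \tfrac32\rho_{\reco}(n,d)$ by the arithmetic–geometric inequality, so the permutation error is always affordable at the target rate. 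Reorder the data as $Y_{\hat\pi^{-1}}$ and, for each column $k\in[d]$, let $\hat N_{\cdot,k}$ be the isotonic least-squares estimator, i.e.\ the Euclidean projection $\Pi_{\cC}$ of $(Y_{\hat\pi^{-1}})_{\cdot,k}$ onto the cone $\cC$ of non-increasing vectors of $\mathbb R^n$ (computable in near-linear time by the pool-adjacent-violators algorithm); output $\hat M:=\hat N_{\hat\pi}$. Since a row permutation is a Frobenius isometry, $\|\hat M-M\|_F^2=\|\hat N-M_{\hat\pi^{-1}}\|_F^2$, so it suffices to control, for each column, $\|\hat N_{\cdot,k}-\theta^{(k)}\|^2$ where $\theta^{(k)}:=(M_{\hat\pi^{-1}})_{\cdot,k}$.

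Writing $\xi^{(k)}$ for the centered, bounded noise of column $k$, the triangle inequality together with the $1$-Lipschitzness of the projection $\Pi_{\cC}$ gives
\[
\|\hat N_{\cdot,k}-\theta^{(k)}\|\ \le\ 2\,\|\theta^{(k)}-\Pi_{\cC}\theta^{(k)}\|+\big\|\Pi_{\cC}(\Pi_{\cC}\theta^{(k)}+\xi^{(k)})-\Pi_{\cC}\theta^{(k)}\big\|\enspace .
\]
The first term is a misspecification term: $\theta^{(k)}$ is a row-permutation of the non-increasing vector $(M_{\pi^{*-1}})_{\cdot,k}$, so its best non-increasing approximant is no further than its sorted version, which is exactly $(M_{\pi^{*-1}})_{\cdot,k}$; hence $\sum_k\|\theta^{(k)}-\Pi_{\cC}\theta^{(k)}\|^2\le\|M_{\hat\pi^{-1}}-M_{\pi^{*-1}}\|_F^2$. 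The second term is the error of isotonic regression with a genuinely non-increasing, $[0,1]$-valued signal of range at most $1$, for which the classical minimax bound yields $\mathbb E\|\cdot\|^2\lesssim n^{1/3}+\log n$ uniformly. Summing over $k$, taking expectations, and using \cref{th:UB} for the first term gives $\mathbb E_M\|\hat M-M\|_F^2\lesssim_{\log}\rho_{\perm}(n,d)+n^{1/3}d\lesssim_{\log}\rho_{\reco}(n,d)$. The one point requiring care is that $\hat\pi$ is computed from $Y$ and the regression re-uses $Y$, so the reordering and the noise $\xi^{(k)}$ are not independent (in particular $\Pi_{\cC}\theta^{(k)}$ above is data-dependent); this is resolved in a standard way by a randomized sample-splitting of the observations — one part feeding $\hat\pi$, the other the regression — at the cost of constants only, or alternatively by invoking directly the self-contained polynomial-time procedure of~\cite{pilliat2024optimal}, which already delivers the stated rate.

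For the lower bound, the term of order $n$ comes from restricting to matrices with constant rows $M_{i,k}=c_i$ for all $k$, with the $c_i$ non-increasing and confined to $[1/2-n^{-1/2},1/2]$, so that $M\in\mathbb C_{\iso}\subset\overline{\mathbb C}_{\iso}$; each row is then $d$ i.i.d.\ sign observations with mean $c_i$, estimating $c_i$ incurs squared error $\gtrsim 1/d$, and a Fano/Assouad argument over a product prior on the $c_i$ yields $\inf_{\hat M}\sup_M\mathbb E_M\|\hat M-M\|_F^2\gtrsim d\cdot n\cdot(1/d)=n$. The term of order $n^{1/3}d$, relevant only when $n^{1/3}d\gtrsim n$, comes from fixing $\pi^*=\mathrm{id}$ and letting the $d$ columns be independent copies of a worst-case isotonic-regression instance on $[0,1]^n$ observed through one sign per coordinate; the classical $n^{1/3}$ lower bound for estimating a monotone sequence (via perturbations of a staircase signal) then adds up over the $d$ columns, and the resulting instances still lie in $\overline{\mathbb C}_{\iso}$. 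Combining the two constructions gives $\rho_{\reco}(n,d)\gtrsim n+n^{1/3}d$ up to absolute constants; these information-theoretic bounds are also the ones established in~\cite{flammarion2019optimal}.

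The main obstacle is the upper bound rather than the lower bound: beyond quoting the permutation-estimation guarantee of \cref{th:UB}, one must show that column-wise isotonic regression is robust \emph{both} to the misspecification created by an imperfect $\hat\pi$ — handled by the projection inequality displayed above, which trades it for the already-controlled permutation error — \emph{and} to the statistical coupling between $\hat\pi$ and the noise it is applied to, which is handled by sample splitting (or by appealing to the algorithm of~\cite{pilliat2024optimal}). Everything else is a combination of \cref{th:UB}, the textbook rate for isotonic regression, and the arithmetic–geometric inequality $n^{2/3}\sqrt d\le\tfrac12(n+n^{1/3}d)$ that makes the permutation error negligible against the reconstruction rate.
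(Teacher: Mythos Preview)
Your proposal follows essentially the same approach as the paper: for the upper bound, both use the two-stage estimator (estimate $\hat\pi$ via \cref{th:UB}, then project $Y_{\hat\pi^{-1}}$ onto the isotonic cone) together with the observation $\rho_{\perm}(n,d)\lesssim\rho_{\reco}(n,d)$, and for the lower bound both superpose the classical $n^{1/3}d$ isotonic-regression packing with a separate $\Omega(n)$ term. The only real difference is in the $\Omega(n)$ construction --- the paper deduces it from the submatrix model via arguments parallel to \cref{cor:LBinfM}, whereas you use constant-row matrices and Assouad; note however that your interval width should be $\asymp d^{-1/2}$ rather than $n^{-1/2}$ to obtain per-coordinate error $\asymp 1/d$, and that a product prior on the $c_i$ is only compatible with $\overline{\bbC}_{\iso}$ (not $\bbC_{\iso}$), so you should drop the monotonicity constraint on the $c_i$ and embed directly in $\overline{\bbC}_{\iso}$.
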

In particular, since $\rho_{\perm}(n,d) \leq \rho_{\reco}(n,d)$ in all regimes in $n$, $d$, this proposition implies that the reconstruction of a permuted isotonic matrix is harder than the estimation of the permutation.

\noindent
{\bf Lower bound.} A first part of the lower bound - namely the part of order $n$ - directly follows from our results over worst-case sets $\cH$ for the matrix reconstruction problem-- the arguments are similar to those in Corollary~\ref{cor:LBinfM}. The second part of the lower bound - namely the part of the order $dn^{1/3}$ - follows from classical lower bound arguments in the isotonic regression model (not permuted, i.e.~where $\pi^*$ is known) - see e.g.~\cite{mao2020towards} where they provide a lower bound on the problem of estimating $d$ isotonic vectors (see in particular Theorem 3.1 therein).

\medskip

\noindent
{\bf Upper bound and associated algorithm.} To build an optimal estimator of $M$, we start from a polynomial estimator $\hat{\pi}$ achieving the ranking risk of  Theorem \ref{th:UB} and we then estimate an isotonic matrix based on $\hat{\pi}$. This approach is similar to what is done in~\cite{mao2020towards,pilliat2022optimal} for related bi-isotonic models. Based on $\hat \pi$, we define $\hat M_{\iso}$ as the projection of $Y_{\hat \pi^{-1}}$ onto the convex set of isotonic matrices. More precisely, we set
\begin{equation*}
	\hat M_{\iso} = \argmin_{\tilde M \in \bbC_{\iso}} \| \tilde M - Y_{\hat \pi^{-1}}\|_F^2 \enspace .
\end{equation*}
Based on $\hat \pi$, this minimization problem can be done efficiently. Based on our results on $\hat \pi$ from Theorem~\ref{th:UB}, it is not difficult to prove that the classical bounds on isotonic regression carry over to regression $ Y_{\hat \pi^{-1}}$, which is approximately isotonic. So that the reconstruction loss of $[\hat M_{\iso}]_{\hat{\pi}}$ is of the same order as the loss incurred when performing isotonic regression, namely $n^{1/3}d$ plus the permutation loss, namely $\rho_{\perm}(n,d)$, which brings us to our result --see~\cite{pilliat2024optimal} for details.

  \subsection{Implication for other models and connection to the literature.}\label{subsec:literature}

The isotonic model is quite general and encompasses both the bi-isotonic model for crowdsourcing problems as well the SST model for tournament problems. Also, in a general setup, the model is typically defined with the possibility of having missing data, and with a more general noise structure. In that case, we observe in total $N$ observations where $N\sim \mathcal P(\zeta)$ for some sampling effort $\zeta >0$ and each observation is a noisy observation of the matrix $M$ where the position is taken uniformly at random. In this case, each observation $t\in [N]$ is defined as follows:
\begin{equation}\label{eq:model_0}
Y_{t} = M_{i_t,j_t} + \varepsilon_{t} \,
\end{equation}
where $\varepsilon_{t}$ is an independent sub-Gaussian noise, and where $i_t,j_t$ are sampled uniformly at random from $[n]\times [d]$.

Let us first focus on the SST model which corresponds to the case where $n=d$ together with a bi-isotonicity and a skew-symmetry assumption. In the full observation scheme (related to the case $\zeta=1$) where one observes the noisy matrix $n\times n$, Shah et al.~\cite{shah2016stochastically} have established that the optimal rates for estimating $\pi^*$ and reconstructing the matrix $M$ are of the order of $n$. In contrast, their efficient procedure which estimates $\pi^*$ according to the row sums of $Y$ only achieves the rate of $n^{3/2}$. In more recent years, there has been a lot of effort dedicated to improving this $\sqrt{n}$ statistical-computational gap. The SST model was also generalized to partial observations by  \cite{chatterjee2019estimation}, which corresponds to  $\zeta \leq 1$. They introduced an efficient procedure that targets a specific sub-class of the SST model, and that achieves a rate of order $n^{3/2}\zeta^{-1/2}$ in the worst case for matrix reconstruction. 

A few important contributions tackling both the bi-isotonic model and the SST model made steps towards better  understanding the statistical-computational gap. We first explain how their results translate in the SST model. Mao et al.~\cite{mao2020towards, mao2018breaking} introduced a polynomial-time procedure handling partial observations and achieving a rate of order $n^{5/4}\zeta^{-3/4}$ for matrix reconstruction. Nonetheless, \cite{mao2020towards} failed to exploit global information shared between the players/experts -- as they only compare players/experts two by two -- as pointed out by \cite{liu2020better}. Building upon this remark, \cite{liu2020better} managed to get the better rate $n^{7/6+o(1)}$ with a polynomial-time method in the case $\zeta = n^{o(1)}$.  

Let us turn to the more general bi-isotonic model. Here, the rectangular matrix $M \in \bbR^{n \times d}$ is bi-isotonic up an unknown permutation $\pi^*$ of the rows and an unknown permutation $\eta^*$ of the columns. Since $M$ is not necessarily square, this model can be used in more general crowd-sourcing problems. The optimal rate for reconstruction in this model with partial observation has been established in \cite{mao2020towards} to be of order $\nu(n,d,\zeta):= (n\lor d)/\zeta + \sqrt{nd/\zeta}\land n^{1/3}d\zeta^{-2/3}\land d^{1/3}n\zeta^{-2/3}$ up to polylog factors, in the non-trivial regime where $\zeta \in [1/(n\land d), 1]$. However, the polynomial-time estimator provided by Mao et al. \cite{mao2020towards} only achieves the rate $n^{5/4}\zeta^{-3/4} + \nu(\zeta, n, d)$. In a nutshell, their procedure computes column sums to give a first estimator of the permutation of the questions. Then, they compare the experts on aggregated blocks of questions, and finally compare the questions on aggregated blocks of experts. As explained in the previous paragraph for SST models, Liu and Moitra~\cite{liu2020better}  improved this rate to $n^{7/6 + o(1)}$ in the square case $(n=d)$, with a subpolynomial number of observations per entry $(\zeta = n^{o(1)})$. 	
Their estimators of the permutations $\pi^*$, $\eta^*$ were based on hierarchical clustering and on local aggregation of high variation areas. Both \cite{liu2020better,mao2020towards} made heavily use of the bi-isotonicity structure of $M$ by alternatively sorting the columns and rows. As mentioned for the SST model, the order of magnitude $n^{7/6+o(1)}$ remains nevertheless suboptimal, and whether there exists an efficient algorithm achieving the optimal rate in this bi-isotonic model remains an important open problem. 

As a matter of fact, the results in~\cite{pilliat2024optimal} also carry out to the bi-isotonic model and the SST model. In the square case for the bi-isotonic model $(n=d)$, ~\cite{pilliat2024optimal} reach in polynomial-time the upper bound $n^{7/6}\zeta^{-5/6}$ up to polylog factors, for both permutation estimation and matrix reconstruction.

The optimal loss and the known polynomial-time upper bounds for the isotonic, bi-isotonic with two permutations and SST models are summarized in \Cref{tab:comparison}. For the sake of simplicity, we focus in this table on the specific case $n=d$ and $\zeta \in [1/n, 1]$.

\begin{table}[h]
\begin{center}
	\begin{tabular}{|P{2cm} | P{1cm} | P{3.3cm} | P{3.7cm} | P{3.7cm} |}
		\noalign{\global\arrayrulewidth=0.3mm}
		\arrayrulecolor{gray}
		  \hline
		  \multicolumn{2}{|c|}{\multirow{2}{*}{\vspace{-1cm}$\begin{array}{cc} 
			\text{Different models, with}\\
			\text{$M \in \bbR^{n\times n}$}\\
		\end{array}$}} & {\bf Isotonic} & Bi-isotonic($\pi^*, \eta^*)$ & SST \\ 
		  \multicolumn{2}{|c|}{}&{\bf$M_{\pi^{*-1}}$ has non-increasing columns } &$M_{\pi^{*-1} \eta^{*-1}}$ has non-increasing columns and rows& $M_{\pi^{*-1} \pi^{*-1}}$ has non-increasing columns and rows, and $M_{i,k} + M_{k,i} = 1$\\
		  \hline
		  Permutation estimation & Poly. Time & {\bf $n^{7/6}\zeta^{-5/6}$ \text{\cite{pilliat2024optimal}}} &
		  $\begin{array}{cc} 
			n^{7/6+o(1)}  & \hspace{-0.3cm}\text{\cite{liu2020better}}(\zeta=n^{o(1)})\\
			n^{7/6}\zeta^{-5/6} & \text{\cite{pilliat2024optimal}}\\
		\end{array}$
		& $\begin{array}{cc} 
			n^{7/6+o(1)}  & \hspace{-0.3cm} \text{\cite{liu2020better}}(\zeta=n^{o(1)})\\
			n^{7/6}\zeta^{-5/6} & \text{\cite{pilliat2024optimal}}\\
		\end{array}$\\
		\cline{2-5}
		~ & optimal rate& {\bf $n^{7/6}\zeta^{-5/6}$ \text{\cite{pilliat2024optimal}}} &
		  $n/\zeta$  \cite{mao2020towards} & $n/\zeta$ \cite{mao2020towards}\\
		  \hline
		  Matrix \newline reconstruction &Poly. Time & 
		  $\begin{array}{cc} 
			n^{3/2} & (\zeta=1)\text{\cite{flammarion2019optimal}}\\
			n^{4/3}\zeta^{-2/3} & \text{\cite{pilliat2024optimal}}\\
		\end{array}$
		& $\begin{array}{cc} 
			n^{7/6+o(1)}  & \hspace{-0.3cm} \text{\cite{liu2020better}}(\zeta=n^{o(1)})\\
			n^{5/4}\zeta^{-3/4} & \text{\cite{mao2020towards}}\\
			n^{7/6}\zeta^{-5/6} & \text{\cite{pilliat2024optimal}}\\
		\end{array}$ & $\begin{array}{cc} 
			n^{7/6+o(1)}  & \hspace{-0.3cm} \text{\cite{liu2020better}}(\zeta=n^{o(1)})\\
			n^{5/4}\zeta^{-3/4} & \text{\cite{mao2020towards}}\\
			n^{7/6}\zeta^{-5/6} & \text{\cite{pilliat2024optimal}}\\
		\end{array}$\\
		\cline{2-5}
		  ~ & optimal rate & $n^{4/3}\zeta^{-2/3}$ \cite{flammarion2019optimal}\newline (also \text{\cite{pilliat2024optimal}}) &
		  $n/\zeta$  \cite{mao2020towards} & $n/\zeta$ \cite{mao2020towards}\\
		  \hline
		\end{tabular} \enspace 
\end{center}
\caption{For the isotonic model, we display here the optimal rate for permutation estimation (resp. matrix reconstruction)  - see Equation~\eqref{eq:estpermM}. For the two other columns, the optimal rates are similarly defined as minimax loss over the corresponding models. The Poly. Time rows correspond to state-of-the art rates achieved by polynomial-time methods. All the rates are given up to polylogarithmic factors in $n$.}
\label{tab:comparison}
\end{table}
Finally, we mention the specific model where the matrix $M$ is bi-isotonic up to a single permutation $\pi^*$ acting on the rows. This corresponds to the case where $\eta^*$ is known in the previous paragraph~\cite{mao2020towards,pilliat2022optimal,liu2020better}, or equivalently, to our isotonic model with the constraint that all the rows are non-increasing, that is $M_{i,k} \geq M_{i,k+1}$. It is possible to leverage the shape constraints on the rows, e.g. by using change-point detection techniques, to build efficient and optimal estimators--see \cite{pilliat2022optimal,liu2020better} at a faster rate than for bi-isotonic models with unknown permutations. 

\subsection*{Acknowledgements}
Most of this paper is based on results co-written with Emmanuel Pilliat, Christophe Giraud and Simone Maria Giancola - we are grateful for their insights and for many helpful brainstorming sessions back then on these results. We are in particular grateful to Christophe Giraud for many illuminating discussions.\\ 
The work of A. Carpentier is partially supported by the Deutsche Forschungsgemeinschaft (DFG)- Project-ID 318763901 - SFB1294 "Data Assimilation", Project A03,  by the DFG on the Forschungsgruppe FOR5381 "Mathematical Statistics in the Information Age - Statistical Efficiency and Computational Tractability", Project TP 02 (Project-ID 460867398), and by  the DFG on the French-German PRCI ANR-DFG ASCAI CA1488/4-1 "Aktive und Batch-Segmentierung, Clustering und Seriation: Grundlagen der KI" (Project-ID 490860858). The work of N. Verzelen has  partially been supported by ANR-21-CE23-0035 (ASCAI, ANR). The work of A.~Carpentier and N.~Verzelen is also supported by the Universite franco-allemande (UFA) through the college doctoral franco-allemand CDFA-02-25 "Statistisches Lernen für komplexe stochastische Prozesse".
\bibliographystyle{abbrv}
\bibliography{biblio}

\newpage

\appendix

\section{Proofs of the main results}\label{s:proofsimp}

\subsection{Proof of Lemma~\ref{lem:peeling}}

~\\

 \noindent
 {\bf Step 1: Choosing a relevant level set of $M$.} We write for any $u \in [0,1]$
$$M(u) = (\mathbf 1\{M_{i,j} \geq 2^{-u}\})_{i,j}.$$
Define  also the $\mathrm{Grid} = \{1, 2,\ldots, p\}$. Note that --see Figure~\ref{fig:sample_figure} for an illustration--
$$M \leq \sum_{u \in \mathrm{Grid}} M(u) 2^{-u+1} + 2^{-p},$$
and that, for any $u\in \mathrm{Grid}$, we have $M(u) 2^{-u}\leq M$. So that, taking $u^* \in \argmax_{u \in \mathrm{Grid}} \|M(u)\|_2^2 $, we get
\begin{equation}\label{eq:boundM1}
\|M\|_F^2 \leq \sum_{u \in \mathrm{Grid}} \|M(u)\|_2^2 2^{-2u+2} + nd2^{-2p} \leq p   \|M(u^*)\|_2^2 2^{-2u^*+2} + nd2^{-2p} ,
\end{equation}
and 
\begin{equation}\label{eq:boundM2}
M(u^*) 2^{-u^*}\leq M.
\end{equation}
See again Figure~\ref{fig:sample_figure} for an illustration.

\noindent
 {\bf Step 2: Approximating a two-valued isotonic matrix through a well-chosen element of $\cH$.} We now focus on $M(u^*)$. Note that $M(u^*)$ takes values in $\{0,1\}$ and is isotonic according to its first index. So there exists a permutation $\pi_{\mathrm{col}}$ of its second index that makes it bi-isotonic with non-increasing rows and columns. Define $\tilde M = M(u^*)_{.,  \pi_{\mathrm{col}}(.)}$ for the resulting bi-isotonic, two-valued matrix. See also Figure~\ref{fig:sample_figure} for an illustration. 

In what follows, we assume w.l.o.g.~that $d \geq n$. Write $\mathrm{Grid}' = \{1, 2, 4, \ldots, 2^{\lfloor \log_2(n)\rfloor}\}$. For any $i \in \mathrm{Grid}'$, we write $j_i$ for the largest $j \in [d]$ such that $\tilde{M}_{i,j}=1$. Define the matrices 
Write 
$$\tilde M(i) = (\mathbf 1\{i'\leq i, j'\leq j_i\})_{i',j'},~~~\mathrm{and}~~~\bar M(i) = (\mathbf 1\{i'\leq 2i, j'\leq j_i\})_{i',j'}\enspace .$$
See also Figure~\ref{fig:sample_figure2} for an illustration. Note that
\begin{equation}\label{eq:boundM3}
2\|\tilde M(i)\|_2^2 = \|\bar M(i)\|_2^2.
\end{equation}

Note that  -see Figure~\ref{fig:sample_figure2} for an illustration-, we have  $\tilde M \leq \sum_{i \in \mathrm{Grid}'} \bar M(i)$, and 
that,  for any $i\in \mathrm{Grid}'$, we have $\tilde M(i) \leq \tilde M$. So that, taking $i^* \in \argmax_{i \in \mathrm{Grid}'} \|\bar M(i)\|_2^2 $, we deduce from~\eqref{eq:boundM3} that 
$$\|\tilde M\|_2^2 \leq \sum_{i \in \mathrm{Grid}'} \|\bar M(i)\|_2^2 \leq \lfloor \log_2(n)\rfloor  \|\bar M(i^*)\|_2^2 \leq 2\lfloor \log_2(n)\rfloor  \|\tilde M(i^*)\|_2^2,$$
and  $\tilde M(i^*) \leq \tilde M$. 

 {\bf Step 3: Conclusion.} Putting the two last displayed equations together with Equations~\eqref{eq:boundM1} and~\eqref{eq:boundM2} and reminding that  $\tilde M = M(u^*)_{.,  \pi_{\mathrm{col}}(.)}$, we have
$$\|M\|_F^2 \leq  2p  \lfloor \log_2(n)\rfloor  \|\tilde M(i^*)\|_2^2 2^{-2u^*+2} + nd2^{-2p} ,$$
and 
$$\tilde M(i^*)_{., \pi_{\mathrm{col}}^{-1}(.)} 2^{-u^*}\leq M.$$
This concludes the proof with $\lambda = 2^{-u^*}$, $K_n = i^*$ and $K_d = j_{i^*}$.

\subsection{Proof of Lemma~\ref{lem:reduction::permutation} and of Proposition~\ref{lem:lower_bound_loss} and of Proposition~\ref{lem:lower_bound_loss2}}

\subsubsection{Proof of Lemma~\ref{lem:reduction::permutation}.}

 Consider any  polynomial $f$ in $\mathcal P_{\leq D}$. Define the permutation-invariant polynomial $f^\star$ by $f^{\star}(Y)= [n!d!]^{-1}\sum_{\pi_r\in \Pi_n, \pi_c\in \Pi_d} f(Y_{\pi_r,\pi_c})$. 
 Because of the permutation invariance of the distribution $\mathbb{P}_{M\sim \mu}$, we have  $\mathbb E_{M \sim \mu}  [f(Y)] = \mathbb E_{M \sim \mu} [f(Y_{ \pi_r, \pi_c})]$ for any permutation $(\pi_r,\pi_c)$ so that $\mathbb E_{M \sim \mu}  [f(Y)]= \mathbb E_{M \sim \mu}  [f^\star(Y)]$. Moreover, Jensen inequality together with the permutation invariance of $\mathbb{P}_{M_0}$ entail 
 \[
 \mathbb E_{M_0} [ f^\star(Y)^2]\leq [n!d!]^{-1}\sum_{\pi_r\in \Pi_n, \pi_c\in \Pi_d} \mathbb{E}_{M_0}\left[f^2(Y_{\pi_r,\pi_c})\right] = \mathbb E_{M_0} [ f(Y)^2]\enspace . 
 \]
We deduce from the two previous inequalities that $\mathrm{Adv}(f) \leq \mathrm{Adv}(f^\star)$. The advantage function is therefore maximized by a permutation-invariant polynomial. Since $(1,(\Psi_G)_{G\in\cG_{\leq D}})$ is an orthonormal basis of permutation invariant polynomials of degree at most $D$, the result follows. 

\subsubsection{Proof of Proposition~\ref{lem:lower_bound_loss}.}

We can restrict our attention to estimators $f$ such that $\E_{M_0}[f^2]>0$. Define the measure $\mu'$  over $\cH \cup \{M_0\}$ by $\mu'(\{M_0\})=1/2$ and $\mu'(\{M\})=1/(2|\cH|)$ for $M\in \cH$. We write $\mathbb E_{M\sim \mu'}[.]$ for the expectation when $M$ is sampled from $\mu'$. 
We have
\begin{align*}
\sup_{M\in \cH \cup \{M_0\}}\E_M[(f-x_0)^2] \geq  \mathbb E_{M\sim \mu'} [(f-x_0)^2] =  \frac{1}{2} + \frac{1}{2} \mathbb E_{M_0} f^2 + \frac{1}{2}  \mathbb E_{M\sim \mu} f^2 - \mathbb E_{M\sim \mu} f \enspace . 
\end{align*}
Write $z =\mathbb E^{1/2}_{M_0} f^2$ and define $g= f/z$. Note that  $\mathrm{Adv}(f) = \mathbb E_{M\sim \mu}(g)$. We get by Jensen's inequality,
\begin{align*}
1+  \mathbb E_{M_0} f^2+   \mathbb E_{M\sim \mu} f^2  - 2\mathbb E_{M\sim \mu} f&\geq  1+ z^2+z^2\mathrm{Adv}^2(f) - 2z\mathrm{Adv}(f) \geq  \frac{1}{1+\mathrm{Adv}^2(f)}\ , 
\end{align*}
where we optimized the polynomial with respect to $z$ in the last inequality. To prove the second inequality, we simply note that  
\[
\frac{2}{1+\mathrm{Adv}^2(f)}= 1 - \frac{\mathrm{Adv}^2(f)-1}{\mathrm{Adv}^2(f)+1 }= 1- (|\mathrm{Adv}(f)|-1)\frac{|\mathrm{Adv}(f)|+1}{\mathrm{Adv}^2(f)+1 }\geq 1- (|\mathrm{Adv}(f)|-1)\ , 
\]
where, in the last inequality, we consider separately the case $|\mathrm{Adv}(f)|\geq 1$ so that $|\mathrm{Adv}(f)|+1\leq |\mathrm{Adv}^2(f)|+1$ and the case 
$|\mathrm{Adv}(f)|\in [0,1)$ so that $|\mathrm{Adv}(f)|+1\geq |\mathrm{Adv}^2(f)|+1$.

\subsection{Proof of Proposition~\ref{lem:lower_bound_loss2}}
We have $\E[(f-x^{\star})^2] =   \mathbb E[x^{\star 2}]+  \mathbb E f^2 - 2\mathbb E [f x^{\star}],$
so that
$$\E[(f-x^{\star})^2]  \geq \mathbb E[x^{\star 2}] + \sqrt{\mathbb E f^2}\left[ \sqrt{\mathbb E f^2}  - 2\frac{\mathbb E[f x^{\star}] }{ \sqrt{\mathbb E f^2}}\right]=\mathbb E[x^{\star 2}] + \sqrt{\mathbb E f^2}\left[ \sqrt{\mathbb E f^2}  - 2\mathrm{Corr}^{\star}(f)\right]\enspace .$$
Since $a\left[ a  - 2b\right] \geq - b^2,$ for $a,b\in \mathbb R$, this concludes the proof.

\subsection{Proof of Lemma~\ref{lem:ortho1} and of Theorem~\ref{thm:worst_case_loss}}

\begin{proof}[Proof of Lemma~\ref{lem:ortho1}]
    Under $\mathbb{P}_{M_0}$, the canonical basis formed by the $(Y^{S})$'s is orthogonal. As a consequence, 
	for any $G^{(1)}$, $G^{(2)} \in \mathcal G_{\leq D}$, and any $\sigma^{(1)}_r\in \GS_{V^{(1)}},\sigma^{(2)}_r\in \GS_{V^{(2)}}, \sigma^{(1)}_c \in \GS_{W^{(1)}},\sigma^{(2)}_c\in \GS_{W^{(2)}}$
	$$\mathbb E_{M_0}[P_{G^{(1)},\sigma^{(1)}_r,\sigma^{(1)}_c} P_{G^{(2)},\sigma^{(2)}_r,\sigma^{(2)}_c}] = \mathbf 1\{P_{G^{(1)},\sigma^{(1)}_r,\sigma^{(1)}_c} = P_{G^{(2)},\sigma^{(2)}_r,\sigma^{(2)}_c}\}.$$
	So that
	\begin{align*}
	\mathbb E_{M_0}[\Psi_{G^{(1)}} \Psi_{G^{(2)}}] &= \frac{1}{\sqrt{\mathbb V(G^{(1)})\mathbb V(G^{(2)})}} \mathbb E_{M_0}\left[\sum_{\substack{\sigma^{(1)}_r\in \GS_{V^{(1)}}, \\ \sigma^{(1)}_c\in \GS_{W^{(1)}}}} \sum_{\substack{\sigma^{(2)}_r\in \GS_{V^{(2)}}, \\ \sigma^{(2)}_c\in \GS_{W^{(2)}}}} P_{G^{(1)},\sigma^{(1)}_r,\sigma^{(1)}_c} P_{G^{(2)},\sigma^{(2)}_r,\sigma^{(2)}_c}\right]\\ 
	&= \frac{\mathbf 1\{G^{(1)}=G^{(2)}\}}{\mathbb V(G^{(1)})}\left|\left\{\sigma^{(1)}_r\in \GS_{V^{(1)}},\sigma^{(2)}_r\in \GS_{V^{(2)}}, \sigma^{(1)}_c\in \GS_{W^{(1)}},\sigma^{(2)}_c\in \GS_{W^{(2)}}: P_{G^{(1)},\sigma^{(1)}_r,\sigma^{(1)}_c} = P_{G^{(2)},\sigma^{(2)}_r,\sigma^{(2)}_c}\right\}\right|.
	\end{align*}
This concludes the proof by definition of $\mathbb V(G)$ in \eqref{eqn:variance of graph} - since
\begin{align*}
&\left|\left\{\sigma^{(1)}_r\in \GS_{V^{(1)}},\sigma^{(2)}_r\in \GS_{V^{(2)}}, \sigma^{(1)}_c\in \GS_{W^{(1)}},\sigma^{(2)}_c\in \GS_{W^{(2)}}: P_{G^{(1)},\sigma^{(1)}_r,\sigma^{(1)}_c}= P_{G^{(2)},\sigma^{(2)}_r,\sigma^{(2)}_c}\right\}\right|\\ 
&= \left|\left\{\sigma^{(1)}_r\in \GS_{V^{(1)}}, \sigma^{(1)}_c\in \GS_{W^{(1)}}\right\}\right||\mathrm{Aut}(G^{(1)})| = \mathbb V(G^{(1)}).
\end{align*}
\end{proof}

\begin{proof}[Proof of Theorem~\ref{thm:worst_case_loss}] 
In light of~\eqref{eq:badv}, we mainly have to bound  $\mathbb E^2_{M \sim \mu}(\Psi_{G})$ for each template $G\in \mathcal{G}_{\leq D}$. 

\medskip

\noindent	
\underline{Step 1: Bound on $\mathbb E_{M \sim \mu}(\Psi_{G})$.} For any $\sigma_r \in \GS_V,\sigma\in \GS_W$, whenever $|V| \leq K_n$ and $|W| \leq K_d$, we have 
$$\mathbb E_{M \sim \mu}[P_{G,\sigma_{r},\sigma_c}] = \lambda^{|E|} \frac{K_n!(n-|V|)! }{n!(K_n - |V|)!}\frac{K_d!(d-|W|)!)}{d!(K_d - |W|)!}\enspace ,$$
and otherwise we have $\mathbb E_{M \sim \mu}[P_{G,\sigma_{r},\sigma_c}] = 0$. So that, whenever $|V| \leq K_n$ and $|W| \leq K_d$,
\begin{equation}~\label{eq:signal}
\mathbb E_{M \sim \mu}(\Psi_{G}) = \sqrt{\frac{(n-|V|)!(d-|W|)!}{|\mathrm{Aut}(G)|n!d!}} \lambda^{|E|} \frac{K_n!}{(K_n - |V|)!}\frac{K_d!}{(K_d - |W|)!} \enspace . 
\end{equation}
Since $|V|\leq K_n\leq n$, $|W|\leq K_d\leq d$, and $|\mathrm{Aut}(G)|\geq 1$, this leads us to
\begin{align*}
0\leq \mathbb E_{M \sim \mu}(\Psi_{G})
&\leq \lambda^{|E|}\left(\frac{K_n}{{\sqrt{n}}}\right)^{|V|} \left(\frac{K_d}{{\sqrt{d}}}\right)^{|W|}\enspace .
\end{align*}
If $|V|> K_n$ or $|W|> K_d$, we have $\mathbb E_{M \sim \mu}(\Psi_{G}) =0$.

\medskip 

\noindent
\underline{Step 2: Bound on $\mathrm{Adv}^2(f)$.} The last equation of the previous step implies together with Equation~\eqref{eq:badv} that
\[
\mathrm{Adv}^2(f) \leq 1+ \sum_{G \in \mathcal{G}_{\leq D} :\, |V| \leq K_n,\, |W| \leq K_d}
\lambda^{2|E|} \left( \frac{K_n}{\sqrt{n}} \right)^{2|V|} \left( \frac{K_d}{\sqrt{d}} \right)^{2|W|}.
\]
Write $G = (G_1,\ldots, G_{\mathrm{cc}})$ for the decomposition of $G$ into its $\mathrm{cc}_G$ connected components. Write $V_i,W_i,E_i$ for the set of nodes and of edges of connected component $G_i$. This leads us to
\[
\mathrm{Adv}^2(f) \leq 1+ \sum_{1\leq c \leq D} \sum_{G \in \mathcal{G}_{\leq D} :\, \mathrm{cc}_{G} = c,\, |V| \leq K_n,\, |W| \leq K_d} \prod_{i=1}^c \lambda^{2|E_i|} \left( \frac{K_n}{\sqrt{n}} \right)^{2|V_i|} \left( \frac{K_d}{\sqrt{d}} \right)^{2|W_i|}\enspace . 
\]
Hence, we mainly have to enumerate the collection of templates with a given value for $\mathrm{cc}_{G}$, $|E|$, $|V|$, and $|W|$. 
The following lemma recalls standard properties of bipartite graphs without isolated edges. 
\begin{lemma}\label{lem:countedges}
Let $G = (V,W,E)\in \mathcal G_{\leq D}$ be a template with $\mathrm{cc}_{G}$. Then, we have
\[
|E|+\mathrm{cc}_{G} \geq |V| + |W|\ , \quad |V|\land |W| \geq \mathrm{cc}_{G}\  ,\quad \text{ and }\,\quad |E| \geq |V|\vee |W|
\]
\end{lemma}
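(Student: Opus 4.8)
The plan is to prove each of the three inequalities by a component-wise argument that uses only the defining property of a template, namely that it contains no isolated node. Decompose $G$ into its connected components $G_1,\ldots,G_{\mathrm{cc}_G}$, and for each $i$ write $V_i\subset V$, $W_i\subset W$, $E_i\subset E$ for the corresponding vertex and edge sets, so that $|V|=\sum_i |V_i|$, $|W|=\sum_i |W_i|$, and $|E|=\sum_i |E_i|$. The key preliminary observation is that, since $G$ has no isolated node, every component $G_i$ carries at least one edge; being connected and bipartite, it therefore meets both colour classes, i.e.\ $|V_i|\geq 1$ and $|W_i|\geq 1$ for every $i$. I would state this explicitly at the outset, because it is exactly what rules out a degenerate single-vertex ``component''.

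For the first inequality I would invoke the elementary fact that a connected graph on $N$ vertices has at least $N-1$ edges (it contains a spanning tree). Applied to $G_i$, which has $|V_i|+|W_i|$ vertices, this gives $|E_i|\geq |V_i|+|W_i|-1$. Summing over $i\in\{1,\ldots,\mathrm{cc}_G\}$ yields $|E|\geq |V|+|W|-\mathrm{cc}_G$, which rearranges to $|E|+\mathrm{cc}_G\geq |V|+|W|$.

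For the second inequality, summing the bound $|V_i|\geq 1$ over the $\mathrm{cc}_G$ components gives $|V|\geq \mathrm{cc}_G$, and symmetrically $|W|\geq \mathrm{cc}_G$, hence $|V|\wedge|W|\geq \mathrm{cc}_G$. For the third inequality I would count edges by their endpoints on the $V$-side: in a bipartite graph $\sum_{v\in V}\deg_G(v)=|E|$, and since no vertex is isolated each summand is at least $1$, so $|E|\geq |V|$; counting instead from the $W$-side gives $|E|\geq |W|$, whence $|E|\geq |V|\vee|W|$.

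None of these steps presents a genuine obstacle, so there is no ``hard part'' here in the usual sense; the lemma is purely a bookkeeping tool for the enumeration of templates in the proof of Theorem~\ref{thm:worst_case_loss}. The only point that requires a little care is precisely the use of the no-isolated-node convention, which I would flag clearly, since without it the second and third bounds would fail for a lone vertex, and the spanning-tree inequality in the first bound would still be correct but then the wrong quantity would be compared.
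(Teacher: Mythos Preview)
Your proof is correct and follows essentially the same approach as the paper: a spanning-forest count for the first inequality, the observation that each component meets both colour classes for the second, and the no-isolated-node condition for the third. The paper's argument is more terse but identical in substance.
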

\begin{proof}
The first inequality is standard and is easily proved by counting the edges of a covering forest of $G$. To show the second inequality, we only need to observe that a connected component of $G$ contains a least a node in $V$ and an node in $W$. The last inequality is consequence of the absence of isolated nodes. 
\end{proof}

Given $(r,s,e)$, we define  $\mathcal G(r,s,e)\subset \cG_{\leq D}$ as the set of templates $G$ that are  connected and such that $|V|=r$, $|W|=s$ and $|E|=e$. Given $c\in \mathbb{N}$, we define the sequence
$$\mathcal S(r,s,e,c) := \left\{(r_i,s_i,e_i)_{1\leq i \leq c}\in (\mathbb N^{*})^{c}: \forall i, \sum r_i = r, \sum s_i = s, \sum_i e_i = e, e_i+1 \geq r_i+s_i\right\}\enspace .$$
We have that
$$\left|\{G \in \mathcal G_{\leq D}: \mathrm{cc} = c, |V| = r, |W| = s, |E| = e\}\right| \leq  \sum_{\left((r_i)_i, (s_i)_i, (e_i)_i\right)\in \mathcal S(r,s,e,c)} \prod_{i\leq c}\left|\mathcal G(r_i,s_i,e_i)\right|\enspace .$$
Write also
$$\mathcal S_D(c) = \left\{r,s,e\in \mathbb N^{*}:e+c \geq r+s, c\leq r\leq K_n, c\leq s\leq K_d, c\lor r\lor s\leq e\leq D\right\}\enspace .$$
We have 
$$\left|\{G \in \mathcal G_{\leq D}: \mathrm{cc} = c, |V|\leq K_n, |W|\leq K_d\}\right|\leq \sum_{r,s,e \in \mathcal S_D(c)}\quad \sum_{\left((r_i)_i, (s_i)_i, (e_i)_i\right)\in \mathcal S(r,s,e,c)} \quad  \prod_{i\leq c}\left|\mathcal G(r_i,s_i,e_i)\right|\enspace .$$
This leads us to the following bound for the advantage. 
\begin{align}\label{eq:advinter}
	\mathrm{Adv}^2(f) &\leq 1+ \sum_{c \leq D} \sum_{(r,s,e)\in \mathcal S_D(c)} \sum_{\substack{(r_i)_{i \leq c},\, (s_i)_{i \leq c},\, (e_i)_{i \leq c} \\ \in \mathcal S(r,s,e,c)}} \prod_{i=1}^c \left[ \left|\mathcal G(r_i,s_i,e_i)\right|  \lambda^{2e_i}  \left( \frac{K_n}{\sqrt{n}} \right)^{2r_i} \left( \frac{K_d}{\sqrt{d}} \right)^{2s_i} \right] 
\end{align}
Define the function $\Psi(.,.,.)$ by 
\[
\Psi(r, s, e) := \left|\mathcal G(r,s,e)\right|  \lambda^{2e}  \left( \frac{K_n}{\sqrt{n}} \right)^{2r} \left( \frac{K_d}{\sqrt{d}} \right)^{2s}\enspace . 
\]

\noindent
\underline{Step 3: Upper bound on $\Psi(r, s, e)$}. First, the following lemma bounds the number of non-isomorphic connected bipartite graphs with a given number of nodes and edges.
\begin{lemma}\label{lem:upper:bound:N}
For any $r,s,e\in \mathbb N^{*}$ such that $e+1 \geq r+s$, we have
$$\left|\mathcal G(r,s,e)\right| \leq	\binom{r+s-2}{r-1}\binom{r+s-2}{s-1} \min\left(\binom{e-s}{r-1}s^{e - (r+s-1)}, \binom{e-r}{s-1}r^{e - (r+s-1)}\right)\leq 2^{4e} (r\land s)^{e - (r+s-1)}.$$
\end{lemma}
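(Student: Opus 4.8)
The plan is to count isomorphism classes in $\mathcal G(r,s,e)$ by the classical ``spanning tree plus excess edges'' decomposition. Set $k := e-(r+s-1)\ge 0$, which is well defined and nonnegative precisely because $e+1\ge r+s$ and a connected graph on $r+s$ vertices has at least $r+s-1$ edges. Given an isomorphism class $G\in\mathcal G(r,s,e)$, I would pick any spanning tree $T$ of $G$, root $T$ at one of its left vertices, and fix a planar embedding; the resulting rooted plane tree carries a canonical left-to-right breadth-first labelling, which induces labels $1,\dots,r$ on the left class and $1,\dots,s$ on the right class of $G$. Recording (i) the isomorphism type of this rooted, plane, properly $2$-coloured tree and (ii) the set of the $k$ non-tree edges of $G$ expressed in these induced labels reconstructs a labelled representative of $G$, hence $G$ itself. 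Since every class admits at least one such record, $|\mathcal G(r,s,e)|$ is at most the number of possible records, i.e.
\[
|\mathcal G(r,s,e)| \;\le\; N_{\mathrm{tree}}(r,s)\cdot\binom{rs-(r+s-1)}{k},
\]
where $N_{\mathrm{tree}}(r,s)$ is the number of rooted plane trees with $r$ vertices at even depth (root included) and $s$ at odd depth.

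For the excess-edge factor I would use only elementary estimates. Since $\binom{rs-(r+s-1)}{k}\le\binom{rs}{k}\le (rs)^k/k!$ and, for every $m\ge 1$, $\binom{m+k-1}{k}=\tfrac{(m+k-1)\cdots m}{k!}\ge m^k/k!$ (a product of $k$ integers all $\ge m$), we get, using $e-s=r-1+k$,
\[
\binom{rs}{k}\;\le\;\frac{r^k}{k!}\,s^k\;\le\;\binom{r+k-1}{k}s^k\;=\;\binom{e-s}{r-1}s^k ,
\]
and by the symmetric computation this is also at most $\binom{e-r}{s-1}r^k$, so the excess-edge factor is at most $\min\!\big(\binom{e-s}{r-1}s^k,\ \binom{e-r}{s-1}r^k\big)$. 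For the tree factor, the number of rooted plane $2$-coloured trees with parts $(r,s)$ is a Narayana-type quantity, equal to $\tfrac1r\binom{r+s-2}{r-1}\binom{r+s-1}{r-1}$, and $\tfrac1r\binom{r+s-1}{r-1}=\tfrac{r+s-1}{rs}\binom{r+s-2}{r-1}\le\binom{r+s-2}{r-1}$ because $(r-1)(s-1)\ge 0$ forces $r+s-1\le rs$; hence $N_{\mathrm{tree}}(r,s)\le\binom{r+s-2}{r-1}^2=\binom{r+s-2}{r-1}\binom{r+s-2}{s-1}$. I would obtain the Narayana count via the cycle lemma (or a standard lattice-path bijection for plane trees). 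Combining these three bounds yields the stated intermediate estimate.

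The weak bound $2^{4e}(r\wedge s)^{e-(r+s-1)}$ then follows by crude rounding: assuming without loss of generality $s\le r$ and keeping the first term of the minimum, $\binom{r+s-2}{r-1}\binom{r+s-2}{s-1}\le 4^{r+s-2}\le 2^{2e}$ (since $r+s-1\le e$) and $\binom{e-s}{r-1}\le 2^{e-s}\le 2^{e}$, so $|\mathcal G(r,s,e)|\le 2^{3e}s^k\le 2^{4e}(r\wedge s)^k$, with room to spare. Note that for the weak bound alone one does not even need the sharp tree enumeration: the off-the-shelf estimate $N_{\mathrm{tree}}(r,s)\le C_{r+s-1}\le 4^{r+s-1}\le 4^{e}$ already suffices.

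The routine parts are the record/injection bookkeeping and the binomial manipulations. \textbf{The main obstacle} is the tree factor: matching the stated product $\binom{r+s-2}{r-1}\binom{r+s-2}{s-1}$ \emph{exactly} requires the precise enumeration of two-coloured plane trees rather than a crude exponential bound, so that is where the real combinatorial content sits; everything downstream (and the weaker, and for us sufficient, inequality $|\mathcal G(r,s,e)|\le 2^{4e}(r\wedge s)^{e-(r+s-1)}$) goes through with only elementary estimates.
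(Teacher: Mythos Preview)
Your argument is correct and follows the same high-level ``spanning tree plus excess edges'' decomposition as the paper. The treatment of the $k=e-(r+s-1)$ excess edges is also essentially equivalent: the paper parametrises them directly by an extra-degree sequence on one side times $s^{k}$ (resp.\ $r^{k}$) endpoint choices, obtaining $\binom{e-s}{r-1}s^{k}$ in one line, whereas you first count exactly via $\binom{rs-(r+s-1)}{k}$ and then bound this by the same expression through $\binom{m+k-1}{k}\ge m^{k}/k!$.

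The one genuine difference is the spanning-tree count. You invoke the Narayana-type enumeration of rooted plane two-coloured trees (via the cycle lemma) and then bound it by $\binom{r+s-2}{r-1}^{2}$. The paper instead argues elementarily: given any unlabelled bipartite tree on $(r,s)$ vertices, a BFS exploration from an arbitrary left root produces an ordered pair of degree sequences $(a_1,\ldots,a_r)$ and $(b_1,\ldots,b_s)$, each a composition of $r+s-1$ into positive parts; since this pair reconstructs the tree, the number of isomorphism classes is at most $\binom{r+s-2}{r-1}\binom{r+s-2}{s-1}$ directly. So what you flagged as the ``main obstacle'' is in fact bypassed by the paper with a self-contained encoding that needs no external enumeration result. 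Your route is equally valid (and your exact intermediate count $\binom{rs-(r+s-1)}{k}$ for the non-tree edges is cleaner), but the degree-sequence argument is the more economical path to the stated product and is worth knowing.
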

From this lemma, we deduce that, for any $r,s,e\in \mathbb N^{*}$ such that $r\leq K_n\land D, s\leq K_d\land D$ and $e\geq r+s-1$, we have 
\begin{align*}
\Psi(r,s,e) &\leq {(r\land s)}^{e - (r+s-1)}\, 2^{4e}\lambda^{2e}{\left(\frac{K_n}{\sqrt{n}}\right)}^{2r} {\left(\frac{K_d}{\sqrt{d}}\right)}^{2s}\\
&\leq 2^4\,{\left[2^4(D\land K_n\land K_d) \lambda^2\right]}^{e - (r+s-1)} {\left(2^4\lambda^2 \frac{K_d^2}{d}\right)}^{s-1} {\left(2^4\lambda^2 \frac{K_n^2}{n}\right)}^{r-1}\lambda^2 {\left(\frac{K_nK_d}{\sqrt{nd}}\right)}^2\enspace .
\end{align*}
Since $e\geq r+s-1$, all exponents in the above bounds are non-negative. Using our assumption~\eqref{eq:assump}, we get 
$$\Psi(r,s,e) \leq 2^{-2\bar c e}\enspace .$$

\noindent
\underline{Step 4: Conclusion.} We now plug this upper bound on $\Psi(r,s,e)$ into Equation~\eqref{eq:advinter}. We have that
\begin{align*}
	\mathrm{Adv}^2(f) -1 &\leq  \sum_{1\leq c \leq D}\quad  \sum_{(r,s,e)\in \mathcal S_D(c)}\quad \sum_{\substack{(r_i)_{i \leq c},\, (s_i)_{i \leq c},\, (e_i)_{i \leq c} \\ \in \mathcal S(r,s,e,c)}} \prod_{i=1}^c 2^{-2\bar c e_i}\\
	&=  \sum_{1\leq c \leq D}\quad \sum_{(r,s,e)\in \mathcal S_D(c)} \quad \sum_{\substack{(r_i)_{i \leq c},\, (s_i)_{i \leq c},\, (e_i)_{i \leq c} \\ \in \mathcal S(r,s,e,c)}} 2^{-2\bar c e}.
	\end{align*}
The following lemma upper bounds the cardinality of $\mathcal S(r,s,e,c)$.
\begin{lemma}\label{lem:boundS}
	We have that
	$$| \mathcal S(r,s,e,c)| \leq \binom{r-1}{c-1}\binom{s-1}{c-1}\binom{e-1}{c-1}\leq 2^{r+s+e}.$$
\end{lemma}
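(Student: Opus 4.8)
The plan is to bound $|\mathcal S(r,s,e,c)|$ by simply forgetting the per-component constraint $e_i+1 \geq r_i+s_i$ and counting the (larger) set that remains. An element of $\mathcal S(r,s,e,c)$ is a triple of sequences $\big((r_i)_{i\leq c},(s_i)_{i\leq c},(e_i)_{i\leq c}\big)$ of positive integers subject to the three sum constraints $\sum_i r_i=r$, $\sum_i s_i=s$, $\sum_i e_i=e$ together with the inequalities $e_i+1\geq r_i+s_i$. Discarding those inequalities can only enlarge the set, and once they are gone the three sequences are completely independent. Hence $|\mathcal S(r,s,e,c)|$ is at most the product of the number of compositions of $r$, of $s$, and of $e$, each into exactly $c$ positive parts.

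The number of ways to write a positive integer $n$ as an ordered sum of $c$ positive integers is the classical stars-and-bars count $\binom{n-1}{c-1}$: place $n$ unit tokens in a row and choose $c-1$ of the $n-1$ internal gaps to cut. Applying this with $n=r$, $n=s$, $n=e$ respectively gives
$$|\mathcal S(r,s,e,c)| \leq \binom{r-1}{c-1}\binom{s-1}{c-1}\binom{e-1}{c-1}\enspace .$$
For the second inequality, bound each binomial coefficient by the total mass of its binomial row, $\binom{n-1}{c-1}\leq \sum_{j=0}^{n-1}\binom{n-1}{j}=2^{n-1}$, so that the product is at most $2^{r-1}2^{s-1}2^{e-1}=2^{r+s+e-3}\leq 2^{r+s+e}$, as claimed.

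There is no genuine obstacle in this lemma; the only point worth stating carefully is that relaxing the constraint $e_i+1\geq r_i+s_i$ strictly enlarges the index set, so that the composition count is an upper bound rather than an identity. If desired one could keep track of the discarded constraint to obtain a sharper (but unnecessary) bound.
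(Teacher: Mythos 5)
Your proof is correct and follows exactly the same route as the paper: relax the per-component inequality so the three sequences factorize into independent compositions, count each with stars-and-bars as $\binom{n-1}{c-1}$, and finish with the crude bound $\binom{n-1}{c-1}\leq 2^{n-1}$. The only cosmetic difference is that you state the relaxation step explicitly, which the paper leaves implicit.
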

Gathering this lemma and the previous bound, we arrive at 
		\begin{align*}
		\mathrm{Adv}^2(f) -1 
			&\leq   \sum_{1\leq c \leq D} \sum_{(r,s,e)\in \mathcal S_D(c)} 2^{e+r+s} 2^{-2\bar c e}\\
			& \leq   \sum_{1\leq c \leq D} \sum_{(r,s,e)\in \mathcal S_D(c)} 2^{3e}  2^{-2\bar c e}\\
			&\leq    \sum_{1\leq e\leq D} e^3 2^{3e}  2^{-2\bar c e}  \leq 2^{-\bar c }\enspace , 
	\end{align*}
	where we use in the second line that, for $(r,s,e)\in \mathcal S_D(c)$, we have that $e\geq (r\vee s)$ and that $\bar c\geq 10$.
This concludes the bound of $\mathrm{Adv}^2(f)$. Finally, we lower bound the quadratic risk with  Lemma~\ref{lem:lower_bound_loss}. This concludes the proof of the theorem.


\begin{proof}[Proof of Lemma~\ref{lem:upper:bound:N}]
Consider any unlabelled bi-partite graph with $(r,s)$ nodes and $e$ edges. To construct it, we can first build an unlabelled tree that is included on this graph and  then add the $e'=e-(r+s-1)$ remaining edges.

\noindent
\underline{Step 1: choosing a spanning tree.} Let us count the number of non-isomorphic bipartite trees with respectively $r$ and $s$ nodes. For $i=1,\ldots, r$ and $j=1,\ldots, s$, we write $d_i$ and $d'_j$ for the degree of node $i$ and node $j$. 
Given positive sequences $(a_1,\ldots, a_r)$ and $(b_1,\ldots, b_s)$ such that $\sum_{i=1}^r a_i=r+s-1$ and $\sum_{j=1}^sb_j=r+s-1$, we can build a bi-partite graph on  using the following process. Fist, connect the node $v_1$ to $w_1,\ldots, w_{a_1}$. Next,  connect $v_2$ to $(a_2-1)$ nodes $w_{a_1+1},\ldots, w_{a_1+a_2-1}$. Then, we  iteratively connect $v_i$ to $(a_i-1)$ nodes $w_{a_1+ \sum_{l=2}^{i-1} (a_i-1)+1},\ldots, w_{a_1+ \sum_{l=2}^{i-1} (a_i-1)+1}$. This way, we have added $s$ edges to graph. 
Then, we repeat the same process by reversing the role of $w_i$ and $v_i$, and counting only once the edge between $v_1$ and $w_1$. In this way, we have built a bipartite graph with degree sequences $(a_1,\ldots, a_r)$ and $(b_1,\ldots, b_s)$ and with $r+s-1$ edges. Conversely, let us show that any unlabelled bipartite tree can be constructed with this process. Given such a tree, we fix arbitrarily a root $v_1$ and denote $d_1$ its degree. 
We explore its $d_1$ neighbors that we label them respectively $w_{1},w_{2}, \ldots,w_{d_1}$ and we denote $d'_1,\ldots, d'_{d_1}$ their degrees.  Then, we explore the $d'_1-1$ new neighbors of $w_1$ and we label them $v_2,\ldots, v_{d'_1-1}$. It we iterate this process, we arrive at degree sequences $(d_1,\ldots, d_r)$ and $(d'_1,\ldots, d'_s)$ that, in turn, allow to recover the tree by the previous construction.

The number of positive integer sequences $(a_1,\ldots, a_r)$ that sum to $r+s-1$ is $\binom{r + s - 2}{r - 1}$. As a consequence, the number is non-isomorphic bipartite trees with respectively $r$ and $s$ nodes is at most
\[
\binom{r + s - 2}{r - 1} \binom{r + s - 2}{s - 1}\enspace .
\]

\noindent
\underline{Step 2: adding additional edges.} 
It remains to count the number of ways to add the $e'= e-(r+s-1)$ additional edges. For that purpose, we now arbitrarily label the nodes of the trees. 
One way of enumerate the ways of adding edges is to build sequences $(c_1,\ldots, c_r)$  of non-negative integers that sum to $e'$. Such sequences correspond to the number $c_i$ of additional edges that are connected to the node $v_i$. Then, there are $s^{e'}$ ways of connecting these $e'$ edges to one of the $[s]$ nodes. Since the number of such sequences $(c_1,\ldots, c_r)$ is equal to  $\binom{r-1}{e'+r-1}$, the number of ways to add these $e'$ edges is at most
\[
\binom{e' + r - 1}{r - 1} s^{e'}\enspace . 
\]
By symmetry, we can reverse the role of $r$ and $s$ to get the tighter bound
\[
\binom{e' + r - 1}{r - 1} s^{e'}\bigwedge \binom{e' + s - 1}{s - 1} r^{e'} \enspace . 
\]

\medskip
Recalling that $e'=e-(r+s-1)$, we combine the two steps. This leads to the first upper bound. The second upper bounds follows from the inequalities $\binom{a}{b} \leq 2^a$ and 
$e \geq r+s-1$.

\end{proof}
\begin{proof}[Proof of Lemma~\ref{lem:boundS}]
	We have
	$$| \mathcal S(r,s,e,c)| \leq \left|\left\{(r_i)_{i \leq c} \in \mathbb N^{*c}: \sum r_i = r, \right\}\right|\times \left|\left\{(s_i)_{i \leq c} \in \mathbb N^{*c}: \sum s_i = s, \right\}\right|\times \left|\left\{(e_i)_{i \leq c} \in \mathbb N^{*c}: \sum e_i = e, \right\}\right|\enspace .$$
	It is known that the number   $\left|\left\{(r_i)_{i \leq c} \in \mathbb N^{*c}: \sum r_i = r, \right\}\right|$ of such sequences is equal to $\binom{r-1}{c-1}$. The same reasoning applies to $s$ and $e$, which brings the results using also that $\binom{a}{b} \leq 2^a$.
\end{proof}
\end{proof}

\subsection{Proof of Lemma~\ref{lem:reduction::permutation2}}

	This proof is similar to the proof of Lemma~\ref{lem:reduction::permutation}, the only difference being that we rely on the permutation invariance of the distribution of $(Y,x^{\star})$.


\subsection{Proofs of Theorems~\ref{thm:isorefo2} and~\ref{prop:compest}}

The proofs follow the sames lines as in~\cite{CGGV25}, the main difference being that we consider here non-symmetric matrices.

\subsubsection{Additional graph notations}\label{sec:additional:graph:notations}
In order to control the covariance terms of the form $\mathbb{E}[\Psi^{\star}_{G^{(1)}}\Psi^{\star}_{G^{(2)}}]$, we need to introduce additional notation for graphs and labelings.

\medskip 

\noindent 
\underline{\bf Matching of nodes.} Consider two templates $G^{(1)}=(V^{(1)},W^{(1)},E^{(1)})$, $G^{(2)}=(V^{(2)},W^{(2)},E^{(2)})$. 
Given labelings $\sigma^{(1)}=(\sigma^{(1)}_r,\sigma^{(1)}_c)$ and $\sigma^{(2)}=(\sigma^{(2)}_r,\sigma^{(2)}_c)$, we say  that two nodes $v^{(1)}$ and $v^{(2)}$ (resp. $w^{(1)}$ and $w^{(2)}$) are matched if $\sigma_r^{(1)}(v^{(1)})=  \sigma_r^{(2)}(v^{(2)})$ (resp. $\sigma_c^{(1)}(w^{(1)})=  \sigma_c^{(2)}(w^{(2)})$). 
More generally,  a matching $\mathbf M_r$ stands for a set of pairs of nodes $(v^{(1)},v^{(2)})\in V^{(1)}\times V^{(2)}$ where no node in $V^{(1)}$ or $V^{(2)}$ appears twice. Similarly, we define matchings $\mathbf M_c$ for $W^{(1)}$ and $W^{(2)}$.  We denote $\mathcal M_r$ and  $\mathcal{M}_c$ for the collection of all possible node matchings, where, for $\mathcal M_r$, we have the additional constraint that $(v_1^{(1)},v_1^{(2)})$ belongs to the matching. 
Given $\mathbf M_r \in \mathcal M_r$ and $\mathbf M_c \in \mathcal{M}_c$, we define the set of labeling compatible with the matchings $\mathbf M_r$ and $\mathbf M_c$ by 
\begin{align*}
&\GS(\mathbf M_r)\\ 
&= \left\{(\sigma^{(1)}_r,\sigma^{(2)}_r)\in \GS^{\star}_{V^{(1)}}\times \GS^{\star}_{V^{(2)}}: \forall (v^{(1)},v^{(2)}) \in V^{(1)}\times V^{(2)}, \{\sigma^{(1)}_r(v^{(1)}) = \sigma^{(2)}_r(v^{(2)})\}\Longleftrightarrow \{(v^{(1)},v^{(2)}) \in \mathbf M_r\}\right\}\enspace ;  \\
&\GS(\mathbf M_c)\\
&= \left\{(\sigma^{(1)}_c,\sigma^{(2)}_c)\in \GS_{W^{(1)}}\times\GS_{W^{(2)}}: \forall (w^{(1)},w^{(2)}) \in W^{(1)}\times W^{(2)}, \{\sigma^{(1)}_c(w^{(1)}) = \sigma^{(2)}_c(w^{(2)})\}\Longleftrightarrow \{(w^{(1)},w^{(2)}) \in \mathbf M_c\}\right\}\enspace .
\end{align*}
For short, we write $\mathbf M = (\mathbf{M}_r,\mathbf M_c)$ and $\GS(\mathbf M)$ for the collections of labelings $(\sigma^{(1)}_r,\sigma^{(1)}_c,\sigma^{(2)}_r,\sigma^{(2)}_c)$ compatible with $\mathbf M$.
Given a matching $\mathbf M$, we write that two edges $e\in E^{(1)}$ and $e'\in E^{(2)}$ are matched if the corresponding
incident nodes are matched. Importantly, as $\mathbb{P}$ is permutation invariant, $\mathbb{E}[P_{G^{(1)},\sigma_r^{(1)},\sigma_c^{(1)}}P_{G^{(2)},\sigma_r^{(2)},\sigma_c^{(2)}}]$ is the same for all $(\sigma^{(1)},\sigma^{(2)})$ in $\GS(\mathbf{M})$.

\medskip

\noindent
\paragraph{Merged graph $G_\cup$, intersection graph $G_{\cap}$, and symmetric difference graph $G_{\Delta}$.} Given a template $G$ and labelings $\sigma_r$ and $\sigma_c$, we write $G[\sigma_r,\sigma_c]$ for the corresponding labelled graph. 
For such labelled bi-partite graphs $G^{(1)}[\sigma^{(1)}_r,\sigma^{(1)}_c]$ and $G^{(2)}[\sigma^{(2)}_r,\sigma^{(2)}_c]$, we define $G_{\cup}$ for the merged graph, with the convention that the two same edges are merged into a single edge. Similarly,  we define the intersection graph 
$G_{\cap}=(V_{\cap},W_{\cap},E_{\cap})$ and the symmetric difference graph $G_{\Delta}=(V_{\Delta},W_{\Delta},E_{\Delta})$ so that $E_{\Delta}=E_{\cup}\setminus E_{\cap}$. Here, $V_{\cap}$ and $W_{\cap}$ (resp. $V_{\Delta}$ and $W_{\Delta}$) are the sets of nodes induced by the edges $E_{\cap}$ (resp. $E_{\Delta}$)
so that $G_{\cap}$ (resp. $G_{\Delta}$) does not contain any isolated node. We also have $|E_{\cup}| = | E^{(1)}| +|E^{(2)}| - |E_\cap|$ and $|V_{\cup}| = |V^{(1)}|+|V^{(2)}|-|\mathbf{M}_r|$ for $(\sigma_r^{(1)},\sigma_r^{(2)})\in \GS(\mathbf M_r)$. Note that, for a fixed matching $\mathbf{M}$, all graphs $G_{\cup}$ (resp. $G_{\cap}$, $G_\Delta$) are isomorphic for $(\sigma_r^{(1)},\sigma_r^{(2)})\in \GS(\mathbf M_r)$ and $(\sigma_c^{(1)},\sigma_c^{(2)})\in \GS(\mathbf M_c)$. Hence, we shall refer to quantities such as $|E_{\Delta}|$, $|V_{\Delta}|$, $|W_{\Delta}|$\ldots associated to a matching $\mathbf{M}$.  
Finally, we write $\mathrm{cc}_{\Delta}$ for the number of connected components in $G_{\Delta}$. 

\medskip

\noindent
\paragraph{Sets of unmatched nodes and of semi-matched nodes.} 
 Write $U^{(1)}=(U^{(1)}_r,U^{(1)}_c)$, resp.~$U^{(2)}=(U^{(2)}_r,U^{(2)}_c)$ for the set of nodes in $G^{(1)}[\sigma^{(1)}_r,\sigma^{(1)}_c]$, resp.~$G^{(2)}[\sigma^{(2)}_r,\sigma^{(2)}_c]$ that are not matched, namely the {\bf unmatched nodes}, that is 
\begin{align*}
U^{(1)}_r =\sigma_r^{(1)}(V^{(1)})\setminus \sigma_r^{(2)}(V^{(2)})\ ; \quad \quad\quad 
U^{(2)}_r =\sigma_r^{(2)}(V^{(2)})\setminus \sigma_r^{(1)}(V^{(1)})\  ; \\ 
U^{(1)}_c =\sigma_c^{(1)}(W^{(1)})\setminus \sigma_c^{(2)}(W^{(2)})\ ; \quad \quad\quad 
U^{(2)}_c =\sigma_c^{(2)}(W^{(2)})\setminus \sigma_c^{(1)}(W^{(1)})\  ; 
\end{align*}
Again, $|U^{(1)}_r|$, $|U^{(2)}_r|$, $|U^{(1)}_c|$, and $|U^{(2)}_c|$ only  depend on $(\sigma^{(1)},\sigma^{(2)})$ through the matching $\mathbf{M}$. 
We have, for $i\in \{1,2\}$,
\begin{align}\label{eq:unmatched}
    |V^{(i)}| = |\mathbf M_r| + |U_r^{(i)}|\, \quad \quad |W^{(i)}| = |\mathbf M_c| + |U_c^{(i)}|\enspace . 
\end{align}
Write also
$$\mathbf M_{\mathrm{SM}}(\mathbf M_r) \subset \mathbf M_r,~~~~~~\mathbf M_{\mathrm{SM}}(\mathbf M_c) \subset \mathbf M_c,~~~~~~\mathrm{and}~~~~~\mathbf M_{\mathrm{SM}}(\mathbf M) = (\mathbf M_{\mathrm{SM}}(\mathbf M_r),\mathbf M_{\mathrm{SM}}(\mathbf M_c)) $$
for the set of node matches of $(G^{(1)},G^{(2)})$ that are matched, and yet that are not pruned when creating the symmetric difference graph $G_{\Delta}$.
This is the set of {\bf semi-matched nodes} - i.e.~at least one of their incident edges is not matched.  The remaining pairs of nodes $\mathbf M \setminus \mathbf M_{\mathrm{SM}}$ are said to be {\bf perfectly matched} as all the edges incident to them are matched. We write  $\mathbf M_{\mathrm{PM}} = \mathbf M_{\mathrm{PM}}(\mathbf M)$ for the set of perfect matches in $\mathbf M$. Note that 
\begin{equation}\label{eq:PMSM}
     |V^{(1)}| +|V^{(2)}| =   |V_{\Delta}| + |(\mathbf M_{\mathrm{SM}})_r| + 2|(\mathbf M_{\mathrm{PM}})_r|\enspace  .
\end{equation}

\medskip 

\noindent
\paragraph{ Definition of some relevant sets of nodes matchings.} 
Given $(G^{(1)},G^{(2)})$ and a matching $\mathbf M=(\mathbf M_r,\mathbf M_c)$, we define the pruned matching $\mathbf{M}^{-}=(\mathbf{M}_r^-,\mathbf{M}_c)$ as the matching $\mathbf{M}_r$ to which we remove $(v_1^{(1)},v_1^{(2)})$  if either $v_1^{(1)}$ is isolated in $G^{(1)}$ or $v_1^{(2)}$ is isolated in $G^{(2)}$. This definition accounts for the fact that, when isolated, the nodes $v_1^{(j)}$ or $j=1,2$ do not play a role in the corresponding polynomials. Then, we define  $\mathcal M^\star\subset \mathcal{M}$ for the collection of matchings $\mathbf{M}$ such that all connected components of $G^{(1)}$ and of $G^{(2)}$ intersect with $\mathbf{M}^-$. 
Finally, we introduce  $\mathcal M_{\mathrm{PM}}\subset \mathcal{M}$ for 
  the collection of perfect matchings, that is matchings $\mathcal M$ such that all the nodes in $(V^{(1)},W^{(1)})$ and $(V^{(2)},W^{(2)})$ are {\bf perfectly matched}. Note that, if $\mathbf M\in \mathcal M_{\mathrm{PM}}$, then $G_{\Delta}$ is the empty graph (with $E_{\Delta} = \emptyset$). Besides, $\mathcal{M}_{\mathrm{PM}} \neq \emptyset$ if and only $G^{(1)}$ and $G^{(2)}$ are equivalent, which amounts to $G^{(1)}= G^{(2)}$ when $G^{(1)}, G^{(2)} \in \mathcal G^{\star}_{\leq D}$.

\paragraph{Shadow matchings} Given two sets $\bar U^{(1)} = (\bar U^{(1)}_r,\bar U^{(1)}_c)\subset V^{(1)} \times W^{(1)}$ and  $\bar U^{(2)} = (\bar U^{(2)}_r,\bar U^{(2)}_c)\subset V^{(2)} \times W^{(2)}$ and for a set of node matches $\underline{\mathbf M} = (\underline{\mathbf M}_r,\underline{\mathbf M}_c) \in \mathcal M_r\times \mathcal{M}_c$, we define $\mathcal M_{\mathrm{shadow}}(\bar U^{(1)},\bar U^{(2)}, \underline{\mathbf M})$
 as the collection of matchings $\mathbf{M}$ satisfying 
 \begin{equation}\label{eq:definition:mshadow}
 \mathbf M_{\mathrm{SM}}(\mathbf M) = \underline{\mathbf M}, \quad  (\sigma^{(a)}_b)^{-1}(U_b^{(a)})=\bar U^{(a)}_b \text{ for }a=1,2 \text{ and }b=r,c \enspace  ,
\end{equation}
 for any  $(\sigma^{(1)},\sigma^{(2)})\in \GS(\mathbf{M})$. Note that, as long as~\eqref{eq:definition:mshadow} is satisfied for one labeling $(\sigma^{(1)},\sigma^{(2)})\in \GS(\mathbf{M})$, is it satisfied for all such $(\sigma^{(1)},\sigma^{(2)})$. Equivalently, $\mathcal M_{\mathrm{shadow}}(\bar U^{(1)},\bar U^{(2)}, \underline{\mathbf M})$ is the collection of all matchings that lead to the set $\underline{\mathbf M}$ of semi-matched nodes and such that  $\bar U^{(1)},\bar U^{(2)}$ correspond  to unmatched nodes in resp.~$G^{(1)}, G^{(2)}$. We say that these matchings satisfy a given {\bf shadow} $(\bar U^{(1)},\bar U^{(2)}, \underline{\mathbf M} )$. The only thing that can vary between two elements of $\mathcal M_{\mathrm{shadow}}(\bar U^{(1)},\bar U^{(2)}, \underline{\mathbf M} )$ is the matching of the nodes that are not in $\bar U^{(1)},\bar U^{(2)}$, or part of a pair of nodes in $\underline{\mathbf M}$. This matching must however ensure that all of these nodes are perfectly matched. We refer to~\cite{CGGV25} for further explanations and illustrations.

\medskip

\noindent
\paragraph{ Edit Distance between graphs.} For any two templates $G^{(1)}$ and $G^{(2)}$, we define  the so-called edit-distance.
\begin{equation}\label{eq:definition:edit:distance}
d(G^{(1)}, G^{(2)}) := \min_{\mathbf M \in \mathcal M} |E_{\Delta}| \enspace .
\end{equation}
Note that $d(G^{(1)}, G^{(2)})=0$ if and only if $G^{(1)}$ and $G^{(2)}$ are equivalent. As a consequence, if $G^{(1)}$ and $G^{(2)}$ are in $\mathcal{G}_{\leq D}$, the edit distance is equal to $0$ if and only if $G^{(1)}=G^{(2)}$.

\subsubsection{Proof of Theorem~\ref{prop:compest}}

\begin{proof}[Proof of Theorem~\ref{prop:compest}]
We first state the following lemma.
\begin{lemma}\label{lem:adv}
    If Equation~\eqref{eq:signal1} holds for $c_{\texttt{s}}>0$ large enough, we have 
\begin{align*}
|\mathbb E[x^{\star} \Psi^{\star}_G]| 
&\leq \frac{K_n}{n} D^{-4c_{\texttt{s}}|E|}\enspace, 
\end{align*}
for any $D\geq 2$ and $G\in \cG^{\star}_{\leq D}$. 
\end{lemma}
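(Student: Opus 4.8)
The statement to prove is Lemma~\ref{lem:adv}, which bounds the single first moment $|\mathbb E[x^{\star}\Psi^{\star}_G]|$ by $\tfrac{K_n}{n}D^{-4c_{\texttt{s}}|E|}$ for every template $G\in\mathcal G^{\star}_{\leq D}$. The overall strategy is to expand $\Psi^{\star}_G$ over its labelings, use that $x^\star=\mathbf 1\{\exists k:M_{1,k}>0\}$ is itself (up to renormalization) a centered connected-component-type statistic on the first row, and then carefully count surviving terms. First I would unfold the definitions: $\Psi^{\star}_G=\overline P^{\star}_G/\sqrt{\mathbb V^{\star}(G)}$, where $\overline P^{\star}_G=\sum_{\sigma_r\in\GS_V^\star,\sigma_c\in\GS_W}\prod_{l=1}^{cc}\bigl[P_{G_l,\sigma_r,\sigma_l}-\mathbb E P_{G_l,\sigma_r,\sigma_l}\bigr]$. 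Since the randomness in $M$ factorizes over rows and columns through the independent indicators $\mathbf 1\{i\in S\}$, $\mathbf 1\{j\in T\}$, the expectation $\mathbb E[x^{\star}\,\overline P^{\star}_{G,\sigma_r,\sigma_c}]$ can be computed factor by factor. The key observation is that the centering $P_{G_l,\sigma_r,\sigma_l}-\mathbb E[\cdot]$ kills any connected component that does not "touch" the conditioning event $x^\star$: a connected component whose node set, once labelled, is disjoint from row $1$ contributes a factor whose expectation against $x^\star$ equals $\mathbb E[x^\star]\cdot\mathbb E[P_{G_l}] - \mathbb E[x^\star]\mathbb E[P_{G_l}]=0$ by independence — so only labelings in which \emph{every} connected component is incident to the row indexed $1$, i.e. $v_1$ lies in each $G_l$, can survive. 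But a template in $\mathcal G^\star_{\leq D}$ has all components sharing the special node $v_1$ only if $cc=1$; more precisely one shows the surviving labelings force $G$ to be connected through $v_1$, and the number of such labelings is at most $\tfrac{(n-1)!}{(n-|V|)!}\cdot\tfrac{d!}{(d-|W|)!}$ times a bounded automorphism factor — the same quantity appearing in $\mathbb V^\star(G)$.

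The second step is the quantitative bound. For a labeling $\sigma=(\sigma_r,\sigma_c)$ with $\sigma_r(v_1)=1$ and with the surviving support property, a direct computation using $\mathbb E[\mathbf 1\{i\in S\}]=K_n/n$, $\mathbb E[\mathbf 1\{j\in T\}]=K_d/d$ gives
\[
\bigl|\mathbb E[x^{\star}\,\overline P^{\star}_{G,\sigma_r,\sigma_c}]\bigr|\ \lesssim\ \frac{K_n}{n}\cdot\Bigl(\frac{K_n}{n}\Bigr)^{|V|-1}\Bigl(\frac{K_d}{d}\Bigr)^{|W|}\,\lambda^{|E|}\,,
\]
the leading $K_n/n$ coming from the fact that $x^\star$ forces row $1$ into $S$ (this is where the extra factor $K_n/n$, rather than its square, enters) and the remaining row/column factors coming from the other nodes appearing in $G$, while each edge contributes a factor $\lambda$ for the conditional mean $M_{i,k}=\lambda$ on the block. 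Summing over the $\le \tfrac{(n-1)!}{(n-|V|)!}\tfrac{d!}{(d-|W|)!}$ surviving labelings (bounded above by $n^{|V|-1}d^{|W|}$) and dividing by $\sqrt{\mathbb V^\star(G)}\ge c\,\sqrt{n^{|V|-1}d^{|W|}}$ (using $|\mathrm{Aut}^\star(G)|\ge 1$), the factorials telescope and we are left with
\[
|\mathbb E[x^{\star}\Psi^{\star}_G]|\ \lesssim\ \frac{K_n}{n}\ \sqrt{n^{|V|-1}d^{|W|}}\ \Bigl(\frac{K_n}{n}\Bigr)^{|V|-1}\Bigl(\frac{K_d}{d}\Bigr)^{|W|}\lambda^{|E|}
=\frac{K_n}{n}\ \Bigl(\frac{\lambda K_n}{\sqrt n}\Bigr)^{|V|-1}_{\!\!*}\Bigl(\frac{\lambda K_d}{\sqrt d}\Bigr)^{|W|}_{\!\!*}\,\lambda^{\,|E|-|V|-|W|+1}\,,
\]
where I regroup the powers of $\lambda$ so that $|E|-|V|-|W|+1\ge 0$ by Lemma~\ref{lem:countedges} (for a connected graph $|E|+1\ge|V|+|W|$). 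Under the signal condition~\eqref{eq:signal1}, each of the three grouped base quantities $\lambda K_n/\sqrt n$, $\lambda K_d/\sqrt d$, $\lambda$ is $\le D^{-8c_{\texttt{s}}}$, so the whole product is at most $\tfrac{K_n}{n}D^{-8c_{\texttt{s}}(|V|+|W|-2+|E|-|V|-|W|+1)}=\tfrac{K_n}{n}D^{-8c_{\texttt{s}}(|E|-1)}$. Since $|E|\ge 1$ and $|E|-1\ge |E|/2$ for $|E|\ge 2$ while the bound is trivial for $|E|=1$ (there $D^{-8c_{\texttt s}}\le D^{-4c_{\texttt s}}$), this yields $|\mathbb E[x^{\star}\Psi^{\star}_G]|\le \tfrac{K_n}{n}D^{-4c_{\texttt{s}}|E|}$, as claimed, provided $c_{\texttt{s}}$ is a large enough absolute constant so that the implicit constants from the labeling count, the automorphism lower bound, and the $\binom{a}{b}\le 2^a$ slack (cf.~Lemma~\ref{lem:upper:bound:N}) are absorbed.

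\textbf{Main obstacle.} The delicate point is the bookkeeping in the first step: showing that the centering in $\overline P^\star_{G,\sigma_r,\sigma_c}$ annihilates exactly the labelings whose connected components do not all touch $v_1\mapsto 1$, and controlling the cross terms that arise when expanding $\prod_l[P_{G_l,\sigma_r,\sigma_l}-\mathbb E P_{G_l,\sigma_r,\sigma_l}]$ — one must verify that no partial product of centered factors re-creates a spurious uncentered contribution that correlates with $x^\star$ through a component disjoint from row $1$. This is handled exactly as in~\cite{CGGV25}: one conditions on the column-indicators $(\mathbf 1\{j\in T\})_j$ first, notes that conditionally on these the row-factors are independent across distinct rows, and then uses that $x^\star$ depends only on row $1$, so any centered factor supported on rows $\ne 1$ has conditional expectation zero and decouples. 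The asymmetry between rows and columns (only the first \emph{row} is distinguished) means the column side is treated by the plain reduction of Lemma~\ref{lem:reduction::permutation2} while the row side carries the $\GS_V^\star$ constraint; keeping the two sides straight in the counting is the part most prone to error, but is otherwise routine once the decoupling is set up.
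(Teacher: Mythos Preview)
Your approach is essentially the paper's: the contribution vanishes unless $\mathrm{cc}_G=1$ with $v_1$ non-isolated (by independence and centering of any component not touching row $1$), and in the surviving case one bounds $\mathbb E[P_G]/\sqrt{\mathbb V^\star(G)}$ directly to obtain $(K_n/n)D^{-8c_{\texttt{s}}|E|}$. Two remarks: your exponent sum $(|V|-1)+|W|+(|E|-|V|-|W|+1)$ equals $|E|$, not $|E|-1$, so the extra case analysis for $|E|=1$ is unnecessary; and your ``main obstacle'' is overstated --- for $\mathrm{cc}_G=1$ there is a single factor $\overline P^\star_{G,\sigma}=P_{G,\sigma}-\mathbb E[P_{G,\sigma}]$ with no cross terms to track, while for $\mathrm{cc}_G>1$ the factorization $\overline P^\star_G=\overline P^\star_{G'}\,\overline P^\star_{G''}$ with $G'$ a component avoiding $v_1$ and $\mathbb E[\overline P^\star_{G'}]=0$ gives zero in one line from the disjoint node sets, so no conditioning on column indicators and no appeal to Lemma~\ref{lem:upper:bound:N} is needed.
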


As a consequence, we have 
\begin{align*}
\mathbb E^2[x^{\star}]+ \sum_{G\in \cG^{\star}_{\leq D}} \mathbb E^2[x^{\star} \Psi^{\star}_G]
&\leq \frac{K^2_n}{n^2}\left[1+ \sum_{G: G \in \mathcal G_{\leq D}: |E|>1} D^{-8c_{\texttt{s}} |E|}\right] \\
& \leq \frac{K^2_n}{n^2}\left[1+ \sum_{e:1\leq e\leq D} (2e)^{2e}D^{-8c_{\texttt{s}} e}\right]  \leq  \frac{K^2_n}{n^2}\left[1+2D^{-4c_{\texttt{s}}}\right]
\end{align*}
Then, together with Theorem~\ref{thm:isorefo2},  this concludes the proof. 
\end{proof}

\begin{proof}[Proof of Lemma~\ref{lem:adv}]
    
Denote $\mathrm{cc}_G$ the number of connected components of $G$.

\paragraph{Case 1: $\mathrm{cc}_G =1$.} In this case, $v_1$ belongs to this connected component and
\begin{align*}\mathbb E[x^{\star} \Psi^{\star}_G ] &= \frac{1}{\sqrt{\mathbb V^{\star}(G)}}\left[\mathbb E[P_G x^{\star}] - \mathbb E[P_G] \mathbb E[x^{\star}]\right]= \frac{1}{\sqrt{\mathbb V^{\star}(G)}}\left[\mathbb E[P_G] - \mathbb E[P_G] \mathbb E[x^{\star}]\right]\\
	&= \frac{\mathbb E[P_G]}{\sqrt{\mathbb V^{\star}(G)}}\left[1 - \frac{K_n}{n}\right].
\end{align*}
Recall the definition of $\mathbb V^{\star}(G)$. 
Since $|E| \leq D$
, we have 
\begin{align*}
	\left|\mathbb E[x^{\star} \Psi^{\star}_G ]\right| &\leq \frac{\mathbb E[P_G]}{\sqrt{\mathbb V^{\star}(G)}} \leq \sqrt{\frac{(n-1)!d!}{(n-|V|)!(d-|W|)!}} \left(\frac{K_n}{n}\right)^{|V|}\left(\frac{K_d}{d}\right)^{|W|}\lambda^{|E|}\\
	&\leq n^{(|V|-1)/2}d^{|W|/2}\left(\frac{K_n}{n}\right)^{|V|}\left(\frac{K_d}{d}\right)^{|W|}\lambda^{|E|} = \frac{1}{\sqrt{n}}\left(\frac{K_n}{\sqrt{n}}\right)^{|V|}\left(\frac{K_d}{\sqrt{d}}\right)^{|W|}\lambda^{|E|}\\
	&= \frac{K_n}{n}\left(\frac{K_n}{\sqrt{n}}\lambda\right)^{|V|-1}\left(\frac{K_d}{\sqrt{d}}\lambda\right)^{|W|}\lambda^{|E|-|W|-|V|+1}\enspace . 
\end{align*}
Since $|E|\geq |W|+|V|-1$ for a connected bipartite graph, we deduce from  Equation~\eqref{eq:signal1} that 
\begin{align*}
	\left|\mathbb E[x^{\star} \Psi^{\star}_G ]\right| &\leq  \frac{K_n}{n}D^{-8c_{\texttt{s}}(|V|-1)}D^{-8c_{\texttt{s}}|W|}D^{-8c_{\texttt{s}}(|E|-|W|-|V|+1)} = \frac{K_n}{n} D^{-8c_{\texttt{s}}|E|}\enspace . 
\end{align*}

\paragraph{Case 2: $\mathrm{cc}_G >1$.} Then, there exists one connected component $G'=(V',W',E')$ in $G$ such that $v_1 \not\in V'$. Writing  $G''$ for the rest of the graph, we have $\overline{P}^{\star}_G = \overline{P}^{\star}_{G'}\overline{P}^{\star}_{G''}$. We conclude that 
$$\mathbb E[x^{\star} \Psi^{\star}_G ] = \mathbb E[\overline P^{\star}_{G'}]\mathbb E[x^{\star}\overline P^{\star}_{G''}] =0\enspace ,$$
since $\mathbb E[\overline P^{\star}_{G'}] = 0$. 
\end{proof}

\subsubsection{Proof of Theorem~\ref{thm:isorefo2}}

\begin{proof}[Proof of Theorem~\ref{thm:isorefo2}]

Define the Gram matrix $\Gamma$ by $\Gamma_{G^{(1)},G^{(2)}}=\E\left[\Psi^{\star}_{G^{(1)}} \Psi^{\star}_{G^{(2)}}\right]$ for any $G^{(1)}$ and $G^{(2)}$ in $\cG^{\star}_{\leq D}$, $\Gamma_{\emptyset,G}= \Gamma_{G, \emptyset}= \E[\Psi^{\star}_G]$ for any $G\in \cG^{\star}_{\leq D}$, and $\Gamma_{\emptyset,\emptyset}= 1$. To establish Theorem~\ref{thm:isorefo2}, we only have to prove that   $\|\Gamma -Id\|_{op}\leq 4D^{-c_{\texttt{s}/2}}$ where $Id$ is the identity matrix and $\|.\|_{op}$ is the operator norm.

First, observe that $\E[\Psi^\star_G]=0$ for any $G\in \cG^\star_{\leq D}$ because the polynomials $\overline{P}^\star_G$ are centered. Hence, $1$ is orthogonal to all $\Psi^{\star}_G$'s and we only have to focus on the submatrix of $\Gamma$ induced by the templates $G$'s. Then, we shall control individually each entry  $\Gamma_{G^{(1)},G^{(2)}}$. For this purpose, we start with a few simple observations.

Consider two templates $G^{(1)}, G^{(2)}$, some node matching $\mathbf M \in \mathcal M$ and $(\sigma^{(1)}, \sigma^{(2)})\in \GS(\mathbf{M})$. By definition of the model, we have $Y_{i,k}^2=1$ a.s. and $\mathbb{E}[Y_{i,k}|M]= M_{i,k}=\lambda\1\{i\in S_n\}\1\{k\in S_d\}$. This leads us to 
\begin{equation}~\label{eqn:old basis proof ingredients}
     \mathbb{E}\left[P_{G^{(1)}, \sigma^{(1)}}P_{G^{(2)},\sigma^{(2)}}\right] = \lambda^{|E_{\Delta}|}\left(\frac{K_n}{n}\right)^{|V_{\Delta}|}\left(\frac{K_d}{d}\right)^{|W_{\Delta}|}\ ; \quad \quad 
     \mathbb{E}\left[P_{G^{(1)}, \sigma^{(1)}}\right]= \lambda^{|E^{(1)}|}\left(\frac{K_n}{n}\right)^{|V^{(1)}|-\mathbf{1}\{v_1^{(1)} \text{isolated}\}}\left(\frac{K_d}{d}\right)^{|W^{(1)}|}
     \enspace . 
\end{equation}
Besides, as the the variables $\1\{i\in S_n\}$ and $\1\{i\in S_d\}$ are independent, it follows that, for any $(S^{(1)},T^{(1)})$ and $(S^{(2)},T^{(2)})$ with $S^{(1)}\cap S^{(2)}=\emptyset$ and $T^{(1)}\cap T^{(2)}=\emptyset$, and  measurable bounded function $f$ and $g$ we have 
\begin{equation}\label{eq:indep}
    \mathbf E\left[f(Y_{S^{(1)},T^{(1)}})g(Y_{S^{(2)},T^{(2)}})\right]=\mathbf E\left[f(Y_{S^{(1)},T^{(1)}})\right]\mathbf E\left[g(Y_{S^{(2)},T^{(2)}})\right]\enspace .
\end{equation}
In order to control $\E\left[\Psi^{\star}_{G^{(1)}} \Psi^{\star}_{G^{(2)}}\right]$, we first connect the covariance of the  $\overline{P}^{\star}_{G^{(1)}, \sigma^{(1)}}$'s to that of the polynomial $P_{G^{(1)}, \sigma^{(1)}}$. 

\begin{proposition} \label{eq:boundbar}
     Under Condition~\eqref{eq:signal1}, the following holds
    \begin{enumerate}
    \item if $\mathbf{M}\notin\mathcal{M}^{\star}$ we have $\mathbb{E}\left[\overline{P}^{\star}_{G^{(1)}, \sigma^{(1)}}\overline{P}^{\star}_{G^{(2)},\sigma^{(2)}}\right] = 0$ for any $(\sigma^{(1)},\sigma^{(2)})\in\GS[\mathbf M]$\ ; 
    \item if $\mathbf{M}\in \mathcal{M}^\star$ we have
    \begin{equation*}
        \left|\dfrac{\mathbb{E}\left[\overline{P}^{\star}_{G^{(1)}, \sigma^{(1)}}\overline{P}^{\star}_{G^{(2)},\sigma^{(2)}}\right] - \mathbb{E}\left[P_{G^{(1)}, \sigma^{(1)}}P_{G^{(2)},\sigma^{(2)}}\right]}{\mathbb{E}\left[P_{G^{(1)}, \sigma^{(1)},\sigma^{(1)}_c}P_{G^{(2)},\sigma^{(2)}}\right]}\right| \leq 4D\left(\frac{K_n}{n}\land \frac{K_d}{d}\right)\enspace  . 
    \end{equation*}
    \end{enumerate} 
\end{proposition}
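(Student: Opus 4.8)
Following~\cite{CGGV25}, the plan is to treat the two items separately, both times reducing everything to the closed-form moment identities in~\eqref{eqn:old basis proof ingredients} together with the fact that, conditionally on the random sets $(S_n,S_d)$, the entries of $Y$ are independent, so that two polynomials of $Y$ with disjoint row-index sets and disjoint column-index sets are independent. Write $p^{(a)}_l:=P_{G^{(a)}_l,\sigma^{(a)}}$ and $m^{(a)}_l:=\mathbb E[p^{(a)}_l]$ for the $l$-th connected component (with an edge) of $G^{(a)}$, so that $\overline P^{\star}_{G^{(a)},\sigma^{(a)}}=\prod_l(p^{(a)}_l-m^{(a)}_l)$ and $P_{G^{(a)},\sigma^{(a)}}=\prod_l p^{(a)}_l$.

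For item~1, I would take $\mathbf M\notin\mathcal M^{\star}$, so some connected component $C$ of, say, $G^{(1)}$ does not meet $\mathbf M^{-}$. The first step is to check that $\sigma^{(1)}$ assigns to the nodes of $C$ a set of row- and column-labels disjoint from all labels appearing elsewhere in $G^{(1)}[\sigma^{(1)}]$ (immediate from injectivity and node-disjointness of distinct components) and from all labels appearing in $G^{(2)}[\sigma^{(2)}]$: a non-$v_1$ node of $C$ carrying a label shared with $G^{(2)}$ would lie in $\mathbf M^{-}$, whereas if $v_1^{(1)}\in C$ shared its label with $G^{(2)}$ then $v_1^{(2)}$ would be non-isolated in $G^{(2)}$ and so $(v_1^{(1)},v_1^{(2)})\in\mathbf M^{-}$ — either case contradicting the choice of $C$. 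Consequently $\overline P^{\star}_{C,\sigma^{(1)}}=p^{(1)}_C-m^{(1)}_C$ is independent of the product of the remaining factors of $\overline P^{\star}_{G^{(1)},\sigma^{(1)}}\overline P^{\star}_{G^{(2)},\sigma^{(2)}}$ and is centered, so the whole expectation vanishes; this step does not use~\eqref{eq:signal1}.

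For item~2, I would assume $\mathbf M\in\mathcal M^{\star}$ and introduce the bipartite \emph{interaction graph} whose vertices are the connected components (with an edge) of $G^{(1)}[\sigma^{(1)}]$ on one side and of $G^{(2)}[\sigma^{(2)}]$ on the other, two components being joined exactly when they share a matched node. Running the argument of item~1 in reverse, $\mathbf M\in\mathcal M^{\star}$ forces the interaction graph to have no isolated vertex, so each of its connected pieces $\mathcal C$ contains at least one component from each side; and components from distinct pieces use disjoint label sets, hence are independent. This lets me factorize
\[
\mathbb E\big[\overline P^{\star}_{G^{(1)},\sigma^{(1)}}\overline P^{\star}_{G^{(2)},\sigma^{(2)}}\big]=\prod_{\mathcal C}A_{\mathcal C},\qquad \mathbb E\big[P_{G^{(1)},\sigma^{(1)}}P_{G^{(2)},\sigma^{(2)}}\big]=\prod_{\mathcal C}B_{\mathcal C},
\]
with $A_{\mathcal C}$, $B_{\mathcal C}$ the centered and uncentered moments restricted to the components of $\mathcal C$ and, by~\eqref{eqn:old basis proof ingredients}, $B_{\mathcal C}=\lambda^{|E^{\mathcal C}_{\Delta}|}(K_n/n)^{|V^{\mathcal C}_{\Delta}|}(K_d/d)^{|W^{\mathcal C}_{\Delta}|}>0$. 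It then suffices to prove a local bound $|A_{\mathcal C}/B_{\mathcal C}-1|\le\varepsilon_{\mathcal C}$ with $\sum_{\mathcal C}\varepsilon_{\mathcal C}\le 3D\,(K_n/n\wedge K_d/d)$, since then $\prod_{\mathcal C}\big(1+(A_{\mathcal C}/B_{\mathcal C}-1)\big)-1$ is controlled by $\exp(\sum_{\mathcal C}\varepsilon_{\mathcal C})-1\le 4D\,(K_n/n\wedge K_d/d)$, using that~\eqref{eq:signal1} makes this exponent tiny.

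For the local bound, I would expand $A_{\mathcal C}=B_{\mathcal C}+\sum_{\emptyset\ne T}(-1)^{|T|}\mathbb E\big[\prod_{l\notin T}p_l\big]\prod_{l\in T}m_l$ over nonempty subsets $T$ of the components of $\mathcal C$ (many terms cancel, since the centered $p_l-m_l$ kill any summand in which a component becomes independent of all the others). For a surviving summand, replacing the components of $T$ by their means deletes from the merged labelled graph a union of connected components, and since $\mathcal C$ is connected in the interaction graph the remaining symmetric-difference graph still carries at least one formerly matched node; comparing closed forms via~\eqref{eqn:old basis proof ingredients} and~\eqref{eq:PMSM}, the ratio $\mathbb E[\prod_{l\notin T}p_l]\prod_{l\in T}m_l/B_{\mathcal C}$ equals $\lambda^{2\cdot(\#\text{matched edges lost})}(K_n/n)^{\#\text{matched row nodes lost}}(K_d/d)^{\#\text{matched column nodes lost}}$ with all exponents $\ge0$ and their total $\ge1$, hence it is at most $(K_n/n\wedge K_d/d)$ times a power of $D^{-8c_{\texttt{s}}}$ growing with the size of $T$. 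Summing the resulting geometric series over $T$ gives $\varepsilon_{\mathcal C}$ of order $|E_{\mathcal C}|\,(K_n/n\wedge K_d/d)$, and $\sum_{\mathcal C}|E_{\mathcal C}|\le|E^{(1)}|+|E^{(2)}|\le 2D$. The main obstacle is precisely this last estimate: one must argue cleanly that deleting connected components from a connected piece always destroys a genuine matched node, track which of $K_n/n$, $K_d/d$ (and how many powers of $\lambda$) is gained, and make sure that the expansion terms, once weighted by their magnitudes, contribute only $O(|E_{\mathcal C}|)$ in total rather than exponentially many — which is where the smallness $K_n/n\vee K_d/d\vee\lambda\le D^{-8c_{\texttt{s}}}$ of~\eqref{eq:signal1} is spent.
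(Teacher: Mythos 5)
Your route is essentially the paper's. For item~1 you pick a connected component not meeting $\mathbf M^-$, observe its labels are disjoint from every label that actually enters the product of polynomials, and invoke independence plus centering; this is exactly what the paper does, and your careful treatment of the $v_1$ edge case (that when $v_1^{(2)}$ is isolated in $G^{(2)}$ the shared label $1$ is harmless because $v_1^{(2)}$ does not appear in $\overline P^{\star}_{G^{(2)},\sigma^{(2)}}$) is in fact a bit sharper than the paper's wording, which speaks of ``matched in $\mathbf M$'' where $\mathbf M^-$ is intended. For item~2, your interaction-graph decomposition plus local bounds recombined via $\prod_{\mathcal C}(1+\varepsilon_{\mathcal C})\le\exp(\sum_{\mathcal C}\varepsilon_{\mathcal C})$ is a regrouping of the very same inclusion--exclusion expansion: the paper simply expands globally over all pairs of subsets $(S_1,S_2)$ of connected components of $G^{(1)}$ and $G^{(2)}$, bounds each term by $(K_n/n\wedge K_d/d)^{|S_1|\vee|S_2|}\,\mathbb E[P_{G^{(1)},\sigma^{(1)}}P_{G^{(2)},\sigma^{(2)}}]$ using that every component meets $\mathbf M^-$, and sums a single geometric series $\sum_{k\ge1}[2D(K_n/n\wedge K_d/d)]^k$. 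The extra interaction-graph layer and the exponentiation buy nothing here since, after distributing, the two expansions coincide term by term. The step you flag as ``the main obstacle''---that each component deleted from the joint expectation and replaced by its mean costs at least one factor $K_n/n$ or $K_d/d$, with the cost accumulating over $|T|$ (resp. $|S_1|\vee|S_2|$)---is the exact estimate the paper also records with a single sentence deduced from~\eqref{eqn:old basis proof ingredients}; it is correct, but if you want to keep your organization you should make that node/edge accounting explicit rather than leaving it as the acknowledged gap, since without it neither the geometric series nor your $\varepsilon_{\mathcal C}\asymp|E_{\mathcal C}|(K_n/n\wedge K_d/d)$ is justified.
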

Then, working out~\eqref{eqn:old basis proof ingredients} and carefully summing over all labelings $(\sigma^{(1)},\sigma^{(2)})$, we can control $\mathbb{E}[\Psi_{G^{(1)}}\Psi_{G^{(2)}}]$. 
\begin{proposition}\label{prop:scalprod}
    Consider two templates $G^{(1)},G^{(2)} \in \mathcal G_{\leq D}$. We have
    \begin{equation*}
     |\Gamma_{G^{(1)}, G^{(2)}} -1|=    \left|\mathbb E\left[\Psi^{\star}_{G^{(1)}}\Psi^{\star}_{G^{(2)}}\right] - \mathbf 1\{G^{(1)} = G^{(2)}\}\right|\leq 2D^{-c_{\mathrm{s}} (d (G^{(1)}, G^{(2)})\lor 1)}\enspace , 
    \end{equation*}
    where we recall that $d(.,.)$ is the edit distance. 
\end{proposition}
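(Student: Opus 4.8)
The plan is to expand $\Gamma_{G^{(1)},G^{(2)}} = \mathbb{E}[\Psi^\star_{G^{(1)}}\Psi^\star_{G^{(2)}}]$ by summing over all node matchings $\mathbf M\in\mathcal M$, and to show that the dominant contribution comes only from near-perfect matchings. Writing out the definitions \eqref{eq:definition:P_G:est} and \eqref{eqn:variance of graph2}, we have
\[
\Gamma_{G^{(1)},G^{(2)}} \;=\; \frac{1}{\sqrt{\mathbb V^\star(G^{(1)})\mathbb V^\star(G^{(2)})}}\sum_{\mathbf M\in\mathcal M}\;\sum_{(\sigma^{(1)},\sigma^{(2)})\in\GS(\mathbf M)} \mathbb E\!\left[\overline P^\star_{G^{(1)},\sigma^{(1)}}\,\overline P^\star_{G^{(2)},\sigma^{(2)}}\right].
\]
By Proposition~\ref{eq:boundbar}(1), only matchings $\mathbf M\in\mathcal M^\star$ contribute. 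For those, Proposition~\ref{eq:boundbar}(2) lets me replace $\mathbb E[\overline P^\star\overline P^\star]$ by $\mathbb E[P\,P]$ up to a multiplicative error $1+4D(K_n/n\wedge K_d/d)$, and \eqref{eqn:old basis proof ingredients} gives the clean formula $\mathbb E[P_{G^{(1)},\sigma^{(1)}}P_{G^{(2)},\sigma^{(2)}}] = \lambda^{|E_\Delta|}(K_n/n)^{|V_\Delta|}(K_d/d)^{|W_\Delta|}$, which depends on $\mathbf M$ only through the symmetric-difference graph.

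The key combinatorial step is then to count, for each fixed shadow $(\bar U^{(1)},\bar U^{(2)},\underline{\mathbf M})$ — equivalently, for each isomorphism type of $G_\Delta$ and each prescribed set of semi-matched nodes — the number of labelings $(\sigma^{(1)},\sigma^{(2)})$ realizing it, and to compare this against the normalization $\mathbb V^\star(G^{(1)})\mathbb V^\star(G^{(2)})$. The perfectly-matched nodes can be labeled in roughly $n^{|(\mathbf M_{\mathrm{PM}})_r|}d^{|(\mathbf M_{\mathrm{PM}})_c|}$ ways, the unmatched nodes of $G^{(i)}$ in roughly $n^{|U_r^{(i)}|}d^{|U_c^{(i)}|}$ ways, and I use identities \eqref{eq:unmatched} and \eqref{eq:PMSM} relating $|V^{(i)}|,|V_\Delta|$, the semi-matched and perfectly-matched counts. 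After dividing by $\sqrt{\mathbb V^\star(G^{(1)})\mathbb V^\star(G^{(2)})} \asymp n^{(|V^{(1)}|+|V^{(2)}|)/2-1}d^{(|W^{(1)}|+|W^{(2)}|)/2}$ (the automorphism factors are order-one absorbable constants), the powers of $n$ and $d$ attached to the unmatched and semi-matched nodes combine with the $\lambda^{|E_\Delta|}(K_n/n)^{|V_\Delta|}(K_d/d)^{|W_\Delta|}$ factor to produce, for each connected component of $G_\Delta$, a product of factors each of the form $\lambda K_n/\sqrt n$, $\lambda K_d/\sqrt d$, $K_n/n$, $K_d/d$, or $\lambda$ — exactly the quantities bounded by $D^{-8c_{\texttt s}}$ in \eqref{eq:signal1}. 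Using $|E_\Delta|\ge |V_\Delta|\vee|W_\Delta|$ on each component (and the fact that each of the $\mathrm{cc}_\Delta$ components of $G_\Delta$ must carry at least one edge), every shadow with $|E_\Delta|\ge 1$ contributes at most $D^{-c_{\texttt s}|E_\Delta|}$ times a crude count of $(\text{poly in }D)^{|E_\Delta|}$ of possible shadows of that size; summing the geometric series over $|E_\Delta|\ge d(G^{(1)},G^{(2)})\vee 1$ yields the claimed bound $2D^{-c_{\texttt s}(d(G^{(1)},G^{(2)})\vee 1)}$. The case $G^{(1)}=G^{(2)}$ is handled separately: the perfect matching $\mathbf M\in\mathcal M_{\mathrm{PM}}$ gives $G_\Delta=\emptyset$ and contributes exactly $1$ (this is essentially Lemma~\ref{lem:ortho1} adapted to the $\star$-family), and all other matchings have $|E_\Delta|\ge 1$.

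The main obstacle is the bookkeeping in the counting step: one must verify that the exponents of $n$ and $d$ arising from labeling the semi-matched nodes, the unmatched nodes and the perfectly-matched nodes recombine with the $\mathbb V^\star$ normalization so that \emph{no} positive power of $n$ or $d$ survives, i.e. that every leftover factor is one of the five ``signal'' quantities controlled by \eqref{eq:signal1}. The parameter $c_{\texttt s}$ being ``large enough'' is precisely what absorbs (i) the polynomial-in-$D$ overcounting of shadows, (ii) the automorphism-group factors, and (iii) the $1+4D(K_n/n\wedge K_d/d)$ slack from Proposition~\ref{eq:boundbar}. This is the same scheme as in \cite{CGGV25}; the only genuinely new ingredient here is carrying the row/column asymmetry through every estimate, i.e. tracking $n$ versus $d$ and $K_n$ versus $K_d$ separately rather than exploiting a symmetric structure — which is tedious but does not change the shape of the argument.
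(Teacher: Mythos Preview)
Your plan follows the paper's approach closely---decompose over matchings, restrict to $\mathcal M^\star$ via Proposition~\ref{eq:boundbar}, group by shadow, and bound each shadow's contribution as a product of the signal quantities in~\eqref{eq:signal1}. The overall architecture is right, but two of your justifications are genuinely wrong and would not close the argument.

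First, the automorphism factors are \emph{not} ``order-one absorbable constants'', and $c_{\texttt s}$ cannot absorb them. For a template $G$ made of $k$ disjoint copies of a fixed connected bipartite piece, $|\mathrm{Aut}^\star(G)|\ge (k-1)!$, which is super-polynomial in $D$. The paper handles this via Lemma~\ref{lem:control:shadow}: the number of matchings sharing a given shadow is at most $\min\bigl(|\mathrm{Aut}^\star(G^{(1)})|,|\mathrm{Aut}^\star(G^{(2)})|\bigr)$, and this is precisely what cancels the factor $\bigl(|\mathrm{Aut}^\star(G^{(1)})||\mathrm{Aut}^\star(G^{(2)})|\bigr)^{-1/2}$ coming from $\mathbb V^\star$. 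Your labeling count $n^{|(\mathbf M_{\mathrm{PM}})_r|}\cdots$ is the count for a \emph{single} matching in the shadow, not for all of them; without Lemma~\ref{lem:control:shadow} you have no control on the multiplicity, and the bound does not follow.

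Second, to ensure the exponent of $\lambda$ in $A(\mathbf M)$ is nonnegative you need $|E_\Delta|\ge |U^{(1)}_r|+|U^{(2)}_r|+|U^{(1)}_c|+|U^{(2)}_c|$. The paper gets this from $|E_\Delta|\ge |V_\Delta|+|W_\Delta|-\mathrm{cc}_\Delta$ together with $\mathrm{cc}_\Delta\le |\mathbf M_{\mathrm{SM}}|$; the latter holds because $\mathbf M\in\mathcal M^\star$ forces every connected component of $G_\Delta$ to contain at least one semi-matched node. Your stated inequality $|E_\Delta|\ge |V_\Delta|\vee|W_\Delta|$ is weaker and does not yield this, and ``each component of $G_\Delta$ carries at least one edge'' is trivially true and not the relevant fact.
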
 
We can transform the entry-wise result of the last proposition on how $\Gamma$ is close to the identity matrix to an operator norm result, which is then very related to Theorem~\ref{thm:isorefo2} in our specific model.
\begin{proposition}\label{prop:ortho}
We have:
    \[
\|\Gamma - \mathrm{Id}\|_{op}\leq 4 D^{-c_{\texttt{s}}/2}\enspace . 
\]
\end{proposition}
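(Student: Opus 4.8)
\noindent\textbf{Proof plan for Proposition~\ref{prop:ortho}.}
The plan is to turn the entrywise estimate of Proposition~\ref{prop:scalprod} into the operator-norm bound via a Gershgorin (Schur-test) argument. First I would discard the coordinate $\emptyset$: as recorded in the proof of Theorem~\ref{thm:isorefo2}, $\Gamma_{\emptyset,\emptyset}=1$ and $\mathbb{E}[\Psi^{\star}_G]=0$ for every $G\in\mathcal G^{\star}_{\leq D}$, so the row and column of $\Gamma-\mathrm{Id}$ indexed by $\emptyset$ vanish, and it is enough to bound the operator norm of the finite symmetric submatrix indexed by $\mathcal G^{\star}_{\leq D}$. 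For a symmetric matrix, Gershgorin's theorem gives
\[
\|\Gamma-\mathrm{Id}\|_{op}\ \leq\ \max_{G^{(1)}\in\mathcal G^{\star}_{\leq D}}\ \sum_{G^{(2)}\in\mathcal G^{\star}_{\leq D}}\big|(\Gamma-\mathrm{Id})_{G^{(1)},G^{(2)}}\big|\enspace ,
\]
so the whole statement reduces to bounding, for each fixed $G^{(1)}$, the sum of the absolute values in its row.

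Next I would split this row sum according to the edit distance $\delta:=d(G^{(1)},G^{(2)})$. The diagonal term ($\delta=0$) is at most $2D^{-c_{\texttt{s}}}$ by Proposition~\ref{prop:scalprod}, while for $\delta\geq 1$ that proposition yields $|\Gamma_{G^{(1)},G^{(2)}}|\leq 2D^{-c_{\texttt{s}}\delta}$. Thus the only genuinely new ingredient needed is a counting bound of the form
\[
N_\delta(G^{(1)})\ :=\ \#\big\{G^{(2)}\in\mathcal G^{\star}_{\leq D}:\ d(G^{(1)},G^{(2)})=\delta\big\}\ \leq\ (CD)^{C'\delta}
\]
for some absolute constants $C,C'>0$ and all $1\leq\delta\leq 2D$. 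This is the step I expect to be the main obstacle. The argument I have in mind: if $d(G^{(1)},G^{(2)})=\delta$, pick a matching $\mathbf M\in\mathcal M$ with $|E_\Delta|=\delta$; then the common graph $G_\cap$ is obtained from $G^{(1)}$ by deleting at most $|E^{(1)}|-|E_\cap|\leq|E_\Delta|=\delta$ edges together with the resulting isolated nodes, so there are at most $\sum_{j\leq\delta}\binom{D}{j}\leq(2D)^{\delta}$ candidates for $G_\cap$ up to isomorphism. In turn $G^{(2)}$ is recovered from a copy of $G_\cap$ by adjoining the at most $\delta$ edges of $E^{(2)}\setminus E_\cap$, which involve at most $2\delta$ nodes not in $G_\cap$; the number of isomorphism types of the bipartite graph carried by these new edges is $\delta^{O(\delta)}$ by elementary counting (as in Lemma~\ref{lem:upper:bound:N}), and there are at most $(3D)^{2\delta}$ ways to specify to which nodes of $G_\cap$ these new edges attach (there are at most $3D$ such nodes and at most $2\delta$ attaching endpoints). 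Multiplying these three counts and using $\delta\leq|E^{(1)}|+|E^{(2)}|\leq 2D$ gives $N_\delta(G^{(1)})\leq(CD)^{C'\delta}$.

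Finally I would assemble the estimates. Choosing $c_{\texttt{s}}$ large enough — depending only on the absolute constants $C,C'$ — that $(CD)^{C'}\leq D^{c_{\texttt{s}}/2}$ for all $D\geq 2$, one gets $N_\delta(G^{(1)})\cdot 2D^{-c_{\texttt{s}}\delta}\leq 2D^{-c_{\texttt{s}}\delta/2}$ for every $\delta\geq 1$, hence
\[
\sum_{G^{(2)}\in\mathcal G^{\star}_{\leq D}}\big|(\Gamma-\mathrm{Id})_{G^{(1)},G^{(2)}}\big|\ \leq\ 2D^{-c_{\texttt{s}}}+\sum_{\delta\geq 1}2D^{-c_{\texttt{s}}\delta/2}\ \leq\ 2D^{-c_{\texttt{s}}}+\frac{2D^{-c_{\texttt{s}}/2}}{1-D^{-c_{\texttt{s}}/2}}\ \leq\ 4D^{-c_{\texttt{s}}/2}\enspace ,
\]
where the last step uses $D\geq 2$ and $c_{\texttt{s}}\geq 10$ (so $D^{-c_{\texttt{s}}/2}\leq 1/32$ and $2D^{-c_{\texttt{s}}}\leq D^{-c_{\texttt{s}}/2}$). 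Taking the maximum over $G^{(1)}$ then gives $\|\Gamma-\mathrm{Id}\|_{op}\leq 4D^{-c_{\texttt{s}}/2}$, as claimed. The combinatorial bound on $N_\delta(G^{(1)})$ is the one non-routine point; the remaining manipulations are the standard Gershgorin-plus-geometric-series bookkeeping, essentially as carried out in~\cite{CGGV25}.
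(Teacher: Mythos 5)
Your proposal follows essentially the same route as the paper's proof: pass to the submatrix indexed by $\mathcal G^{\star}_{\leq D}$ after noting the $\emptyset$ row and column vanish, bound $\|\Gamma-\mathrm{Id}\|_{op}$ by the maximum $\ell_1$ row norm, control each entry by $2D^{-c_{\texttt{s}}(d(G^{(1)},G^{(2)})\lor 1)}$ via Proposition~\ref{prop:scalprod}, group templates by edit distance, and sum a geometric series after absorbing the counting factor into a slightly smaller power of $D$. The only point at which you diverge is the counting bound $N_\delta(G^{(1)})$: the paper simply asserts, without argument, that the number of templates at edit distance $u$ from a fixed $G^{(1)}$ is at most $(u+2D+1)^{2u}$ and then uses $u\leq 2D$ to crush it into $D^{8u}$, whereas you sketch a derivation of a bound of the same functional form $(CD)^{C'\delta}$ by choosing a matching attaining the edit distance, counting the choices of $G_\cap$ as a subgraph of $G^{(1)}$, and then counting how the $\leq\delta$ extra edges of $G^{(2)}$ can attach to $G_\cap$. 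Your derivation, while informal in spots (e.g.\ the appeal to a bound of type Lemma~\ref{lem:upper:bound:N} for graphs that need not be connected, and the $3D$/$(3D)^{2\delta}$ bookkeeping being generous but not fully spelled out), correctly identifies the one non-routine ingredient and produces a bound of the right shape; since the final absorption argument only needs $N_\delta\leq (CD)^{C'\delta}$ with absolute $C,C'$ and $c_{\texttt{s}}$ taken large enough, both versions close the same way. In short, the proof is correct and matches the paper's up to the level of detail offered for the combinatorial counting step.
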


\end{proof}

\begin{proof}[Proof of Proposition~\ref{eq:boundbar}]

\noindent
{\bf Proof of 1):} If $\mathbf M \not\in \mathcal M^\star$, then there exists one connected component $G'$ belonging to either $G^{(1)}$ or $G^{(2)}$ such that none of the nodes of $G'$ is matched in $\mathbf M$. Assume w.l.o.g.~that this connected component is in $G^{(1)}$. By definition of the polynomials that $\overline P^{\star}_{G^{(1)}, \sigma^{(1)}} = \overline P^{\star}_{G', \sigma^{(1)}} \overline P^{\star}_{\underline{G}^{(1)}, \sigma^{(1)}}$ where $\underline{G}^{(1)}$ is the complement of $G'$ in $G^{(1)}$. Then, Identity~\eqref{eq:indep} implies that 
$$\mathbb E[\overline{P}^{\star}_{G^{(1)}, \sigma^{(1)}}\overline{P}^{\star}_{G^{(2)},\sigma^{(2)}}] = \mathbb E[\overline{P}^{\star}_{\underline{G}^{(1)}, \sigma^{(1)}}\overline{P}^{\star}_{G^{(2)},\sigma^{(2)}}] \mathbb E[\overline{P}^{\star}_{G', \sigma^{(1)}}] = 0\enspace ,$$
as $\mathbb E[\overline{P}_{G', \sigma^{(1)}}]=0$.

\noindent
{\bf Proof of 2):} Write $G^{(1)} = (G_1^{(1)}, \ldots, G^{(1)}_{\mathrm{cc}_{G^{(1)} }})$, and $G^{(2)} = (G_1^{(2)}, \ldots, G^{(2)}_{\mathrm{cc}_{G^{(2)} }})$ for the decomposition of $G^{(1)},G^{(2)}$ into their respective $\mathrm{cc}_{G^{(1)} },\mathrm{cc}_{G^{(2)} }$ connected components that contain at least two nodes. We have
\begin{align*}
    \mathbb{E}\left[\overline{P}^{\star}_{G^{(1)}, \sigma^{(1)}_r, \sigma^{(1)}_c}\overline{P}^{\star}_{G^{(2)}, \sigma^{(2)}_r, \sigma^{(2)}_c}\right] & = \sum_{S_1 \subset [\mathrm{cc}_{G^{(1)} }], S_2 \subset [\mathrm{cc}_{G^{(2)} }]} (-1)^{|S_1|+|S_2|}\mathbb E\left[\prod_{i\in [\mathrm{cc}_{G^{(1)} }]\setminus S_1} P_{G_i^{(1)}, \sigma^{(1)}_r, \sigma^{(1)}_c}\prod_{i\in [\mathrm{cc}_{G^{(2)} }]\setminus S_2} P_{G_i^{(2)}, \sigma^{(2)}_r, \sigma^{(2)}_c}\right]\\ 
    & \quad\quad \quad  \times  \prod_{i\in S_1}\mathbb E\left[ P_{G_i^{(1)}, \sigma^{(1)}_r, \sigma^{(1)}_c}\right]\prod_{i\in  S_2}\mathbb E\left[ P_{G_i^{(2)}, \sigma^{(2)}_r, \sigma^{(2)}_c}\right]\enspace . 
\end{align*}
Since $\mathbf M\in \mathcal M^\star$, at least one node of each connected components of $G_i^{(1)}$ (resp. $G_i^{(2)}$) for $i\in \mathrm{cc}_{G^{(1)}}$ (resp. $\mathrm{cc}_{G^{(2)}}$) intersects $\mathbf{M}$. Hence, we deduce from Equation~\eqref{eqn:old basis proof ingredients} that 
\begin{align*}
0 \leq & \mathbb E\left[\prod_{i\in [\mathrm{cc}_{G^{(1)} }]\setminus S_1} P_{G_i^{(1)}, \sigma^{(1)}_r, \sigma^{(1)}_c}\prod_{i\in [\mathrm{cc}_{G^{(2)} }]\setminus S_2} P_{G_i^{(2)}, \sigma^{(2)}_r, \sigma^{(2)}_c}\right]
 \prod_{i\in S_1}\mathbb E\left[ P_{G_i^{(1)}, \sigma^{(1)}_r, \sigma^{(1)}_c}\right]\prod_{i\in  S_2}\mathbb E\left[ P_{G_i^{(2)}, \sigma^{(2)}_r, \sigma^{(2)}_c}\right] \\ & \hspace{3cm}\leq \left(\frac{K_n}{n} \land \frac{K_d}{d}\right)^{|S_1| \lor |S_2|}\mathbb{E}\left[P_{G^{(1)}, \sigma^{(1)}}P_{G^{(2)},\sigma^{(2)}}\right]\enspace . 
\end{align*}
Since $\mathrm{cc}_{G^{(1)}}\vee  \mathrm{cc}_{G^{(2)}}\leq D$, this leads us to
\begin{align*}
    \frac{\left|\mathbb{E}\left[\overline{P}_{G^{(1)}, \sigma^{(1)}}\overline{P}_{G^{(2)},\sigma^{(2)}}\right] - \mathbb{E}\left[P_{G^{(1)}, \sigma^{(1)}}P_{G^{(2)},\sigma^{(2)}}\right]\right|}{\mathbb{E}\left[P_{G^{(1)}, \sigma^{(1)}}P_{G^{(2)},\sigma^{(2)}}\right]} 
	&\leq  \sum_{S_1 \subset [\mathrm{cc}_{G^{(1)} }], S_2 \subset [\mathrm{cc}_{G^{(2)} }]: |S_1|\lor |S_2| \geq 1} \left(\frac{K_n}{n} \land \frac{K_d}{d}\right)^{|S_1| \lor |S_2|}\\
    &\leq \sum_{k=1}^{\infty}\left[2D\left(\frac{K_n}{n}\wedge \frac{K_d}{d}\right)\right]^k \\
    &\leq 4D \left(\frac{K_n}{n}\wedge \frac{K_d}{d}\right)\enspace ,
\end{align*}
since $4D  \left(\frac{K_n}{n}\wedge \frac{K_d}{d}\right) \leq 1$ by Condition~\eqref{eq:signal1}. 
\end{proof}

\begin{proof}[Proof of Proposition~\ref{prop:scalprod}]
By definition of the polynomials, we have 
\begin{align*}
     \mathbb E\left[\Psi^{\star}_{G^{(1)}}\Psi^{\star}_{G^{(2)}}\right]  &= \sum_{\mathbf{M}\in\mathcal{M}}\sum_{(\sigma^{(1)},\sigma^{(2)})\in \GS(\mathbf{M})}\frac{1}{\sqrt{\mathbb{V}^{\star}(G^{(1)})\mathbb{V}^{\star}(G^{(2)})}}\mathbb{E}\left[\overline{P}^{\star}_{G^{(1)}, \sigma^{(1)}}\overline{P}^{\star}_{G^{(2)},\sigma^{(2)}}\right]\\
     &= \sum_{\mathbf{M}\in\mathcal{M}^{\star}}\sum_{(\sigma^{(1)},\sigma^{(2)})\in \GS(\mathbf{M})}\frac{1}{\sqrt{\mathbb{V}^{\star}(G^{(1)})\mathbb{V}^{\star}(G^{(2)})}}\mathbb{E}\left[\overline{P}^{\star}_{G^{(1)}, \sigma^{(1)}}\overline{P}^{\star}_{G^{(2)},\sigma^{(2)}}\right],
    \end{align*}
where the second line follows from Proposition~\ref{eq:boundbar}. 

\noindent
{\bf Step 1: Decomposition of the cross-moment over $\mathcal M^* \setminus \mathcal M_{\mathrm{PM}}$ and $\mathcal M_{\mathrm{PM}}$.} The collections $\mathcal{M}_{\mathrm{PM}}$ is non-empty only if $G^{(1)} = G^{(2)}$. And if $G^{(1)} = G^{(2)}$, we have for any $ \sigma \in \GS^{\star}_{V^{(1)}}\times \GS_{W^{(1)}}$
$$\sum_{\mathbf{M}\in\mathcal{M}_{\mathrm{PM}}}\sum_{(\sigma^{(1)},\sigma^{(2)})\in \GS(\mathbf{M})}\frac{\mathbb{E}\left[\overline{P}^{\star}_{G^{(1)}, \sigma^{(1)}}\overline{P}^{\star}_{G^{(2)},\sigma^{(2)}}\right]}{\sqrt{\mathbb{V}^{\star}(G^{(1)})\mathbb{V}^{\star}(G^{(2)})}} =  \frac{(n-1)!d!|\mathrm{Aut}^{\star}(G^{(1)})|\mathbb{E}\left[\left(\overline{P}^{\star}_{G^{(1)}, \sigma}\right)^2\right]}{\left(n-|V^{(1)}|\right)!\left(d-|W^{(1)}|\right)!\mathbb{V}^{\star}(G^{(1)})}  =  \mathbb E\left[\overline{P}_{G^{(1)}, \sigma}^{\star2}\right],$$
since  
\begin{align}\label{eq:boubou}
    |\mathcal{M}_{\mathrm{PM}}| =|\mathrm{Aut}^\star(G^{(1)})|~~~~\mathrm{and}~~~~|\GS(\mathbf{M})| = \frac{(n-1)!d!}{\left(n-(|V^{(1)}|+ |V^{(2)}| - |\mathbf M_r|)\right)!\left(d-(|W^{(1)}|+ |W^{(2)}| - |\mathbf M_c|)\right)!},
\end{align}
and by definition of $\mathbb V(G^{(1)})$. Since $P_{G^{(1)}, \sigma}^{2}=1$ almost surely, it follows from Proposition~\ref{eq:boundbar} that 
$$\left|  \mathbb E[\overline{P}_{G^{(1)}, \sigma}^{\star2}]- 1\right| \leq 4D\left(\frac{K_n}{n}\land \frac{K_d}{d}\right) \ ,$$
so that
\begin{align} \nonumber
    \left|\mathbb E\left[\Psi^{\star}_{G^{(1)}}\Psi^{\star}_{G^{(2)}}\right] - \mathbf 1\{G^{(1)} = G^{(2)}\} \right|&\leq  \left|\sum_{\mathbf{M}\in\mathcal{M}^{\star}\setminus \mathcal M_{\mathrm{PM}}}\sum_{(\sigma^{(1)},\sigma^{(2)})\in \GS(\mathbf{M})}\frac{\mathbb{E}\left[\overline{P}^{\star}_{G^{(1)}, \sigma^{(1)}}\overline{P}^{\star}_{G^{(2)},\sigma^{(2)}}\right]}{\sqrt{\mathbb{V}^{\star}(G^{(1)})\mathbb{V}^{\star}(G^{(2)})}}\right|  + 4D \left(\frac{K_n}{n}\land \frac{K_d}{d}\right)\mathbf 1\{G^{(1)} = G^{(2)}\}\\
   &\quad \quad   := A  + 4D\left(\frac{K_n}{n}\land \frac{K_d}{d}\right)\mathbf 1\{G^{(1)} = G^{(2)}\}\enspace . \label{eq:general:bound:psi}
    \end{align}

\noindent
{\bf Step  2: Making $A$ explicit as a sum of $A(\mathbf M)$.} Observe that, for any $\mathbf M \in \mathcal M$, $\mathbb{E}\left[\overline{P}^{\star}_{G^{(1)}, \sigma^{(1)}}\overline{P}^{\star}_{G^{(2)}, \sigma^{(2)}}\right]$ does not depend on the choice of  $(\sigma^{(1)}, \sigma^{(2)})\in \GS(\mathbf{M})$. Here, we write $E(\mathbf M)$ for this value.  
So that by Equation~\eqref{eq:boubou}
\begin{align}
    A&= \left|\frac{1}{\sqrt{\mathbb{V}^{\star}(G^{(1)})\mathbb{V}^{\star}(G^{(2)})}}\sum_{\mathbf{M}\in\mathcal{M}^{\star}\setminus \mathcal{M}_{\mathrm{PM}}}\sum_{(\sigma^{(1)},\sigma^{(2)})\in \GS(\mathbf{M})}E(\mathbf M)\right|\nonumber\\
    &= \left|\frac{1}{\sqrt{\mathbb{V}^{\star}(G^{(1)})\mathbb{V}^{\star}(G^{(2)})}}\sum_{\mathbf{M}\in\mathcal{M}^{\star}\setminus \mathcal{M}_{\mathrm{PM}}}\frac{(n-1)!}{\left(n-(|V^{(1)}|+ |V^{(2)}| - |\mathbf M_r|)\right)!}\frac{d!}{\left(d-(|W^{(1)}|+ |W^{(2)}| - |\mathbf M_c|)\right)!}E(\mathbf M)\right|.\nonumber
    \end{align}
    By Proposition~\ref{eq:boundbar} and Equation~\eqref{eqn:old basis proof ingredients} we have
    $$|E(\mathbf M)| \leq \lambda^{|E_{\Delta}|}\left(\frac{K_n}{n}\right)^{|V_{\Delta}|}\left(\frac{K_d}{d}\right)^{|W_{\Delta}|} 
    \left(1+4D\left(\frac{K_n}{n}\land \frac{K_d}{d}\right)\right)\leq 2 \lambda^{|E_{\Delta}|}\left(\frac{K_n}{n}\right)^{|V_{\Delta}|}\left(\frac{K_d}{d}\right)^{|W_{\Delta}|} \enspace ,$$
where we rely again on Condition~\eqref{eq:signal1}. By definition, we have
\[
\frac{(n-1)!}{\left(n-(|V^{(1)}|+ |V^{(2)}| - |\mathbf M_r|)\right)!} \frac{\sqrt{(n-|V^{(1)|})!(n-|V^{(2)|})!}}{(n-1)!}\leq n^{\frac{|U_r^{(1)}| + |U_r^{(2)}|}{2}}\ ,
\]
where the numbers  $|U_r^{(1)}|$ and $|U_r^{(2)}|$ of unmatched nodes do not depend on the choice of  $(\sigma^{(1)}, \sigma^{(2)})\in \GS(\mathbf{M})$. Arguing similarly for the terms in $d$, we then deduce from the 
definition of $\mathbb V(G)$ that 
    \begin{align} \nonumber
    A\leq & \frac{2}{\sqrt{\left|\mathrm{Aut}^{\star}(G^{(1)})\right|\left|\mathrm{Aut}^{\star}(G^{(2)})\right|}}\sum_{\mathbf{M}\in\mathcal{M}^{\star}\setminus \mathcal{M}_{\mathrm{PM}}}n^{\frac{|U^{(1)}_r| + |U^{(2)}_r|}{2}}d^{\frac{|U^{(1)}_c| + |U^{(2)}_c|}{2}}\lambda^{|E_{\Delta}|}\left(\frac{K_n}{n}\right)^{|V_{\Delta}|}\left(\frac{K_d}{d}\right)^{|W_{\Delta}|} \nonumber\\ \nonumber
        \leq & \frac{2}{\sqrt{\left|\mathrm{Aut}^{\star}(G^{(1)})\right|\left|\mathrm{Aut}^{\star}(G^{(2)})\right|}}\sum_{\mathbf{M}\in\mathcal{M}^{\star}\setminus \mathcal{M}_{\mathrm{PM}}}\left(\frac{\lambda K_n}{\sqrt{n}}\right)^{|U^{(1)}_r| + |U^{(2)}_r|}\left(\frac{\lambda K_d}{\sqrt{d}}\right)^{|U^{(1)}_c| + |U^{(2)}_c|}\\
		&\hspace{3cm}\quad \quad \times \lambda^{|E_{\Delta}| - |U^{(1)}_r| -|U^{(1)}_c| -|U^{(2)}_r|- |U^{(2)}_c|}\left(\frac{K_n}{n}\right)^{|V_{\Delta}| - |U^{(1)}_r| - |U^{(2)}_r|}\left(\frac{K_d}{d}\right)^{|W_{\Delta}| - |U^{(1)}_c| - |U^{(2)}_c|}\enspace ,\label{eq:A(M)}
\end{align}
where we  rearranged terms in the last line. Write $A(\mathbf M)$ for the summand in the last line.
    

\medskip 

\noindent
{\bf Step 3: Grouping matching $\mathbf M$ sharing the same shadow.} Recall the definition of shadows in Section~\ref{sec:additional:graph:notations}. Since $A(\mathbf{M})$ only depends on the number of edges and nodes of the symmetric difference graphs and on the number of unmatched nodes, two matchings $\mathbf{M}$ and $\mathbf{M}'$ that share the same shadow satisfy $A(\mathbf{M})=A(\mathbf{M}')$. Regrouping the sum, we arrive at  
\begin{align*}
       A &\leq  \frac{2}{\sqrt{|\mathrm{Aut}^{\star}(G^{(1)})| |\mathrm{Aut}^{\star}(G^{(2)})|}} 
       \sum_{\substack{\bar U^{(1)} \subset V^{(1)}\times W^{(1)}\\ \bar U^{(2)} \subset V^{(2)}\times W^{(2)}, \\ \underline{\mathbf M} \in \mathcal M\setminus \mathcal M_{\mathrm{PM}}}}
     \quad \quad   \sum_{\mathbf M \in \mathcal M_{\mathrm{shadow}}(\bar U^{(1)},\bar U^{(2)}, \underline{\mathbf M})} A(\mathbf M)\enspace .
\end{align*}
Now, we have to bound the cardinality $| \mathcal M_{\mathrm{shadow}}(\bar U^{(1)},\bar U^{(2)}, \underline{\mathbf M})|$. 
\begin{lemma}\label{lem:control:shadow}
    For any $\bar U^{(1)}$, $\bar U^{(2)}$, and any $\underline{\mathbf M}$, we have 
    $$|\mathcal M_{\mathrm{shadow}}(\bar U^{(1)}, \bar U^{(2)}, \underline{\mathbf M} )| \leq \min(|\mathrm{Aut}^{\star}(G^{(1)})|, |\mathrm{Aut}^{\star}(G^{(2)}))|\enspace.$$    
\end{lemma}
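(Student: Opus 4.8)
\textbf{Plan of the proof of Lemma~\ref{lem:control:shadow}.} The statement counts, for a fixed shadow $(\bar U^{(1)}, \bar U^{(2)}, \underline{\mathbf M})$, how many matchings $\mathbf M$ realise it. The first observation is that the shadow already fixes everything about $\mathbf M$ except for the matching of the \emph{perfectly matched} nodes: it prescribes which nodes of $G^{(1)}$ and $G^{(2)}$ are unmatched (namely $\bar U^{(1)}$ and $\bar U^{(2)}$), and it prescribes the set $\underline{\mathbf M}$ of semi-matched node pairs together with how they are matched. The only remaining freedom is how to pair up the nodes of $V^{(1)}\setminus(\bar U^{(1)}_r \cup (\underline{\mathbf M}_r)_{1})$ with those of $V^{(2)}\setminus(\bar U^{(2)}_r \cup (\underline{\mathbf M}_r)_{2})$ (and similarly on the $W$ side), subject to the hard constraint that every such pair be \emph{perfectly matched} — i.e.\ every edge incident to one of these nodes must be matched to an edge incident to its partner.

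The key step is to show that each admissible completion of the shadow is in injective correspondence with an element of $\mathrm{Aut}^{\star}(G^{(1)})$ (and, by the symmetric argument, with an element of $\mathrm{Aut}^{\star}(G^{(2)})$), which immediately gives the bound by the minimum of the two automorphism-group cardinalities. Here is how I would set up the injection. Fix one reference completion $\mathbf M_0$ of the shadow (if none exists the lemma is vacuous). Given any other completion $\mathbf M$, the composition ``read a node in $G^{(1)}$, follow $\mathbf M$ into $G^{(2)}$, then follow $\mathbf M_0^{-1}$ back into $G^{(1)}$'' is a bijection of the node set of $G^{(1)}$ to itself; one checks it fixes $v_1^{(1)}$ (because $\mathcal M$ always matches $v_1^{(1)}$ with $v_1^{(2)}$) and that it preserves the edge set of $G^{(1)}$ — this last point is precisely where the perfect-matching constraint is used: on the perfectly matched nodes both $\mathbf M$ and $\mathbf M_0$ send the full set of incident edges of $G^{(1)}$ bijectively onto the full set of incident edges of $G^{(2)}$, so the round trip preserves incidences; on the semi-matched nodes the map is the identity since $\underline{\mathbf M}$ is fixed by the shadow; and the unmatched nodes do not move either since $\bar U^{(1)}$ is fixed. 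Hence the round-trip map lies in $\mathrm{Aut}^{\star}(G^{(1)})$, and distinct completions $\mathbf M \neq \mathbf M'$ give distinct automorphisms (as they differ on how some perfectly matched node of $G^{(1)}$ is sent into $G^{(2)}$, hence differ after composing with the fixed $\mathbf M_0^{-1}$). This yields $|\mathcal M_{\mathrm{shadow}}(\bar U^{(1)}, \bar U^{(2)}, \underline{\mathbf M})| \leq |\mathrm{Aut}^{\star}(G^{(1)})|$, and running the same argument with the roles of $G^{(1)}$ and $G^{(2)}$ swapped gives the bound with $|\mathrm{Aut}^{\star}(G^{(2)})|$; taking the minimum finishes the proof.

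I expect the main obstacle to be the careful bookkeeping in verifying that the round-trip map is a well-defined graph automorphism, in particular checking that it is globally consistent (a node of $G^{(1)}$ that is perfectly matched under $\mathbf M$ need not be perfectly matched under $\mathbf M_0$ at first sight) — but here one uses that the shadow fixes $\mathbf M_{\mathrm{SM}}$, so the set of perfectly matched nodes is the same for $\mathbf M$ and $\mathbf M_0$, and on that common set both are perfect matchings in the required sense. A secondary subtlety is the treatment of the distinguished node $v_1$ and of potentially isolated nodes, which is exactly why one works with $\mathrm{Aut}^{\star}$ (automorphisms fixing $v_1$) rather than $\mathrm{Aut}$; this is handled by the same convention already used in the definitions of $\GS^{\star}_V$ and $\mathcal M^{\star}$ earlier in the section. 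Beyond that, the argument is combinatorial and, as indicated, follows the template of the analogous lemma in~\cite{CGGV25}, the only change being the bipartite (non-symmetric) structure, which merely means carrying the $V$-side and $W$-side matchings separately.
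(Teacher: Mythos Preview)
Your proposal is correct and is essentially the standard argument that the paper defers to (the paper's own ``proof'' is a one-line reference to Lemma~A.5 in~\cite{CGGV25}). The injection $\mathbf M \mapsto \mathbf M_0^{-1}\circ \mathbf M$ into $\mathrm{Aut}^{\star}(G^{(1)})$, extended by the identity on the non-perfectly-matched nodes, is exactly the mechanism at work there; your identification of the key consistency check---that the shadow fixes $\mathbf M_{\mathrm{SM}}$, hence the set of perfectly matched nodes is the same for $\mathbf M$ and $\mathbf M_0$ on \emph{both} sides---is precisely what makes the round-trip a well-defined edge-preserving bijection.
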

  It readily follows from this lemma that
\begin{align}
       A &\leq  2 \sum_{\substack{\bar U^{(1)} \subset V^{(1)}\times W^{(1)}\\ \bar U^{(2)} \subset V^{(2)}\times W^{(2)}, \\ \underline{\mathbf M} \in \mathcal M\setminus \mathcal M_{\mathrm{PM}}}} A(\mathbf M )\enspace , \label{eqn:sum for proof sketch}
\end{align}
where $\mathbf M$ is any element of $ \mathcal M_{\mathrm{shadow}}(\bar U^{(1)},\bar U^{(2)}, \underline{\mathbf M})$.

\noindent
{\bf Step 4: Bounding $A(\mathbf M )$.} Consider any $\mathbf{M}\in  \mathcal M^*\setminus \mathcal M_{\mathrm{PM}}$ and any labeling. As $\mathbf{M}\in  \mathcal M^*$, any connected component of $G_{\Delta}$ intersects a semi-matched node, that is $|\mathbf{M}_{\mathrm{SM}}(\mathbf M_r)|+ |\mathbf{M}_{\mathrm{SM}}(\mathbf M_c)|\geq \mathrm{cc}_{\Delta}$. For any bi-partite graph $G = (V,W,E)$, we have $|E|\geq |V|+|W| - \mathrm{cc}_G$. Hence it follows that 
\[
|E_{\Delta}|\geq |V_{\Delta}|+ |W_{\Delta}|- |\mathbf{M}_{\mathrm{SM}}(\mathbf M_r)|- |\mathbf{M}_{\mathrm{SM}}(\mathbf M_c)|\enspace . 
\]
By construction of $G_{\Delta}$ and definition of the unmatched nodes, we have $|\mathbf{M}_{\mathrm{SM}}(\mathbf M_r)| + |U^{(1)}_r| + |U^{(2)}_r| = |V_{\Delta}|$ and $|\mathbf{M}_{\mathrm{SM}}(\mathbf M_c)| + |U^{(1)}_c| + |U^{(2)}_c| = |W_{\Delta}|$. Thus, we get 
\begin{align*}
|E_{\Delta}| - |U^{(1)}| - |U^{(2)}|\geq 0 \enspace . 
\end{align*}
We deduce from this that and from the condition~\eqref{eq:signal1} and from the definition~\eqref{eq:A(M)} of $A(\mathbf M)$ that
\begin{align*}
    A(\mathbf M ) &\leq D^{-8c_{\texttt{s}}(|U^{(1)}| + |U^{(2)}|)} D^{-8c_{\texttt{s}}  [ |E_\Delta| -(|U^{(1)}| + |U^{(2)}|)]} D^{-8c_{\texttt{s}}|\mathbf M_{\mathrm{SM}}|}\leq D^{-8c_{\texttt{s}}[|E_\Delta| + |\mathbf M_{\mathrm{SM}}|]}\\ 
    &\leq D^{-4c_{\texttt{s}}\left[d(G^{(1)}, G^{(2)})\lor 1+|U^{(1)}|+ |U^{(2)}| + |\mathbf M_{\mathrm{SM}}|\right]}\ , 
\end{align*}
since, by definition of the edit distance, $|E_{\Delta}| \geq d(G^{(1)}, G^{(2)})\lor 1$ as long as  $\mathbf M\not\in \mathcal M_{\mathrm{PM}}$.

\noindent
{\bf Step 5: Final bound on $A$.} Plugging this bound on $A(\mathbf M )$ back in Equation~\eqref{eqn:sum for proof sketch} we get
\begin{align}
       A &\leq  2 \sum_{\substack{\bar U^{(1)} \subset V^{(1)}\times W^{(1)}\\ \bar U^{(2)} \subset V^{(2)}\times W^{(2)}, \\ \underline{\mathbf M} \in \mathcal M\setminus \mathcal M_{\mathrm{PM}}}}  D^{-4c_{\texttt{s}}\left[d(G^{(1)}, G^{(2)})\lor 1+|\bar U^{(1)}|+ |\bar U^{(2)}| + |\underline {\mathbf M}|\right]}.
\end{align}
So when we enumerate over all possible sets $U^{(1)}, U^{(2)}, \underline{\mathbf M} $ that have respective cardinality $u_1$, $u_2$, and  $m$, and since these sets have bounded cardinalities resp.~by $(2D)^{u_1}, (2D)^{u_2}$ and $(2D)^{2m}$, we get
\begin{align*}
       A &\leq  2
       \sum_{\substack{u_1, u_2, m \geq 0,\\ u_1+u_2+m\geq 1}}(2D)^{u_1+u_2+2m}  D^{-4c_{\texttt{s}}\left[d(G^{(1)}, G^{(2)})\lor 1+u_1+u_2+m\right]} \leq D^{-c_{\texttt{s}}  (d(G^{(1)},G^{(2)})\lor 1)},
\end{align*}
since $D\geq 2$ and since we assume that  $c_{\texttt{s}} \geq 4$. Together with~\eqref{eq:general:bound:psi}, this concludes the proof. 
\end{proof}

\begin{proof}[Proof of Proposition~\ref{prop:ortho}]
    Recall that $\Gamma_{1,G}=0$ for any $G\in \mathcal G^\star_{\leq D}$, we only have to focus on the submatrix of $\Gamma$ indexed by $\mathcal G^\star_{\leq D}$. Since the operator norm of a symmetric matrix is bounded by the maximum $\ell_1$ norm of its rows, we have 
\[
\|\Gamma - \mathrm{Id}\|_{op}\leq \max_{G^{(1)}}\left\{\left|\Gamma_{G^{(1)}, G^{(1)}}-1\right|+  \sum_{G^{(2)} \in \mathcal G^\star_{\leq D}, G^{(2)} \neq G^{(1)} }\left|\Gamma_{G^{(1)}, G^{(2)}}\right|\right\} \enspace . 
\] 
To bound the latter sum, we use that for a fixed template $G^{(1)}$, the number of templates $G^{(2)}$ such that $d( G^{(1)}, G^{(2)}) = u$ is bounded by $(u+2D+1)^{2u}$. Also, if $G^{(2)} \neq G^{(1)}$, then $d(G^{(1)}, G^{(2)})\geq 1$. It then follows from Proposition~\ref{prop:scalprod} that
\begin{align*}
    \sum_{G^{(2)} \in \mathcal G_{\leq D}, G^{(2)} \neq G^{(1)} }\left|\Gamma_{G^{(1)}, G^{(2)}}\right|
    &\leq \sum_{G^{(2)} \in \mathcal G_{\leq D}, G^{(2)} \neq G^{(1)}} 2D^{-c_{\texttt{s}}d(G^{(1)},G^{(2)})}
    \leq \sum_{2D \geq  u \geq 1} |\{G^{(2)}: d( G^{(1)}, G^{(2)}) = u \}|  2D^{-c_{\texttt{s}}u}\enspace\\
    &\leq \sum_{2D \geq u\geq 1} 2(u+2D+1)^{2u}  D^{-c_{\texttt{s}}u}\enspace
    \leq \sum_{2D \geq u\geq 1} 2D^{-(c_{\texttt{s}}-8)u} \leq  2D^{-c_{\texttt{s}}/2}\enspace ,
\end{align*}
since $D \geq 2$ provided we have $c_{\texttt{s}} \geq 18$. 
Using Proposition~\ref{prop:scalprod}, to bound $\left|\Gamma_{G^{(1)}, G^{(1)}}-1\right|$, we conclude the proof.
\end{proof}

\begin{proof}[Proof of Lemma~\ref{lem:control:shadow}]
This proof if analogous to that of Lemma A.5 in~\cite{CGGV25}.
\end{proof}

\section{Appendix}\label{sec:proof:thm:UBtest}

This section gathers the proofs of the upper bounds as well the information lower bounds for hidden submatrix problems. Variants of these proofs already appeared in the literature --see e.g.~\cite{butucea2013detection,butucea2015sharp}-- and we mostly provide them for the sake of completeness.

\subsection{Proof of Proposition~\ref{thm:UBtest}}

    Note first that for $f$ taking values in $\{0,1\}$:
\begin{align}\label{eq:genloss}
\mathrm{if}~~M\neq M_0~~~\E_M[(f-x_0)^2] =1 - \mathbb E_M f,~~~~~\mathrm{and~otherwise}~~~~~\E_{M_0}[(f-x_0)^2] = \mathbb E_{M_0} f.
\end{align}

    We will prove the theorem by proving the three items one by one. In this proof write $\overline{\cH} = \overline{\cH}(\lambda,K_n,K_d)$.
    
    \noindent
    {\bf Proof of item 1.} We have by Hoeffding's inequality that for any $M \in \overline{\cH}$:
    \begin{align*}
    \mathbb P_{M}\left(\sum_{i,k} Y_{i,k} - K_nK_d\lambda \geq -\sqrt{2nd\log(1/\delta)} \right) \geq 1-\delta,
    \end{align*}
    and
    \begin{align*}
        \mathbb P_{M_0}\left(\sum_{i,k} Y_{i,k} < \sqrt{2nd\log(1/\delta)} \right) \geq 1-\delta.
    \end{align*}
    So that under the signal condition of item 1:
    $$\sup_{M \in \overline{\cH}}\mathbb P_{M}(f_{\mathrm{GS}} = 0) \lor \mathbb P_{M_0}(f_{\mathrm{GS}} = 1) \leq \delta.$$
    This concludes the proof by Equation~\eqref{eq:genloss}.

    \noindent
    {\bf Proof of item 2.} We have by Hoeffding's inequality and an union bound that for any $M \in \overline{\cH}$:
    \begin{align*}
    \mathbb P_{M}\left(\sup_k \sum_{i} Y_{i,k} - K_n\lambda \geq -\sqrt{2n\log(d/\delta)} \right) \geq 1-\delta,
    \end{align*}
    and
    \begin{align*}
        \mathbb P_{M_0}\left(\sup_k \sum_{i} Y_{i,k} < \sqrt{2n\log(d/\delta)} \right) \geq 1-\delta.
    \end{align*}
    So that under the signal condition of item 1:
    $$\sup_{M \in \overline{\cH}}\mathbb P_{M}(f_{\mathrm{cs}} = 0) \lor \mathbb P_{M_0}(f_{\mathrm{cs}} = 1) \leq \delta.$$
    This concludes the proof by Equation~\eqref{eq:genloss} for $f_{\mathrm{cs}}$. We do it similarly for $f_{\mathrm{rs}}$.

        \noindent
    {\bf Proof of item 3.} We have by Hoeffding's inequality and an union bound that for any $M \in \overline{\cH}$ and $m \leq K_n \land K_d$:
    \begin{align*}
    \mathbb P_{M}\left(\sup_{S_n\subset [n], S_d\subset [d]: |S_n|\lor |S_d| \leq m}\sum_{i\in S_n, k\in S_d} Y_{i,k} - m^2 \lambda \geq -m\sqrt{2m\log(nd/\delta)} \right) \geq 1-\delta,
    \end{align*}
    and
    \begin{align*}
        \mathbb P_{M_0}\left(\sup_{S_n\subset [n], S_d\subset [d]: |S_n|\lor |S_d| \leq m}\sum_{i\in S_n, k\in S_d} Y_{i,k} \leq m\sqrt{2m\log(nd/\delta)} \right) \geq 1-\delta.
    \end{align*}
    So that under the signal condition of item 1:
    $$\sup_{M \in \overline{\cH}}\mathbb P_{M}(f_{\mathrm{SS}}^{(m)} = 0) \lor \mathbb P_{M_0}(f_{\mathrm{SS}}^{(m)} = 1) \leq \delta.$$
    This concludes the proof by Equation~\eqref{eq:genloss}.

\subsection{Proof of Proposition~\ref{prop:ubestM}}
    Note first that for $f$ taking values in $\{0,1\}$, if $M\in \cH$:
\begin{align}\label{eq:genloss2}
\mathrm{if}~~M_{1,k} \geq 0 \ , \forall k~~~\E_M[(f-x^{\star})^2] =1 - \mathbb E_M f,~~~~~\mathrm{and~otherwise}~~~~~\E_M[(f-x^{\star})^2] = \mathbb E_{M} f.
\end{align}

    We will prove the theorem by proving the three items one by one. Write $S^*_E$ for the set of experts $i$ such that $\exists k: M_{i,k} >0$, and $S^*_Q$ for the set of questions $k$ such that $\exists i: M_{i,k} >0$.
    
    \noindent
    {\bf Proof of item 1.} For any $M \in \cH$, with probability higher than $1-\delta$, we get
    \begin{align*}
   \forall i\in S^*_E, \sum_{k} Y_{i,k} - K_d\lambda  \geq -\sqrt{2d\log(n/\delta)} \ ;  \forall i\notin S^*_E, \sum_{k} Y_{i,k} - K_d \leq \sqrt{2n\log(d/\delta)} \ . 
    \end{align*}
    In particular, under the signal conditions of item 1, we have $\mathbb P_{M}\left(f_{SE} = x^\star \right) \geq 1-\delta$. 
        This concludes the proof by Equation~\eqref{eq:genloss2}, by definition of the estimator $f_{SE}$.

        \noindent
    {\bf Proof of item 2.} We have similarly to item 1 that under the signal conditions of item 2:
    $$\mathbb P_{M}\left(\hat S_Q = S_Q^* \right) \geq 1-\delta.$$
    Bby Hoeffding's inequality, we have
      \begin{align*}
    \mathbb P_{M}\left(\sum_{k\in S_Q^*} Y_{1,k} - K_d\lambda \mathbf 1\{i \in S^*_E\} \geq -\sqrt{2K_d\log(d/\delta)} \right) \geq 1-\delta \ ; \mathbb P_{M}\left(\sum_{k\in S_Q^*} Y_{1,k} - K_d\lambda \mathbf 1\{i \in S^*_E\} \leq -\sqrt{2K_d\log(d/\delta)} \right) \geq 1-\delta\ . 
    \end{align*}
    So that under the signal conditions of item 2, we have by a union bound
    $$\mathbb P_{M}\left(f_{SE,2} = x^{\star} \right) \geq 1-2\delta.$$
        This concludes the proof by Equation~\eqref{eq:genloss2}, by definition of the estimator $f_{SE,2}$.

                \noindent
    {\bf Proof of item 3.} Again, we apply Hoeffding's inequality. Fix any $M \in \cH$ and $m \leq (K_n-1) \land K_d$. With probability higher than $1-\delta$, we have 
    \begin{align*}
    \max_{\substack{S_n\subset [2, n], S_d\subset [d],\\ |S_n|\lor |S_d| \leq m\\
    |S_n \cap S_Q^*|\leq m/2 
    }}\sum_{i\in S_n, k\in S_d} Y_{i,k} - |S_n \cap S_E^*||S_d \cap S_Q^*|\lambda  \leq -m\sqrt{2m\log(nd/\delta)} \\
    \min_{\substack{S_n\subset [2, n], S_d\subset [d],\\ |S_n|\lor |S_d| \leq m\\
    |S_n \cap S_Q^*|\geq m/2 
    }}\sum_{i\in S_n, k\in S_d} Y_{i,k} - |S_n \cap S_E^*||S_d \cap S_Q^*|\lambda  \geq -m\sqrt{2m\log(nd/\delta)} \ . 
    \end{align*}
    So that under the signal conditions of item 2, we have  $\mathbb P_{M}\left( |\hat S_{Q}^{(m)} \cap S_Q^*| \geq m/2\right) \geq 1-\delta$. 
    Since $\hat S_{Q}^{(m)}$ is computed without using the first row, we deduce that, with probability higher than $1-2\delta$, 
    \begin{align*}
    \sum_{k\in \hat S_{Q}^{(m)}} Y_{1,k}  \geq m\lambda/2 -\sqrt{2m\log(1/\delta)}  \text{ if }1\in S_E^* \  ; \sum_{k\in \hat S_{Q}^{(m)}} Y_{1,k}  \leq \sqrt{2m\log(1/\delta)} \text{ otherwise }. 
    \end{align*}
    So that under the signal condition of item 3 we have $\mathbb P_{M}\left( f_{SE,2}^{(m)} = x^\star\right) \geq 1-2\delta$. Together  with Equation~\eqref{eq:genloss2}, this concludes the proof.

\subsection{Proof of Proposition~\ref{prop:LBinfestM}.}
The proof mainly follows the steps of generalized Le Cam's method~\cite{tsybakov}.  We consider two cases.

\noindent
{\bf Case 1: $K_d \leq K_n$.} In this case, Condition~\eqref{eq:condition:info:lower:estion} amounts to assuming that $\lambda \sqrt{K_d}\leq \bar c$ for a small constant $\bar c>0$.

We consider the simplest problem where some oracle tells us which questions are such that not all entries are $0$, namely the set $S_d \subset [d]$ of the questions of interest - which is a random subset whose size is random and distributed as $\mathcal B(d,K_d/d)$. In this case, a sufficient statistic for the problem of estimating $x^{\star}$ is the sum for expert $1$ over all questions in $S_d$, namely $Z = \sum_{j \in S_d} Y_{1,j}$. There exists an optimal estimator for $x^{\star}$ that is a measurable function of $Z$. Besides, we have the following conditional distributions. 
\[
\left(\frac{|S_d|+ Z}{2}\Bigg|\left[x^\star=0, S_d\right]\right) \sim \mathcal B(|S_d|,1/2)\text{ and }\left(\frac{|S_d|+ Z}{2}\Bigg|\left[x^\star=1, S_d\right]\right) \sim \mathcal B(|S_d|,(1+\lambda)/2) \ .
\]
We deduce from Pinsker's inequality that  
\begin{align*}
\inf_{f~\mathrm{measurable}}\mathbb P\left(f=1\Bigg|x^{\star}=0, S_d\right) \lor \mathbb P\left(f=0\Bigg|x^{\star}=1, S_d\right) &\geq \frac{1}{2} - \frac{1}{2}\sqrt{\frac{1}{2}\mathrm{KL}(\mathcal  \mathcal B(|S_d|,1/2);B(|S_d|,(1+\lambda)/2))}\\
&\geq \frac{1}{2} - \frac{1}{2}\sqrt{\frac{1}{2}|S_d|\mathrm{KL}( \mathcal B(1/2);\mathcal B((1+\lambda)/2))}\enspace  ,
\end{align*}
by the chain rule. Since $0\leq \lambda \leq 1/2$, we know that $\mathrm{KL}( \mathcal B(1/2);\mathcal B((1+\lambda)/2))= -0.5\log(1-\lambda^2) \leq \lambda^2$. So that
\begin{align*}
\inf_{f~\mathrm{measurable}}\mathbb P\left(f=1\Bigg|x^{\star}=0, S_d\right) \lor \mathbb P\left(f=0\Bigg|x^{\star}=1, S_d\right) \geq \frac{1}{2} - \frac{\lambda\sqrt{|S_d|}}{2^{3/2}}\enspace .
\end{align*}
By definition of this model, we have $\mathbb P(x^{\star}=1|S_d) =K_n/n$. Since $K_n \leq n/2$, we get
\begin{align*}
\inf_{f~\mathrm{measurable}}\mathbb P\left(|f - x^{\star}| \geq 1/2\Bigg|S_d\right) \geq \left(\frac{1}{2} - \frac{\lambda\sqrt{|S_d|}}{2^{3/2}}\right) \frac{K_n}{n}\enspace  .
\end{align*}
Finally, averaging over $S_d$ and reminding that $|S_d| \sim \mathcal B(d,K_d/d)$, and using Jensen's inequality
\begin{align*}
\inf_{f~\mathrm{measurable}}\mathbb P\left(|f - x^{\star}| \geq 1/2\right) &\geq \left(\frac{1}{2} - \frac{\lambda\mathbb E[\sqrt{|S_d|}]}{2^{3/2}}\right) \frac{K_n}{n}  \geq \left(\frac{1}{2} - \frac{\lambda\sqrt{K_d}}{2^{3/2}}\right) \frac{K_n}{n}\enspace .
\end{align*}
Finally since $\mathbb E[x^{\star 2}] = K_n/n$ and since $\lambda\sqrt{K_d}\leq \bar c$ for $\bar c$ small enough, we deduce that 
\begin{align*}
\inf_{f~\mathrm{measurable}}\mathbb E\left((f - x^{\star})^2\right) &\geq \frac{1}{16}\mathbb E[x^{\star 2}]\enspace .
\end{align*}

\noindent
{\bf Case 2: $K_d \geq K_n$.} Condition~\eqref{eq:condition:info:lower:estion} amounts to assuming that $\lambda \sqrt{K_n} \lor \frac{\lambda K_d}{\sqrt{d}}\leq \bar c$ for a small constant $\bar c>0$.

We consider the simplest problem where some oracle tells us which experts are such that not all entries are $0$ except for the first expert of interest, namely the set $S_n' = S_n \setminus \{1\} \subset [d]$ of the experts of interest - which is a random subset whose size is random and distributed as $\mathcal B((n-1),K_n/n)$. In this case, a sufficient statistic for the problem of estimating $x^{\star}$ is the average vector of all experts over  $S_n'$, namely $W = \sum_{i \in S_n'} Y_{i,.}$ and the questions of the first expert, namely $Y' = Y_{1,.}$. There exists an optimal estimator for $x^{\star}$ that is a measurable function of $(W,Y')$. Conditionally to $S_n'$ and $x$, the $(\frac{W_k+|S_n'|}{2},\frac{Y'_k+1}{2})_{k}$ are i.i.d. and follow the mixture distribution 
\[
\left(1-\frac{K_d}{d}\right) \mathcal B(|S_n'|,1/2) \otimes \mathcal B(1,1/2) + \frac{K_d}{d} \mathcal B(|S_n'|, (1+\lambda )/2)\otimes\mathcal B(1, (1+\lambda x^\star)/2) \ . 
\]
We define $f_1$ (resp. $f_0$) for the conditional density of $(\frac{W_k+|S_n'|}{2},\frac{Y'_k+1}{2})$ given $S'_n$ and $x^*=1$ (resp. $x^*=0$) with respect to the counting measure on $\mathbb{N}$. For any $(a,b)\in \{0,\ldots, |S'_n|\}\times \{0,1\}$, we have 
\begin{align*}
f_1(a,b) &= \binom{|S_n'|}{a} 2^{-|S_n'|-1} \left[\frac{d-K_d}{d} + \frac{K_d}{d} (1+\lambda)^{a+b}(1-\lambda)^{|S_n'|+1 - a - b}\right]\\
&= f_0(a,b) + \binom{|S_n'|}{a} 2^{-|S_n'|-1}  \frac{K_d}{d} (1+\lambda)^{a}(1-\lambda)^{|S_n'| - a}  2\lambda (b-1/2)\enspace . 
\end{align*}
Define $u(b) =  2\lambda (b-1/2)K_d/d$ and $\tilde f$ as the density of $\mathcal B(|S_n'|, (1+\lambda )/2)\otimes\mathcal B(1, 1/2)$. 
It follows that $f_1(a,b) = f_0(a,b)+ u(b)\tilde{f}(a,b)$. 
Let us upper bound the $\chi^2$ divergence between these two densities.
\begin{align*}
\chi^2\left(f_1, f_0 | S_n'\right) &= \sum_{a=0}^{|S_n'| }\sum_{b=0}^1 \frac{f_1^2(a,b)}{f_0(a,b)} - 1= \sum_{a=0}^{|S_n'| }\sum_{b=0}^1 \frac{f_0^2(a,b)  + 2f_0(a,b)  u(b) \tilde f(a,b) + u^2(b) \tilde f^2(a,b) }{f_0(a,b) } - 1 \\ & = \sum_{a=0}^{|S_n'| }\sum_{b=0}^1 \frac{2f_0(a,b) \tilde f(a,b)  u(b)+ \tilde f^2(a,b)  u^2(b)}{f_0(a,b) }\enspace . 
\end{align*}
By definition, we have $\sum_{b=0}^1 u(b) = 0$, $\tilde{f}(a,0)= \tilde{f}(a,1)$, and $f_0(a,1)= f_0(a,0)$. Thus, we get
\begin{align*}
\chi^2\left(f_1, f_0 | S_n'\right) &= \sum_{a=0}^{|S_n'| }\sum_{b=0}^1 \frac{\tilde f^2(a,b) u^2(a,b)}{f_0(a,b)} = \lambda^2 \frac{K_d^2}{d^2} \sum_{a=0}^{|S_n'| } \frac{2\tilde f^2(a,0)}{f_0(a,0)}.
\end{align*}
The function  $g:\lambda \rightarrow (1+\lambda)^{-a}(1-\lambda)^{-|S_n'| + a}$ is convex for any $a \in [0,S_n]$. Hence, we derive that 
\begin{align*}
\frac{\tilde f^2(a,0)}{f_0(a,0)} =  \frac{\tilde f(a,0)}{\frac{d-K_d}{d} (1+\lambda)^{-a}(1-\lambda)^{-|S_n'| + a}+ \frac{K_d}{d}} \leq \frac{\tilde f(a,0) }{(1+\frac{d-K_d}{d}\lambda)^{-a}(1-\frac{d-K_d}{d}\lambda)^{-|S_n'| + a}}\enspace . 
\end{align*}
Then, we come back to the $\chi^2$ divergence and we work out the density to obtain
\begin{align*}
\chi^2\left(f_1, f_0 | S_n'\right) &\leq \lambda^2 \frac{2K_d^2}{d^2} \sum_{a=0}^{|S_n'| } \tilde f(a,0) \left(1+\frac{d-K_d}{d}\lambda\right)^{a}\left(1-\frac{d-K_d}{d}\lambda\right)^{|S_n'| - a}\\
&\quad \quad = \lambda^2 \frac{K_d^2}{d^2} \left[\mathbb E_{B'\sim \mathcal B(1,1/2)}\left[\left(1-\frac{d-K_d}{d} \lambda\right)(1- \lambda) \left[\frac{(1+\frac{d-K_d}{d} \lambda)(1+ \lambda)}{(1-\frac{d-K_d}{d} \lambda)(1- \lambda)}\right]^{B'}\right]\right]^{|S_n'|}.
\end{align*}
Define $v = \log\left(\frac{(1+\frac{d-K_d}{d} \lambda)(1+ \lambda)}{(1-\frac{d-K_d}{d} \lambda)(1- \lambda)}\right)$ and 
$t= (1+\frac{d-K_d}{d} \lambda)(1+ \lambda)(1-\frac{d-K_d}{d} \lambda)(1- \lambda)$.  We obtain
\begin{align*}
\chi^2\left(f_1, f_0 | S_n'\right) &\leq 
\lambda^2 \frac{K_d^2}{d^2}  \left[\mathbb E_{B'\sim \mathcal B(1,1/2)}\left[t^{1/2} e^{v(B'-1/2)}\right]\right]^{|S_n'|}\enspace .
\end{align*}
Then,  Hoeffding's inequality ensures that $\mathbb E_{B'\sim \mathcal B(1,1/2)}[e^{v(B'-1/2)}]\leq e^{v^2/2}$. Moreover, we have $|t|\leq 1$. This yields 
\begin{align*}
\chi^2\left(f_1, f_0 | S_n'\right) &\leq 
\lambda^2 \frac{K_d^2}{d^2} e^{v^2|S_n'|/2}\enspace . 
\end{align*}
Since 
\[
v = \log\left(1+ 2\frac{\lambda}{1-\lambda}\right)+ \log\left(1+ 2(1-K_d/d)\frac{\lambda}{1-(1-K_d/d)\lambda}\right)\leq \frac{4\lambda}{1-\lambda}\leq 8\lambda\enspace , 
\]
for $\lambda \leq 1/2$, we get 
\begin{align*}
\chi^2\left(f_1, f_0 | S_n'\right) &\leq 
\lambda^2 \frac{K_d^2}{d^2} e^{32\lambda^2|S_n'|}.
\end{align*}
Now going to the entire vector of sufficient statistics, we obtain by tensorization that 
\begin{align}\label{eq:chi_2_f1_f0}
\chi^2\left(f_1^{\otimes d}, f_0^{\otimes d} | S_n'\right) &= (1+\chi^2\left(f_1, f_0| S_n'\right))^d -1 
\leq 
\left[1+\lambda^2 \frac{K_d^2}{d^2} e^{32\lambda^2|S_n'|}\right]^d- 1 \enspace . 
\end{align}
Moreover, we have already established in {\bf Case 1} that 
\begin{align*}
\inf_{f~\mathrm{measurable}}\mathbb E\left(|f - x^{\star}|^2\right)  \geq \frac{1}{4}\left(\frac{1}{2} - \frac{1}{2} \mathbb E\left[\mathrm{TV}\left(f_1^{\otimes d}, f_0^{\otimes d} | S_n'\right)\right] \right)\frac{K_n}{n}\enspace . 
\end{align*}
Since $|S_n'| \sim \mathcal B(n-1, K_n/n)$. We have $\mathbb{P}[|S'_n|\geq 32K_n]\leq \exp\left[- 16K_n\right]$. Hence, we otbain
\begin{align*}
\inf_{f~\mathrm{measurable}}\mathbb E\left(|f - x^{\star}|^2\right)  \geq \frac{1}{4}\left(\frac{1}{2} - \frac{1}{2} \mathbb E\left[\mathrm{TV}\left(f_1^{\otimes d}, f_0^{\otimes d} | S_n'\right) \mathbf 1\{|S_n'| \leq 32 K_n\}\right]  - 2\exp( - 16K_n)\right)\frac{K_n}{n}\enspace .
\end{align*}
Since $\mathrm{TV}\left(f_1^{\otimes d}, f_0^{\otimes d} | S_n'\right) \leq \sqrt{\chi^2\left(f_1^{\otimes d}, f_0^{\otimes d} | S_n'\right)}$, together with~\eqref{eq:chi_2_f1_f0}, this leads us to 
\begin{align*}
\inf_{f~\mathrm{measurable}}\mathbb E\left(|f - x^{\star}|^2\right)  \geq \frac{1}{4}\left(\frac{1}{2} - \frac{1}{2} \mathbb E\left[\sqrt{\left[1+\lambda^2 \frac{K_d^2}{d^2} e^{2^{10}\lambda^2K_n}\right]^d - 1}\right] - 2\exp( - 16K_n)\right)\frac{K_n}{n}\enspace .
\end{align*}
Since we assumeed that  $\lambda \sqrt{K_n}\vee \lambda\frac{K_d}{\sqrt{d}}\leq \bar{c}$ for a small numerical constant $\bar{c}$, we conclude that 
\begin{align*}
\inf_{f~\mathrm{measurable}}\mathbb E\left(|f - x^{\star}|^2\right)  \geq \frac{1}{4}\left(\frac{1}{4} - 2\exp( - 16K_n)\right)\frac{K_n}{n}.
\end{align*}
This concludes the proof as $\mathbb E[x^{\star 2}]=K_n/n$.

\end{document}